\documentclass[12pt]{article}

\usepackage[a4paper,margin=1.02in]{geometry}
\usepackage{amsmath,amssymb,amsthm,mathtools}
\usepackage{enumitem}
\usepackage{microtype}
\usepackage{booktabs}
\usepackage{xcolor}
\usepackage[colorlinks=true,linkcolor=blue!45!black,citecolor=blue!45!black,urlcolor=blue!45!black]{hyperref}

\definecolor{darkblue}{RGB}{0, 0, 100}
\definecolor{darkorange}{RGB}{200, 100, 0}

\setlist{nosep}
\allowdisplaybreaks

\newtheorem{theorem}{Theorem}[section]
\newtheorem{lemma}[theorem]{Lemma}

\newtheorem{corollary}[theorem]{Corollary}

\newtheorem{fact}[theorem]{Fact}
\theoremstyle{definition}
\newtheorem{definition}[theorem]{Definition}
\newtheorem{problem}[theorem]{Problem}
\newtheorem{question}[theorem]{Question}
\theoremstyle{remark}

\newcommand{\eps}{\varepsilon}

\newcommand{\one}{\mathbf 1}
\newcommand{\ei}{\mathbf e}

\newcommand{\ad}{\mathrm{ad}}

\newcommand{\sigore}{\sigma_{\mathrm{Ore}}^*}

\newcommand{\ds}{d^{*}}

\newcommand{\Cvec}{\widetilde C}

\title{Dominant-Degree Conditions for Ramsey--Tur\'an Factors of Non-Directed Cycle Orientations}
\author{Jia Zhou$^1$, Yunshu Gao$^1$\thanks{{Corresponding author. E-mail: gysh2004@gmail.com}}\\
\small $^1$School of Mathematics and Statistics, Ningxia University, Yinchuan 750021, China}
\date{}

\begin{document}
\maketitle

\begin{abstract}
Let $\Cvec$ be a fixed orientation of the cycle $C_\ell$, $\ell\ge3$, which is not directed.  For an oriented graph $D$,  let
 $d_D^*(v):=\max\{d_D^+(v),d_D^-(v)\},$ and let
\[
\sigore(D):=\min\bigl\{d_D^*(x)+d_D^*(y):x\ne y,\ xy,yx\notin A(D)\bigr\},
\]
with $\sigore(D)=\infty$ if the underlying graph of $D$ is complete.
We prove that, for every $\mu>0$, there exist $\gamma>0$ and $n_0$ such that every $n\ge n_0$ with $\ell\mid n$ and every $n$-vertex oriented graph $D$ satisfying
\[
\alpha(D)\le\gamma n \text{ and }
 {\sigore(D)\ge\left(\frac34+\mu\right)n}
\]
contains a $\Cvec$-factor.  {Additionally, for every fixed $s\ge2$ and every fixed real constant $C$, we construct arbitrarily large oriented graphs with $\sigore(D)\ge \frac34n+C$ that contain no $C_{2s}^{\ad}$-factor. More precisely, $C:=\frac34\alpha(D)-2$ for $s=2$ and $C:=\frac14\alpha(D)-\frac32$ for $s\ge3$.} This paper develops a weighted reduction framework adapted to dominant degree condition, proves the absorption lemma via closed-cluster merging with even-walk, and derives almost-perfect tiling structures by virtue of Farkas-lemma-based fractional decomposition.
\end{abstract}

\section{Introduction}
Perfect tiling problems constitute a core topic in extremal graph theory. For a fixed graph or digraph $H$, an \textbf{$H$-factor} is a set of vertex-disjoint $H$-copies covering the entire vertex set of the graph. A central problem in this field is to find the optimal density conditions that guarantee the existence of an $H$-factor. The classical Hajnal--Szemer\'edi theorem determines the tight minimum-degree threshold for perfect $K_r$-tilings. Subsequent studies have extensively enriched the general theory of $H$-factors; {see \cite{AlonYuster,CHWY2024,CorradiHajnal,HajnalSzemeredi,KuhnOsthus}.}

A \textbf{transitive tournament} $TT_r$ is the unique acyclic orientation of the complete graph $K_r$. Several sufficient degree conditions for $TT_r$-factors have been established; {see \cite{BaloghLoMolla,DeBiasioLoMollaTreglown,Treglown,Yuster2003}.}
In contrast, a general \textbf{tournament} $T_r$ is an arbitrary orientation of $K_r$. For any fixed $T_r$, Treglown \cite{Treglown} proved that all sufficiently large $n$-vertex digraphs with minimum semidegree at least $ (1-1/r)n$ admit a $T_r$-factor. This minimum semidegree is tight. Treglown further introduced the \textbf{dominant degree} $d^*_D(v)=\max\{d_D^+(v),d_D^-(v)\}$ for each vertex $v$ and established a sufficient dominant-degree condition for $TT_r$-factors, while conjecturing the tight threshold $d^*_D(v)\ge (1-1/r)n$. This conjecture was recently confirmed by Chang, Wei and Yan \cite{ChangWeiYan2026}, who proved the following stronger Ore-dominant-degree statement.
\begin{theorem} \cite{ChangWeiYan2026}
Let $n,r\in\mathbb{N}$ with $r\mid n$ and $r\ge 2$. Every $n$-vertex digraph $D$ satisfying
\[
d_D^*(x)+d_D^*(y)\ge 2\left(1-\frac1r\right)n-1
\]
for every  $xy\notin A(D)$ contains a $TT_r$-factor.
\end{theorem}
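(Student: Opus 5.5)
This theorem is cited from \cite{ChangWeiYan2026}, and the plan below is one way I would reprove it. It is an exact result with no error term, so I would not use regularity. Instead I would argue by induction on $r$ combined with a maximal-counterexample, or switching, argument in the spirit of the Corr\'adi--Hajnal and Hajnal--Szemer\'edi proofs.

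\textbf{Base case and setup.} For $r=2$ the hypothesis says $d^*(x)+d^*(y)\ge n-1$ for every non-adjacent pair, where adjacency is in the underlying graph. Since $d^*(v)\le d(v)$, the underlying graph satisfies the Ore condition $d(x)+d(y)\ge n-1$, so it has a Hamilton path. With $n$ even, this path gives a perfect matching, and every edge of the matching is a copy of $TT_2$.

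\textbf{Reduction to a partition problem.} For the induction step I would use the following observation. If $v$ has $d^+(v)=d^*(v)$, then $v$ together with a $TT_{r-1}$ inside $N^+(v)$ forms a $TT_r$ with $v$ as source; the case of a sink is symmetric. So it suffices to find a partition $V=S\cup R$ with $|S|=n/r$ and a bijection pairing each $s\in S$ with a set of $r-1$ vertices of $R$ that lie in $N^{\pm}(s)$ on $s$'s dominant side. Those $(r-1)$-sets must also form a $TT_{r-1}$-factor of $D[R]$.

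\textbf{Induction step.} I would proceed as follows.
\begin{enumerate}
\item Take a partial $TT_r$-tiling $\mathcal T$ of maximum size, with a leftover set $U$ of size at least $r$.
\item By the Ore condition and averaging, find two vertices $x,y\in U$ whose dominant neighbourhoods send at least $2(1-\frac1r)n-1$ arcs into the tiles of $\mathcal T$ and into $U$ combined.
\item Show that some tile $T\in\mathcal T$ receives enough of these arcs. Specifically, I want $T$ to receive at least $2r-1$ dominant arcs from $\{x,y\}\cup(U\setminus\{x,y\})$, so that $V(T)\cup\{x,y,\dots\}$ can be re-tiled into two transitive tournaments.
\item For this re-tiling, use that any tournament on $r$ vertices contains $TT_{\lfloor\log_2 r\rfloor+1}$, and more importantly that a vertex dominating $r-1$ vertices of a $TT_r$ on its dominant side creates a new $TT_r$ by swapping out one vertex.
\item Follow the standard Hajnal--Szemer\'edi style accessibility digraph on tiles. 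Move vertices along chains of swaps until the leftover set $U$ can absorb a new tile, which contradicts the maximality of $\mathcal T$.
\end{enumerate}

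\textbf{Main obstacle.} The hardest part is the swapping lemma in step 4. The dominant degree only guarantees arcs in one direction, either out or in, depending on the vertex. A vertex whose dominant side is ``out'' can only act as a source in the new tile, so swaps must preserve transitivity with a prescribed source or sink role.

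I would first try to handle this by classifying each vertex of $U$ as an out-type or in-type vertex. Then I would count arcs separately toward the top and bottom positions of each tile, using that every $TT_r$ has a unique source and a unique sink. The extremal configurations, such as near-complete $r$-partite structures, must be analysed separately. The $-1$ in the bound leaves no slack, so I expect a delicate extremal case analysis there.
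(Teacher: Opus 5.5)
The paper does not prove this statement; it only quotes it from Chang, Wei and Yan, so there is no internal proof to compare against. Judged on its own, your base case $r=2$ is correct: the underlying graph satisfies Ore's condition with bound $n-1$, so it has a Hamilton path and hence a perfect matching. The induction step, however, is not an argument, and the concrete steps you commit to do not work.

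\textbf{The induction hypothesis is never used.} The hypothesis is in fact hereditary. For any $S$ with $|S|=n/r$ and $R=V\setminus S$ we have $d^*_{D[R]}(v)\ge d^*_D(v)-n/r$, so $D[R]$ satisfies the hypothesis for $r-1$ with $|R|=(r-1)n/r$. Induction therefore gives \emph{some} $TT_{r-1}$-factor of $D[R]$. Your reduction, though, needs a factor whose tiles can be matched bijectively to $S$, with each tile inside the dominant neighbourhood of its partner. Induction gives you no control over which factor you receive, and choosing $S$ so that this Hall-type matching exists is essentially the whole problem. Steps 1--5 then drop the reduction entirely.

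\textbf{Step 3 cannot be obtained by counting.} The pair $x,y$ may send up to $2(|U|-1)$ dominant arcs into $U$. So the $(n-|U|)/r$ tiles receive at least $2(1-\frac1r)n-2|U|+1$ arcs, and the average per tile is
\[
\frac{2(r-1)n-2r|U|+r}{n-|U|}<2(r-1)\qquad\text{since } 2|U|>r .
\]
This is compatible with $x$ and $y$ each seeing exactly $r-1$ vertices of every tile, so counting never yields a tile receiving $2r-1$ arcs. Even if such a tile existed, $|V(T)\cup\{x,y\}|=r+2<2r$ for $r\ge3$. Re-tiling would then need $r-2$ further vertices of $U$ forming a suitably attached $TT_{r-2}$. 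Maximality of $\mathcal T$ tells you nothing about $D[U]$ beyond its being $TT_r$-free, and your ``$\dots$'' hides exactly this.

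\textbf{Step 5 is the whole theorem, and you have not adapted its key lemma.} The accessibility-digraph argument is driven by a precise counting lemma on accessible classes, which uses the exact degree bound. Your hypothesis constrains only pairs missing an arc, and it controls only one direction at each vertex. So a leftover vertex can be inserted only as the source or the sink of a tile, never in a middle position. You name this as the main obstacle but give no way around it, and the $-1$ in the threshold leaves no room for any loss. As written, the proposal is a strategy outline, not a proof of the induction step.
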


As a cornerstone of factor theory, Dirac's theorem has motivated extensive research on Hamiltonian cycles and cycle-factor problems in digraphs; see, for example, \cite{book,Kelly2011,KellyKuhnOsthus}. Exploring degree conditions that guarantee $\vec{C}$-factors for arbitrarily oriented cycles is a natural generalization of classical cycle-factor problems. {Czygrinow, Kierstead and Molla \cite{CKM2014} developed a directed Corr\'adi--Hajnal theory for triangle factors under minimum total-degree conditions; in particular, their results address cyclic and transitive triangles.} For odd oriented cycles, Molla and Treglown \cite{MollaTreglown2026} established the asymptotically optimal semidegree threshold for $\vec{C}$-factors.

An \textbf{oriented graph} is a digraph obtained by assigning a direction to each edge of a  graph. The thresholds in the above results can often be lowered when restricted to oriented graphs. In 2025, Lo proved the following.
\begin{theorem} {\cite{Lo2025}}
For all $\varepsilon>0$, there exists $\ell_0=\ell_0(\varepsilon)$ such that for all $\ell\ge \ell_0$ and any oriented cycle $C_\ell$ on $\ell$ vertices, any oriented graph $G$ on $k\ell$ vertices with minimum semidegree $\delta^0(G)\ge (3/8+\varepsilon)k\ell$ contains a $C_\ell$-factor. Moreover, the constant $3/8$ cannot be improved.
\end{theorem}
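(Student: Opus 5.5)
The statement above is Lo's theorem, quoted from \cite{Lo2025}. Here is how I would prove it; I have not checked the argument in detail. The plan is a standard absorption argument with the regularity and blow-up lemmas, using that $\ell$ is large, so an arbitrary orientation of $C_\ell$ looks locally like a long path.

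\textbf{Step 1: structure.} Given $\varepsilon$, fix $1/\ell_0\ll\varepsilon$. Apply the diregularity lemma to $G$ and pass to the reduced oriented graph $R$. Standard arguments show $R$ keeps minimum semidegree at least $(3/8+\varepsilon/2)|R|$.

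At this threshold, the Keevash--K\"uhn--Osthus and Kelly machinery gives two properties of $R$.
\begin{itemize}
\item $R$ is \emph{robustly connected}: any two clusters are joined by short paths of every orientation pattern we need.
\item $R$ is \emph{aperiodic}: it contains closed walks whose "net direction" covers all residues. This is what lets a cycle with prescribed numbers of forward and backward arcs wrap around $R$.
\end{itemize}

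\textbf{Step 2: almost-perfect tiling.} Choose a large integer $t$. Write the fixed cycle $C_\ell$, viewed as a cyclic $\pm$-word, as a concatenation of segments of length about $t$, plus a bounded number of "turn" vertices.

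Each segment should be mapped homomorphically into a bounded-size, balanced, robust piece of $R$. The natural pieces are a regular pair carrying arcs in both directions, or a short oriented cycle of clusters. Such pieces can be covered almost perfectly by a fractional argument: use the semidegree condition and an LP/Farkas-type duality to find a fractional tiling of $R$ by these pieces.

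Then apply the blow-up lemma inside each piece. This embeds vertex-disjoint copies of $C_\ell$ covering all but $\varepsilon' n$ vertices. Because $\ell$ is large, the turns cost only $O(\ell/t)$ vertices per copy and can be routed through the connecting paths from Step 1.

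\textbf{Step 3: absorption.} First build an absorbing set $A$ of size $\ll \varepsilon n$. For any small set $W$ with $|A\cup W|$ divisible by $\ell$, the set $A\cup W$ should carry a $C_\ell$-factor. The gadgets are: for each vertex $v$, many $\ell$-sets $S$ such that both $G[S]$ and $G[S\cup\{v\}]$ minus one suitably chosen $\ell$-set support copies of $C_\ell$. These exist because
\begin{itemize}
\item $v$ has $(3/8+\varepsilon)n$ in- and out-neighbours, so $v$ can be inserted into a long oriented path at a position matching its local $\pm$ pattern;
\item robust connectivity supplies closing paths of the exact remaining length.
\end{itemize}
A random selection (Chernoff bounds) then yields $A$. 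Finally, run Step 2 on $G-A$ and absorb the leftover.

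\textbf{Tightness.} I would adapt the Keevash--K\"uhn--Osthus extremal oriented graph for Hamilton cycles at $3n/8$. It consists of four parts with sizes roughly $n/4,n/4,n/8+\cdot,3n/8$. One class is too large to be covered: every vertex in it has all its neighbours in a set of smaller total size. I expect that, counting only the parts each copy of $C_\ell$ can use, this blocks any $C_\ell$-factor for every orientation, including non-directed ones. Checking this count is the one part of the tightness argument I have not confirmed.

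\textbf{Main obstacle.} I expect the hardest step to be the "wrapping" in Steps 2--3. A cycle of \emph{arbitrary} orientation, possibly with many direction changes, must be embedded with exactly $\ell$ vertices, while each cluster is used in a balanced way. I would handle this first by grouping the $\pm$-word of $C_\ell$ into alternating blocks and directed runs of length at least $t$, then using aperiodicity of $R$ to fix the residual length modulo the structure. If that fails at exactly $3/8$, I would fall back on a stability split: in the non-extremal case the argument goes through, and the near-extremal configurations are treated by hand.
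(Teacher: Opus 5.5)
This paper only quotes Lo's theorem and proves nothing about it, so your sketch can only be judged on its own terms. As it stands it has a structural gap in the upper bound and a wrong plan for the lower bound.

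\textbf{Upper bound.} Since $k\ge1$ is arbitrary, the case $k=1$ of the statement is exactly Kelly's theorem that $\delta^0\ge(3/8+\varepsilon)n$ forces every orientation of a Hamilton cycle. More generally, $\ell$ may be linear in $n$.

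\emph{The large-$\ell$ regime.} Here your Step~3 is void. Once $\ell>(\eta+\varepsilon')n$, an absorber of size $\eta n$ plus a leftover of size $\varepsilon' n$ cannot carry even one copy of $C_\ell$. So Step~2 would have to be perfect, i.e.\ you would be redoing Kelly's proof.

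\emph{The absorber when $\ell\ll\varepsilon n$.} The absorber is still not actually constructed. As written, the gadget is inconsistent: $|S|=\ell$, and an $(\ell+1)$-set minus an $\ell$-set is a single vertex. Read charitably, it says that $v$ can replace some $x$ in a copy of $C_\ell$, which is a reachability statement, not an absorbing one. Moreover, replacing $x$ by $v$ in a fixed copy needs common in- and out-neighbours of $x$ and $v$. Both can be empty even in a regular tournament: take a regular tournament on $S\cup T$ with $|T|=|S|+1$ and add $x,y$ with $x\to y$ and $x\to S\to y\to T\to x$.

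\emph{The missing idea.} The hypothesis $\ell\ge\ell_0(\varepsilon)$ lets you use Kelly's theorem as a black box. A random $\ell$-set fails to give a fixed vertex $(3/8+\varepsilon/2)\ell$ in- and out-neighbours only with probability $e^{-\Omega(\varepsilon^2\ell)}$. Split $V(G)$ at random in rounds, into groups of decreasing size, so that every union bound runs over a single group. This yields a partition into $k$ sets of size $\ell$, each inducing minimum semidegree at least $(3/8+\varepsilon/2)\ell$. Kelly's theorem then gives a copy of $C_\ell$ in every part, uniformly in $k$, with no regularity, blow-up or absorption.

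\textbf{Lower bound.} No single example can block $C_\ell$-factors for every orientation. By Chen's result quoted in the introduction, $\delta^0\ge(1/3+o(1))n$ already forces a $C_{2s}^{\mathrm{ad}}$-factor. Hence, for fixed $s$, any oriented graph with $\delta^0\approx 3n/8$ contains a $C_{2s}^{\mathrm{ad}}$-factor once $n$ is large. The barrier you describe cannot occur either: every vertex has at least $3n/4$ neighbours, so no class has all its neighbours in a disjoint smaller set.

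``Cannot be improved'' needs only one orientation, the consistently directed cycle. The Keevash--K\"uhn--Osthus example is built as follows.
\begin{itemize}
\item There are four parts $A,B,C,D$, each of size about $n/4$, with $|B|=|D|+1$.
\item $A$ and $C$ span regular tournaments, and $B\cup D$ spans an almost regular bipartite tournament.
\item All arcs from $A$ to $B$, from $B$ to $C$, from $C$ to $D$ and from $D$ to $A$ are present, and there are no other arcs.
\end{itemize}
It has $\delta^0=3n/8-O(1)$. Every directed path between two vertices of $B$ passes through $D$, so every directed cycle has at least as many vertices in $D$ as in $B$. Hence the graph has no $1$-factor, and so no directed $C_\ell$-factor for any $\ell$. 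A backward arc, such as $b\to c\leftarrow b'$ with $c\in C$, escapes this count. That is exactly why the example says nothing about non-directed orientations.
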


 A typical example of a non-directed cycle orientation is the \textbf{anti-directed cycle} $C_{2s}^{\mathrm{ad}}$, an even cycle with alternating edge directions. Chen \cite{ChenAntiDirected} showed that a minimum semidegree of at least $\bigl(\tfrac13+o(1)\bigr)n$ guarantees a $C_{2s}^{\mathrm{ad}}$-factor in oriented graphs.

Ramsey--Tur\'an theory offers a powerful framework for $H$-factor problems under independence number constraints.
 The \textbf{independence number} $\alpha(G)$ of a graph $G$ is defined as the maximum cardinality of an edgeless vertex subset.  {For a digraph $D$, $\alpha(D)$ always denotes the independence number of its underlying undirected graph.} Initiated by Erd\H{o}s and S\'os \cite{ErdosSos}, Ramsey--Tur\'an type problems have been widely studied \cite{CHY2025,ErdosHajnalSosSzemeredi,HMWY2024,HNY2026,SimonovitsSos}. Balogh, Molla, and Sharifzadeh \cite{BaloghMollaSharifzadeh} first proposed the Ramsey--Tur\'an variant of the Hajnal--Szemer\'edi theorem, which asks for degree thresholds for perfect tilings under bounded independence numbers, and resolved the case $r=3$. Knierim and Su \cite{KnierimSu} settled the problem for all $r\ge 4$, and Chen et al. \cite{CHWY2024} extended the result to general $H$-factor theory. These advances further motivate Ramsey--Tur\'an type research on digraph factors.

Chen et al. \cite{ChenKouMaZhang2026} obtained an asymptotically tight semidegree condition for $TT_3$-factors in sparse independence oriented graphs. Subsequently, Wang, Wang and Yan \cite{WWY2026} determined the optimal threshold for $TT_3$-factors and posed the following question.

\begin{question}\label{question:Wang}\cite{WWY2026}
Let $D$ be an $n$-vertex digraph with $\alpha(D)=o(n)$, and let $H$ be a digraph satisfying $|V(H)|\mid n$.
What minimum total-degree condition on $D$ guarantees that $D$ contains an $H$-factor?
\end{question}


In this paper, we study Ramsey--Tur\'an type problems for {arbitrary non-consistently-directed cycle orientations} using the genuine Ore parameter $\sigore(D)$ instead of the minimum semidegree condition for oriented graphs.
  Our main result is presented as follows. 

\begin{theorem}\label{thm:main}
Let $\Cvec$ be an orientation of $C_\ell$, $\ell\ge3$, which is not consistently directed.  For every $\mu>0$ there exist constants $\gamma>0$ and $n_0\in\mathbb N$ such that the following holds.  If $n\ge n_0$, $\ell\mid n$, and $D$ is an $n$-vertex oriented graph satisfying
\[
 \alpha(D)\le\gamma n
 \qquad\text{and}\qquad
 \sigore(D)\ge\left(\frac34+\mu\right)n,
\]
then $D$ contains a $\Cvec$-factor.
\end{theorem}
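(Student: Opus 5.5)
We follow the absorbing method in its Ramsey--Tur\'an form, with the constants ordered $1/n_0\ll\gamma\ll\beta\ll\eta\ll\eps\ll\mu,1/\ell$. The argument has three parts.

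1. An absorbing set $A\subseteq V(D)$ with $|A|\le\beta n$ and $\ell\mid|A|$, such that $D[A\cup W]$ has a $\Cvec$-factor for every $W\subseteq V(D)\setminus A$ with $|W|\le\eta n$ and $\ell\mid|W|$.
2. An almost-perfect $\Cvec$-tiling of $D-A$ leaving at most $\eta n$ vertices uncovered.
3. Absorbing the leftover set $W$ into $A$ using the property from part 1.

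\textbf{Absorbing.} By the lattice-based absorbing method (Lo--Markstr\"om style), it suffices to show two things:
\begin{itemize}
\item every vertex lies in $\Omega(n^{\ell})$ copies of $\Cvec$;
\item for any two vertices $u,v$ there are $\Omega(n^{t\ell-1})$ sets $S$ such that both $D[S\cup\{u\}]$ and $D[S\cup\{v\}]$ have $\Cvec$-factors, where $t$ is a constant ("$u,v$ are reachable").
\end{itemize}

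Here $\alpha(D)\le\gamma n$ is the key tool. Any set of $\ge\gamma n$ vertices spans an edge. Applied repeatedly inside large common neighbourhoods, this gives many oriented edges, and hence many paths of every orientation pattern, inside any linear-size set.

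Since $\Cvec$ is not consistently directed, it has a source $x$ and a sink. Build $\Cvec$ as follows:
\begin{itemize}
\item take a vertex $v$ with $d^+(v)\ge d^*(v)$ or $d^-(v)\ge d^*(v)$;
\item embed an out-star (or in-star) at $v$;
\item close the rest of the cycle by an oriented path of the required pattern inside a linear-size set.
\end{itemize}
This closing uses the fact that $\alpha$ small forces any linear set to contain $\Omega(n^k)$ oriented paths of length $k$ with any prescribed orientation. That follows by greedily taking out-/in-neighbourhoods inside the set, noting that a vertex set with no vertex of linear out-degree has linear independent subsets.

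For reachability, define a partition of $V(D)$ into closed classes under the relation "$u,v$ have $\Omega(n)$ common dominant neighbours". The Ore condition $\sigore(D)\ge(3/4+\mu)n$ is needed here. For $u,v$ nonadjacent,
\[
d^*(u)+d^*(v)>\tfrac34 n,
\]
and a short pigeonhole over the four combinations $N^\pm(u)\cap N^\pm(v)$ shows there are few classes. Classes are then merged via even walks: $u\sim w\sim v$ yields $u,v$ reachable with larger $t$, and a pair of nonadjacent vertices always shares a linear common neighbourhood of some type. We then show the closed classes actually merge into one, using an even closed walk built from an edge inside a class.

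\textbf{Almost-perfect tiling.} Apply the degree form of the diregularity lemma to $D-A$ to get a reduced oriented graph $R$ on clusters. Weight edges by density so that $R$ inherits $\sigore(R)\ge(3/4+\mu/2)|R|$ in a weighted sense. The crucial point is that, because $\alpha(D)=o(n)$, each cluster $V_i$ itself contains $\Cvec$-tilings. More precisely, a regular pair $(V_i,V_j)$ with an edge-direction of positive density, together with the small independence number inside clusters, supports tiling by copies of $\Cvec$ using vertices in ratio $a:b$ for a range of ratios. Non-directedness of $\Cvec$ lets a cycle straddle a pair with one side containing only sink/source-type vertices.

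We seek a fractional tiling of $R$ by such "pair gadgets" and single clusters that covers all but $\eps|R|$ weight. By Farkas' lemma, failure gives a weight vector $y$ on clusters with $\sum y_i>0$ yet nonpositive on every gadget. The Ore condition then yields a contradiction: take a pair of nonadjacent clusters of maximal weight and count dominant-degree contributions, at which point the $3/4$ threshold appears. Converting the fractional tiling to an actual tiling via regularity (blow-up/greedy within $\eps$-regular pairs) covers all but $\eta n$ vertices.

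\textbf{Main obstacle.} The main difficulty is the Farkas step, namely verifying that $3/4$ suffices for the fractional cover. The dominant degree may point out at one vertex and in at another, so edge directions between clusters are uncontrolled. I would first handle it by classifying each cluster as out-dominant or in-dominant, and then show that each type-pair admits a gadget of appropriate ratio.
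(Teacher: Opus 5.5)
There is a genuine gap in your absorbing step. It relies on counting statements that are false when only $\alpha(D)\le\gamma n$ is known:
\begin{enumerate}
\item every vertex lies in $\Omega(n^{\ell-1})$ copies of $\Cvec$;
\item reachable pairs have $\Omega(n^{t\ell-1})$ connecting sets;
\item every linear-size set spans $\Omega(n^k)$ oriented paths of each pattern.
\end{enumerate}
For a counterexample, take $V(D)=\mathcal A\cup\mathcal B$ with $|\mathcal A|=|\mathcal B|=n/2$, all arcs directed from $\mathcal A$ to $\mathcal B$, and inside each side vertex-disjoint tournaments of order about $2/\gamma$. Every independent set lies in one side and meets each small tournament at most once, so $\alpha(D)\le\gamma n$. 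Nonadjacent pairs lie in one side and have $\ds\ge n/2$ each, so $\sigore(D)\ge n$. But for odd $\ell$ every copy of $\Cvec$ uses one of the $O(n/\gamma)$ arcs inside a side. Hence a vertex lies in only $O(n^{\ell-2}/\gamma)$ copies, and a linear subset of $\mathcal A$ spans only $O(n)$ oriented paths of any fixed length.

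Your justification fails for the same reason: $\alpha\le\gamma n$ only forces a vertex of degree about $|S|/(\gamma n)$ in $S$, which is a constant. What small independence number gives is existence, not counts. Since $\chi(UG(D[S]))\ge|S|/\alpha(D)$ is a large constant, a fixed oriented tree such as $\Cvec$ minus a source or a sink embeds in $S\setminus W$ for every small $W$. This is why the paper uses robust reachability (connectors avoiding any prescribed $W$ with $|W|\le\beta n$, Definition~\ref{def:reach}) and an absorbing lemma that needs only this (Lemma~\ref{lem:absorber}).

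Your reachability relation also breaks across dominant types. For $u\in P^+$ and $v\in P^-$, a path in $N^+(u)\cap N^-(v)$ completes to $\Cvec$ with $u$ in a source position and $v$ in a sink position. In general $\Cvec$ minus a source and $\Cvec$ minus a sink are different paths, so this gives no common connector. The paper proceeds differently:
\begin{enumerate}
\item it covers atypical vertices greedily (Lemma~\ref{lem:pretil});
\item it gets closedness inside each cluster, since clusters refine $(P^+,P^-)$ and typical vertices of $V_i$ share a linear dominant neighbourhood in $V_{f(i)}$;
\item it merges clusters along even walks via the transferral $\mathbf e_i-\mathbf e_h$, in $UG(R)$, which is connected by Lemma~\ref{lem:connected} (via Lemma~\ref{lem:mass});
\item in the bipartite case, it uses the Ore condition to get $(\frac38+\frac{2\mu}{5})n\le|\mathcal A|,|\mathcal B|\le(\frac58-\frac{2\mu}{5})n$;
\item it then builds a robust profile $2\mathbf e_{\mathcal A}+(\ell-2)\mathbf e_{\mathcal B}$ from an arc $u\to v$ between high-degree same-type vertices of $\mathcal A$, plus a path in their common dominant neighbourhood in $\mathcal B$, which has size at least $n/8$.
\end{enumerate}
Your ``even closed walk built from an edge inside a class'' cannot replace the last two steps, because such edges are sparse and invisible in the reduced graph. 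This is exactly where $3/4$ is used, and it is missing from your sketch.

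In the almost-tiling step, single clusters are not gadgets. Regularity gives no control over $D[V_i]$, and small independence number forces no short cycle: there are graphs with $\alpha=o(n)$ and girth larger than $\ell$. If the sides of the example above carry such graphs, then $P^+=\mathcal A$ and $P^-=\mathcal B$, so every cluster lies in one side and contains no odd $\Cvec$. (If single clusters were gadgets, the fractional problem would be trivial and Farkas would be unnecessary.) So everything rests on the Farkas step, which you leave open.

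The missing observation is that the arc direction between clusters is irrelevant. For a regular pair in either direction, both $\mathbf e_i+(\ell-1)\mathbf e_j$ and $(\ell-1)\mathbf e_i+\mathbf e_j$ are robust: place a source of $\Cvec$ on a vertex with large out-neighbourhood, or a sink on one with large in-neighbourhood, and embed the remaining tree. So you only need $\mathbf 1_{k'}$ as a nonnegative combination of these profiles over the edges of $UG(R_2)$. A Farkas certificate $\mathbf y$ has independent negative support, and a layer-cake argument reduces infeasibility to an independent set $I$ of clusters with $|I|>2|N(I)|$. The Ore condition excludes this. Two nonadjacent vertices in $X=\bigcup_{i\in I}V'_i$ have dominant degree at most $|Y|+2(d+2\eps)n_2$, where $Y=\bigcup_{j\in N(I)}V'_j$. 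This forces $|Y|>n_2/3$, and then $|X|>2|Y|$ gives $|X|+|Y|>n_2$, a contradiction.
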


{The coefficient $3/4$ in Theorem~\ref{thm:main} is asymptotically best possible, Theorem~\ref{thm:no-additive-intro} shows that already for anti-directed cycles no smaller coefficient is possible, and at coefficient $3/4$ no fixed additive constant suffices.}

\begin{theorem}\label{thm:no-additive-intro}
For every fixed integer $s\ge2$ and every real constant $C$, there are arbitrarily large multiples $n$ of $2s$ and $n$-vertex oriented graphs $D$ such that
 $\sigore(D)\ge\frac34n+C,$
but $D$ has no $C_{2s}^{\ad}$-factor.  More precisely, the constructions satisfy
\[
\sigore(D)\ge
 \begin{cases}
 \frac34n+\frac34\alpha(D)-2, & s=2,\\[1mm]
 \frac34n+\frac14\alpha(D)-\frac32, & s\ge3.
 \end{cases}
\]
\end{theorem}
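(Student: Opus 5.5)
The plan is to use an explicit "independent set plus orientation barrier" construction. Because $\sigore$ only constrains non-adjacent pairs, I will make the underlying graph complete except on one independent set $A$ of size $a$, which I will take to be linear in $n$ (the construction must achieve whatever $\alpha(D)=a$ the stated bounds require). Then $\sigore(D)=\min_{x\ne y\in A}(d^*(x)+d^*(y))$, and the only quantity to control is $d^*$ on $A$.

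The structure comes from the anti-directed cycle: every vertex of a copy of $C_{2s}^{\ad}$ is either a source (two out-arcs) or a sink (two in-arcs) of the copy, sources and sinks alternate, and there are exactly $s$ of each. I split $V=A\cup B_1\cup B_2$ and orient all arcs $A\to B_1$ and $B_2\to A$. I choose $|B_1|$ large, so that for $x\in A$ we get $d^*(x)=d^+(x)\ge |B_1|$. Then $\sigore(D)\ge 2|B_1|$, and I choose the sizes so that $2|B_1|\ge \frac34 n+C'$.

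The orientation inside $B_1$, inside $B_2$ and between $B_1$ and $B_2$ is chosen to create a counting obstruction, in the spirit of the $3/8$-type extremal examples for oriented graphs. For instance, I would make $B_1$ a transitive tournament, or take all arcs $B_1\to B_2$, so that the roles of $B_1$-vertices in any copy are forced. I then analyse a copy $Q$ of $C_{2s}^{\ad}$ by counting its vertices in each part.
\begin{enumerate}
\item If $x\in A\cap V(Q)$ is a source of $Q$, both of its $Q$-neighbours lie in $B_1$; if it is a sink, both lie in $B_2$. Since $A$ is independent, $Q$ contains at most $s$ vertices of $A$.
\item The internal orientation of $B$ should ensure that the sinks in $B_1$ cannot simultaneously serve other $A$-vertices efficiently. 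The aim is an inequality of the form $|V(Q)\cap A|\le \lambda\,|V(Q)\cap B_2| + \kappa$ for all copies $Q$, or some similar linear inequality with $\lambda$ depending on $s$.
\end{enumerate}
Summing the inequality over a putative factor gives $a\le \lambda|B_2|+\kappa n/(2s)$, and I pick the part sizes to violate this. I expect the $s=2$ and $s\ge3$ cases to give different optimal $\lambda$. In $C_4^{\ad}$ two $A$-vertices force a common out- or in-pair, whereas longer cycles have more flexibility. This should explain the different coefficients $\frac34\alpha(D)$ and $\frac14\alpha(D)$ in the stated bounds.

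The final step is optimisation. I express $\sigore(D)$ exactly in terms of $a$, $|B_1|$ and $|B_2|$, plus a small rounding term: the $-2$ and $-\frac32$ come from integrality and from taking $n$ to be a multiple of $2s$. I then choose sizes at the boundary of the counting inequality and read off $\sigore(D)\ge\frac34n+c_s\alpha(D)-O(1)$. Letting $a=\alpha(D)$ grow linearly with $n$ beats any fixed constant $C$.

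The main obstacle is step 2: finding an internal orientation of $B$ for which the linear inequality holds for every embedding of $C_{2s}^{\ad}$. I would first try $B_1$ transitive with all arcs $B_1\to B_2$, and verify the inequality by a case analysis on the source/sink pattern of $Q$. If that fails for $s\ge3$, I would add a small extra set to absorb parity issues.
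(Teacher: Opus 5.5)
\textbf{There is a genuine gap: the proposal gives no construction, and the counting mechanism you propose cannot reach the $s=2$ bound.}

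First, a general upper bound. In any oriented graph, let $A$ be an independent set of size $a\ge2$. Every $x\in A$ has $\ds_D(x)\le n-a$, so $\sigore(D)\le 2(n-a)$. Combine this with the $s=2$ target $\sigore(D)\ge\frac34n+\frac34\alpha(D)-2\ge\frac34n+\frac34a-2$. This forces $a\le(5n+8)/11$, which is less than $n/2$ once $n>16$.

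Second, your inequality cannot bite. You put all arcs $A\to B_1$, so any $s$ vertices of $A$ and any $s$ vertices of $B_1$ span a copy of $C_{2s}^{\ad}$ (the $A$-vertices as sources, the $B_1$-vertices as sinks), whatever orientation you choose inside $B$. Hence every inequality $|V(Q)\cap A|\le\lambda|V(Q)\cap B_2|+\kappa$ that holds for all copies has $\kappa\ge s$. With $\lambda\ge0$, the right-hand side of your summed bound $a\le\lambda|B_2|+\kappa n/(2s)$ is then at least $n/2$. It can only be violated by a space barrier $a>n/2$, which the first point rules out.

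Taking $\alpha(D)$ linear in $n$ makes the target harder, not easier, because $\alpha(D)$ appears on the right-hand side. For $s\ge3$ a degenerate version of your idea does work: take $|A|=n/2+1$, $B_2=\emptyset$ and $A\to B_1$. This gives $\sigore(D)=n-2\ge\frac34n+\frac14\alpha(D)-\frac32$ for $n\ge6$, and there is no factor by counting. But that is just the space barrier, and it fails for $s=2$.

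The missing idea is a \emph{local} obstruction with $\alpha(D)$ bounded. The paper proceeds as follows.
\begin{itemize}
\item It takes a vertex $z$ with no in-neighbours whose out-neighbourhood is a fixed gadget $Q$ on $r$ vertices. It sends all arcs $Q\to R$ and leaves $z$ non-adjacent to a large tournament $R$ on $m=n-r-1$ vertices.
\item In $R$ every vertex has $\ds\ge\lfloor(3m-2)/4\rfloor$: $R$ consists of two near-regular tournaments on the halves, with all arcs from the first half to the second.
\item Since $d^-_D(z)=0$, $z$ would be a source of any $C_{2s}^{\ad}$ through it, so its two sinks lie in $Q$. Their other source-neighbours must also lie in $Q$, because there are no arcs from $R$ to $Q$.
\item This is impossible if $\Delta^+(Q)\le1$ (for $s=2$), or if the underlying graph of $Q$ has matching number $1$ (for $s\ge3$). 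So $z$ lies in no copy at all.
\item The non-adjacent pairs are of two kinds. Pairs $(z,v)$ with $v\in R$ have $\ds$-sum at least $r+\lfloor(3m-2)/4\rfloor\ge\frac34n+\frac r4-2$. Pairs inside $Q$ have sum at least $2m$.
\end{itemize}
So the $\frac34$ comes from the host tournament, not from the barrier. The surplus $\frac r4$ is tied to $\alpha(D)=\alpha(Q)=a$ by the choice of gadget: $a$ disjoint directed triangles ($r=3a$) for $s=2$, or one directed triangle plus $a-1$ isolated vertices ($r=a+2$) for $s\ge3$. The constant $a$ is chosen large in terms of $C$.
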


Our proof proceeds in four main stages and incorporates several key technical contributions. First, we construct a weighted reduced digraph adapted to the Ore-type dominating-degree condition $\sigma_{\text{ore}}(D)$, and establish a $\vec{C}$-tiling $\mathcal F_E$ covering all bad vertices (see Lemma \ref{lem:pretil}). Second, we merge closed clusters along even walks in the underlying undirected graph of the reduced digraph to construct a valid absorbing set (see Lemma \ref{lem:evenmerge} and Theorem \ref{cor:oreabsorber}). Third, we derive almost-perfect tiling structures via a Farkas-lemma-based fractional decomposition(see Theorem \ref{lem:almost}). Finally, we complete the proof by using absorption property of the constructed absorbing set.

\paragraph{Organisation.} Section 2 introduces the notation used throughout the paper and the required diregularity lemma and existing lattice-based absorption lemmas.  Based on our proof strategy,
 Section 3 thus presents the construction of Preliminary  $\vec{C}$-tiling $\mathcal F_E$, a valid absorbing set, almost-perfect tiling, and then complete the proof of Theorem \ref{thm:main}.
  Section 4 provides the lower-bound construction. This paper concludes with a concluding section.

\section{Preliminaries}\label{sec:tools}

\subsection{Basic Notation}
Notation not specified in this section is consistent with that in \cite{book}. Denote $ \boldsymbol{\mathbb N}$ for the set of positive integers. For integers $a\leq b$, let $ \boldsymbol{[a,b]}:=\{i\in\mathbb Z:a\leq i\leq b\}$, $ \boldsymbol{[b]}:=[1,b].$ If $h\in\mathbb N$, then $ \boldsymbol{h\mathbb N}$ is the set of positive
multiples of $h$. We use the notation $ \boldsymbol{0<a\ll b\ll c}$
to mean that $a$ is chosen sufficiently small in terms of $b$, and then
$b$ sufficiently small in terms of $c$.  For a set $W$ and
an integer $k\geq0$, $ \boldsymbol{\binom{W}{k}}$ denotes the family of all $k$-subsets
of $W$.    For a family $\mathcal{F}$ of sets, $ \boldsymbol{V(\mathcal{F})}$ denotes the union of the vertex sets of all members of $\mathcal{F}$.
  Given sets $A$ and $B$, the symbol $ \boldsymbol{A\mathbin{\cup} B}$ stands for their disjoint union, i.e., $A\cap B=\emptyset$.

All digraphs are
finite and loopless, and no multiple arcs are allowed.      For a digraph $D$, we
write $V(D)$ and $A(D)$ for its vertex set and arc set, respectively, and
write $ \boldsymbol{|D|}:=|V(D)|$ for its \textbf{order}.   If
$U\subseteq V(D)$, then $ \boldsymbol{D[U]}$ denotes the subdigraph of $D$ induced by
$U$, and $ \boldsymbol{D-U}:=D[V(D)\setminus U]$. And if $F$ is a subdigraph of $D$, we
also write $ \boldsymbol{D-F}:=D-V(F)$.
 For $v\in V(D)$, let
\[
  \boldsymbol{N_D^+(v)}:=\{x\in V(D):vx\in A(D)\},
 \qquad
  \boldsymbol{N_D^-(v)}:=\{x\in V(D):xv\in A(D)\}
\]
be the \textbf{out-neighbourhood} and \textbf{in-neighbourhood} of $v$, respectively, and
let $  \boldsymbol{d_D^+(v)}:=|N_D^+(v)|$, $
  \boldsymbol{d_D^-(v)}:=|N_D^-(v)|.$
For $X\subseteq V(D)$, put
\[
  \boldsymbol{N_D^+(v,X)}:=N_D^+(v)\cap X,
 \qquad
 \boldsymbol{ N_D^-(v,X)}:=N_D^-(v)\cap X,
\]
and use $ \boldsymbol{d_D^+(v,X)}$ and $ \boldsymbol{d_D^-(v,X)}$ for their cardinalities.  The subscript is omitted when the digraph is clear.  The \textbf{minimum
out-degree}, \textbf{minimum in-degree}, \textbf{minimum semidegree} and \textbf{minimum total degree}
of $D$ are, respectively,
\[
  \boldsymbol{\delta^+(D)}:=\min_{v\in V(D)}d_D^+(v),
 \qquad
  \boldsymbol{\delta^-(D)}:=\min_{v\in V(D)}d_D^-(v),
\]
\[
  \boldsymbol{\delta^0(D)}:=\min\{\delta^+(D),\delta^-(D)\},
 \qquad
  \boldsymbol{\delta(D)}:=\min_{v\in V(D)}\bigl(d_D^+(v)+d_D^-(v)\bigr).
\]
 We write $ \boldsymbol{TT_r}$ for the \textbf{transitive tournament} on $r$ vertices; thus
$TT_r$ has an ordering $v_1,\ldots,v_r$ in which $v_iv_j$ is an arc for
every $i<j$. A  graph $G=(V,E)$ is \textbf{connected} if for every pair of vertices $u,v\in V$, there exists a   path from $u$ to $v$ in $G$. A \textbf{connected component} of $G$ is a maximal connected subgraph of $G$. If $C$ is a connected component of $G$, then there are no edges between $V(C)$ and $V(G)\setminus V(C)$.

Throughout the paper, we employ the following conventions for the notation used in this paper.
Fix an orientation $\Cvec$ of the cycle $C_\ell$, where $\ell\ge3$, which is not a directed cycle. And fix the length  $\ell$. Then $\Cvec$  has at least one source and at least one sink; deleting either a source or a sink leaves an oriented path on $\ell-1$ vertices, which  allows the dominant-degree condition to be converted into robust copies of $\Cvec$. Let  $\kappa_\ell:=\frac{(\ell-1)(\ell-2)}2+1$. From now on, given a digraph $D$, define $P^+$ and $P^-$ to be
\begin{equation}\label{pm}
    P^+:=\{v\in V(D):d^+(v)\ge d^-(v)\}
 \qquad \text{and}\qquad  P^-:=V(D)\setminus P^+,
\end{equation}
 {Either of $P^+$ and $P^-$ is allowed to be empty.}



\subsection{Tools}
Our proof relies mainly on the lattice absorption method, so we now present the basic definitions and key auxiliary lemmas needed for the argument.
\begin{definition}\label{def:reach}
 {Let $H$ be a fixed oriented graph on $h$ vertices. Distinct vertices $x,y\in V(D)$ are \textbf{$(H,m,t)$-reachable} if for every set
$W\subseteq V(D)\setminus\{x,y\}$ with $|W|\le m$, there is a set $Q\subseteq V(D)\setminus(W\cup\{x,y\})$ with $|Q|\le th-1$ such that both $D[Q\cup\{x\}]$ and $D[Q\cup\{y\}]$ have $H$-factors. A set $U$ is \textbf{$(H,m,t)$-closed} if every pair of distinct vertices in $U$ is $(H,m,t)$-reachable. In all applications below we take $H=\Cvec$ and hence $h=\ell$.}
\end{definition}

For a partition $\mathcal P=(U_1,\ldots,U_p)$ and $S\subseteq V(D)$, set
\[
 \mathbf i_{\mathcal P}(S):=(|S\cap U_1|,\ldots,|S\cap U_p|).
\]
 {For a fixed oriented graph $H$ on $h$ vertices, a $p$-vector $\mathbf v$ is \textbf{$(H,\rho)$-robust} for $\mathcal P$ if for every $W\subseteq V(D)$ with $|W|\le\rho n$ there is an $H$-copy $F\subseteq D-W$ such that $\mathbf i_{\mathcal P}(V(F))=\mathbf v$.}


\begin{lemma}[Robust profile lemma]\label{lem:transferral}

Fix $\ell\ge3$ and a non-directed orientation $\Cvec$ of $C_\ell$. For every $d>0$, $M\in\mathbb N$, and $0<\varepsilon\le d/8$, there exist constants $\nu,\gamma>0$ and $n_0\in\mathbb N$ such that the following holds. Let $D$ be an $n$-vertex oriented graph, $n\ge n_0$, with a partition $\mathcal P=(V_1,\dots,V_k)$. Suppose that for distinct $i,j\in[k]$ there are $X\subseteq V_i$ and $Y\subseteq V_j$ with $|X|,|Y|\ge n/M$ such that $(X,Y)$ is an $(\varepsilon , d)$-regular directed pair. If $\alpha(D)\le\gamma n$, then both
\[
\mathbf e_i+(\ell-1)\mathbf e_j\quad\text{and}\quad (\ell-1)\mathbf e_i+\mathbf e_j
\]
are $(\Cvec,\nu)$-robust with respect to $\mathcal P$.

\end{lemma}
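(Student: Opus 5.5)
The plan is to build each copy of $\Cvec$ as a single "apex" vertex joined to an oriented path, with the path found inside a linear-size neighbourhood using the chromatic-number consequence of $\alpha(D)\le\gamma n$. Since $\Cvec$ is not consistently directed, it has a source $a$ and a sink $b$ (as noted in the conventions above). Deleting $a$ leaves an oriented path $P_a$ on $\ell-1$ vertices whose two endpoints are the two out-neighbours of $a$ in $\Cvec$. Symmetrically, deleting $b$ leaves a path $P_b$ whose endpoints are the two in-neighbours of $b$.

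Assume the regular pair $(X,Y)$ has its arcs directed from $X$ to $Y$ with density $d$; the reverse direction is handled by swapping the roles of source and sink. Set $\nu:=d/(4M)$ and choose $\gamma$ so that $d/(4M\gamma)>(\ell-2)^2$. We use $\kappa_\ell$ or Burr's bound $(k-1)^2$ for oriented trees on $k$ vertices, whichever is proven or citable. Let $W$ with $|W|\le\nu n$ be given.

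\emph{Vector $\mathbf e_i+(\ell-1)\mathbf e_j$.}
\begin{enumerate}
\item By $(\varepsilon,d)$-regularity, all but at most $\varepsilon|X|$ vertices $x\in X$ satisfy $d^+(x,Y)\ge(d-\varepsilon)|Y|$. Since $|X|\ge n/M>\varepsilon|X|+\nu n$, we can pick such an $x\notin W$.
\item Let $U:=N^+(x,Y)\setminus W$. Then $|U|\ge(d-\varepsilon)n/M-\nu n\ge dn/(2M)$.
\item The underlying graph of $D[U]$ has independence number at most $\gamma n$. By the trivial bound $\chi\ge|U|/\alpha$, its chromatic number is at least $d/(2M\gamma)>(\ell-2)^2$.
\item By Burr's theorem, every oriented graph of chromatic number at least $(k-1)^2$ contains every oriented tree on $k$ vertices. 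Applying it with $k=\ell-1$ gives a copy of $P_a$ inside $D[U]$.
\item Both endpoints of this copy lie in $N^+(x)$. Mapping $a\mapsto x$ therefore yields a copy of $\Cvec$ in $D-W$ with one vertex in $V_i$ and $\ell-1$ vertices in $V_j$.
\end{enumerate}

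\emph{Vector $(\ell-1)\mathbf e_i+\mathbf e_j$.} Argue symmetrically. Pick a typical $y\in Y\setminus W$ with $d^-(y,X)\ge(d-\varepsilon)|X|$. Find a copy of $P_b$ inside $N^-(y,X)\setminus W$ by the same chromatic-number argument, and map $b\mapsto y$. If the pair is instead oriented from $Y$ to $X$, use the sink for the first vector and the source for the second.

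The main obstacle is the path-embedding step: guaranteeing an arbitrarily oriented path on $\ell-1$ vertices inside a set of size only linear in $n$, using nothing but $\alpha\le\gamma n$. A Ramsey-type argument that first finds a tournament would need $\alpha$ to be sublinear with a much worse dependence, so it is not the route to take. The Gallai--Roy / Burr-type route avoids this, since high chromatic number forces every oriented tree. The remaining care is in the direction bookkeeping between source/sink and the direction of the regular pair, and in taking $\gamma$ small enough relative to $d/M$ and $\ell$.
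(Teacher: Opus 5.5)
Your proposal is correct and follows essentially the same route as the paper. The paper also picks a typical vertex of $X$ (resp.\ $Y$) outside $W$, uses $\chi\ge |U|/\alpha$ on its out-neighbourhood in $Y$ (resp.\ in-neighbourhood in $X$) to embed the path obtained by deleting a source (resp.\ sink) of the cycle, and adds the apex back. The only differences are cosmetic: the paper cites the Addario-Berry et al.\ bound $\kappa_\ell$ rather than Burr's $(k-1)^2$, which is immaterial since you take $\chi$ strictly above either threshold; and in step~1 the inequality you actually need is $(1-\varepsilon)|X|>\nu n$, which does hold because $|X|\ge n/M$ and $\nu=d/(4M)$.
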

\begin{proof}
Choose a source $s$ and a sink $t$ of $\Cvec$, and recall that
$\kappa_\ell:=(\ell-2)^2+1$.  Choose $\nu$ and $\gamma$, in the order, such that $0<\nu\ll\min\{\varepsilon/(2M),d/(8M)\} $ and $ 0<\gamma\ll d/(4MK).$  Fix an arbitrary set $W\subseteq V(D)$ with
$|W|\le\nu n$.

Because $(X,Y)$ is an $(\varepsilon , d)$-regular directed pair, it follows that all but at most $\varepsilon m$ vertices $x\in X$ satisfy
$d_D^+(x,Y)\ge(d-\varepsilon)m$.  Since $\nu n\le\varepsilon m/2$, there is a vertex
$x\in X\setminus W$ with this property.  Hence
\[
|N_D^+(x)\cap(Y\setminus W)|
\ge(d-\varepsilon)m-\nu n
\ge \frac d2m
\ge \frac d{2M}n.
\]
Let $S:=D[N_D^+(x)\cap(Y\setminus W)]$.  Since
$\alpha(S)\le\alpha(D)\le\gamma n$, we have
\[
\chi(UG(S))\ge \frac{|S|}{\alpha(S)}
\ge \frac{dn/(2M)}{\gamma n}
\ge 2K>K.
\]
By the embedding lemma, $S$ contains a copy of the oriented path $\Cvec-s$; adding $x$ gives a copy of $\Cvec$ with index vector $\mathbf e_i+(\ell-1)\mathbf e_j$.

Similarly, all but at most $\varepsilon m$ vertices $y\in Y$ satisfy
$d_D^-(y,X)\ge(d-\varepsilon)m$.  Choose such a vertex $y\in Y\setminus W$ and set
$S':=N_D^-(y)\cap(X\setminus W)$.  Then
$|S'|\ge d m/2\ge dn/(2M)$, and the same chromatic estimate gives
$\chi(UG(D[S']))>K$.  Thus $D[S']$ contains a copy of $\Cvec-t$; adding $y$ gives a copy of $\Cvec$ with index vector $(\ell-1)\mathbf e_i+\mathbf e_j$. Since $W$ was arbitrary, both index vectors are $(\Cvec,\nu)$-robust.
\end{proof}

\begin{lemma}[Oriented transferral merging]\label{lem:merge}{\cite[Lemma~5.11]{CHWY2024}}
 {Let $H$ be a fixed oriented graph on $h\ge3$ vertices, let $L\in\mathbb N$, and let $\beta>0$. For all sufficiently large $N$, suppose that an oriented graph $G$ of order $N$ has a partition $\mathcal P=\{V_1,\dots,V_p\}$ such that every $V_i$ is $(H,\beta N,L)$-closed. If, for distinct $i,j$, there are two $(H,\beta)$-robust $h$-vectors $\mathbf p,\mathbf q$ satisfying $\mathbf p-\mathbf q=\mathbf e_i-\mathbf e_j$, then $V_i\cup V_j$ is $(H,\beta N/2,L')$-closed for an integer $L'=L'(h,L)$.}
\end{lemma}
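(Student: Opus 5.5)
The plan is to build the reachability certificate for an arbitrary pair $x\in V_i$, $y\in V_j$ directly. I will splice two robust copies of $H$, with profiles $\mathbf p$ and $\mathbf q$, together with a bounded number of reachability certificates supplied by the closedness of each $V_r$. Pairs lying inside a single $V_i$ (or a single $V_j$) are already $(H,\beta N,L)$-reachable, hence also $(H,\beta N/2,L')$-reachable for any $L'\ge L$. So only pairs $x\in V_i$, $y\in V_j$ with $i\ne j$ need attention.

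Fix such $x,y$ and a set $W$ with $|W|\le \beta N/2$, avoiding $x,y$. Let $W_0:=W\cup\{x,y\}$. By $(H,\beta)$-robustness of $\mathbf p$ (valid since $|W_0|\le\beta N$ for large $N$), choose an $H$-copy $F_p\subseteq G-W_0$ with $\mathbf i_{\mathcal P}(V(F_p))=\mathbf p$. Then choose an $H$-copy $F_q\subseteq G-(W_0\cup V(F_p))$ with profile $\mathbf q$.

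Since $\mathbf p-\mathbf q=\mathbf e_i-\mathbf e_j$, the two copies agree on every part except that $F_p$ has one extra vertex in $V_i$ and $F_q$ has one extra vertex in $V_j$. I will fix a bijection that records this:
\begin{itemize}
\item pair all vertices $a_1,\dots,a_{h-1}$ of $F_p$ except one vertex $u\in V(F_p)\cap V_i$ with vertices $b_1,\dots,b_{h-1}$ of $F_q$, excluding one $w\in V(F_q)\cap V_j$;
\item do this so that $a_k$ and $b_k$ always lie in the same part.
\end{itemize}

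Next I use closedness repeatedly, each time forbidding $W_0$, $V(F_p)\cup V(F_q)$ and all previously chosen sets. The forbidden set always has size at most $\beta N/2+O_{h,L}(1)\le\beta N$.
\begin{itemize}
\item Since $x,u\in V_i$, there is a set $Q_1$ with $|Q_1|\le Lh-1$ such that $G[Q_1\cup\{x\}]$ and $G[Q_1\cup\{u\}]$ both have $H$-factors.
\item Since $y,w\in V_j$, there is a set $Q_2$ such that $G[Q_2\cup\{y\}]$ and $G[Q_2\cup\{w\}]$ both have $H$-factors.
\item For each $k$, since $a_k,b_k$ lie in a common part, there is a set $Q^{(k)}$ such that $G[Q^{(k)}\cup\{a_k\}]$ and $G[Q^{(k)}\cup\{b_k\}]$ both have $H$-factors.
\end{itemize}
All these sets are pairwise disjoint and disjoint from $W_0$, by the order in which they are chosen. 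Set
\[
Q:=V(F_p)\cup V(F_q)\cup Q_1\cup Q_2\cup\bigcup_{k}Q^{(k)} .
\]

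It remains to exhibit the two $H$-factors.
\begin{itemize}
\item $Q\cup\{x\}$ is covered by: $Q_1+x$; the copy $F_p$ (which uses $u$ and all $a_k$); $Q^{(k)}+b_k$ for each $k$; and $Q_2+w$.
\item $Q\cup\{y\}$ is covered by: $Q_2+y$; the copy $F_q$ (which uses $w$ and all $b_k$); $Q^{(k)}+a_k$ for each $k$; and $Q_1+u$.
\end{itemize}
Each vertex of $Q$ is used exactly once in each case, so both are $H$-factors. Moreover $|Q|\le 2h+(h+1)(Lh-1)$, so $|Q|\le L'h-1$ with, for instance, $L'=(h+1)L+2$, which depends only on $h$ and $L$.

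The main point needing care is this bookkeeping: the parity swap. The extra $V_i$-vertex $u$ and the extra $V_j$-vertex $w$ must be routed to opposite sides. In the $x$-factor $u$ lies in $F_p$ while $w$ is absorbed by $Q_2$, and in the $y$-factor the roles reverse. The other point is checking that every application of robustness and closedness sees a forbidden set of size at most $\beta N$. That holds because only $O_{h,L}(1)$ vertices are added to $W$, and $N$ is large.
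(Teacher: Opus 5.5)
Your proof is correct. The paper does not prove this lemma; it imports it from \cite[Lemma~5.11]{CHWY2024}, and your argument is the standard one behind that lemma: two disjoint robust copies with profiles $\mathbf p$ and $\mathbf q$, a part-preserving matching of $V(F_p)\setminus\{u\}$ with $V(F_q)\setminus\{w\}$, and closedness connectors for $(x,u)$, $(y,w)$ and each matched pair, which is the usual chaining $x\sim u\sim w\sim y$ of reachability written out explicitly. Two cosmetic points: each forbidden set passed to the reachability definition must exclude the pair itself (e.g.\ $x$ and $u$ when building $Q_1$), which changes nothing, and your count $|Q|\le (h+1)Lh+h-1$ already allows $L'=(h+1)L+1$.
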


\begin{lemma}[Absorbing lemma]\label{lem:absorber} {\cite{LoMarkstrom,CHWY2024}}
Let $H$ be a fixed oriented graph on $h\ge2$ vertices. For every $\eta,\beta>0$ and $t\in\mathbb N$ there are $\xi>0$ and $n_0$ such that the following holds. If $D$ is a digraph with $n\ge n_0$, and $V(D)$ is $(H,\beta n,t)$-closed, then there is a set $A\subseteq V(D)$ with $|A|\le\eta n$ such that, for any set $U\subseteq V(D)\setminus A$ with $|U|\le\xi n$ and $h\mid |A\cup U|$, the digraph $D[A\cup U]$ has an $H$-factor.
\end{lemma}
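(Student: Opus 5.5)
This is the absorbing lemma (Lemma~\ref{lem:absorber}), quoted from \cite{LoMarkstrom,CHWY2024}. I would reprove it by the standard Lo--Markstr\"om absorbing argument. The key object is an absorbing set for an $h$-set.

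\textbf{Absorbing sets.} For an $h$-set $S\subseteq V(D)$, call a set $T$ disjoint from $S$ an \emph{absorbing set for $S$} if $|T|\le h^2 t$ and both $D[T]$ and $D[T\cup S]$ have $H$-factors. To build one, fix $S=\{s_1,\dots,s_h\}$ and take any copy $F$ of $H$ in $D-S$ with vertices $f_1,\dots,f_h$. Closedness makes each pair $(s_i,f_i)$ reachable, so we can choose pairwise disjoint sets $Q_i$ with $|Q_i|\le th-1$, avoiding everything already chosen, such that $D[Q_i\cup\{s_i\}]$ and $D[Q_i\cup\{f_i\}]$ both have $H$-factors. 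The forbidden set has size $O(h^2t)\le\beta n$, so the choices go through.

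Then $T:=V(F)\cup\bigcup_i Q_i$ works:
\begin{itemize}
\item $D[T]$ has an $H$-factor, namely $F$ together with the factors of each $Q_i\cup\{f_i\}$ that omit $s_i$. More precisely, use $F$ plus a factor of each $Q_i$ with $f_i$ added back; since $F$ already covers $f_i$, we use the $H$-factor of $Q_i\cup\{f_i\}$ instead of $F$ when absorbing.
\item To absorb $S$, cover $T\cup S$ by the $H$-factors of $Q_i\cup\{f_i\}$ and $Q_i\cup\{s_i\}$ in the appropriate way: each $f_i$ and each $s_i$ is matched with its own $Q_i$.
\end{itemize}
The bookkeeping on which copy covers $f_i$ needs care. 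The standard fix is to have each $f_i$ reachable to $s_i$ via two sets, or to define $T:=\bigcup_i Q_i\cup V(F)$ and check that both $T$ and $T\cup S$ decompose.

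\textbf{Counting.} The step I expect to be the main obstacle is showing that every $h$-set $S$ has $\Omega(n^{|T|})$ absorbing sets of some fixed size $m$. This needs a counting version of reachability. The trick is to use the robustness of the definition: since $W$ of linear size $\beta n$ can be deleted, a greedy/supersaturation argument yields at least $c n^{th-1}$ distinct sets $Q$ for each reachable pair. If sizes vary, we pigeonhole over the bounded set of possible sizes and pad sets with extra disjoint $H$-copies to a common size. Together with the $\Omega(n^h)$ copies of $H$ (again by repeated deletion), this gives $\Omega(n^{m})$ absorbers for each $S$.

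\textbf{Random selection.} Choose a random family of $m$-sets, including each independently with probability $p=\eta' n^{1-m}$. By Chernoff and Markov, with positive probability:
\begin{itemize}
\item the family has size at most $\eta n/(2m)$;
\item it has few intersecting pairs, which we delete;
\item every $S$ still has at least $\xi' n$ members that are absorbing for it.
\end{itemize}
Let $A$ be the union of the surviving sets; $D[A]$ has an $H$-factor.

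Given $U$ with $|U|\le\xi n$ and $h\mid |A\cup U|$, we have $h\mid|U|$, since $h$ divides $|A|$. Split $U$ into $h$-sets and absorb them one by one using distinct absorbers, which is possible because $|U|/h\le\xi' n$ for $\xi\ll\xi'$. This yields an $H$-factor of $D[A\cup U]$, and $|A|\le\eta n$.
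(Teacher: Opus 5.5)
\textbf{There is a genuine gap in the counting step.} The paper gives no proof of this lemma; it imports it from \cite{LoMarkstrom,CHWY2024}. Your route is the Lo--Markstr\"om random selection. That argument needs the \emph{counting} notion of reachability: at least $cn^{th-1}$ connectors per pair. Definition~\ref{def:reach} is instead the \emph{robust} notion used in \cite{CHWY2024}, and the bridge you propose between the two is false.

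Robustness says only that the family of connectors for $x,y$ has no transversal of size $\beta n$. Greedily this gives about $\beta n/(th)$ pairwise disjoint connectors and nothing more; a family of $\beta n+1$ disjoint sets already has this property. Supersaturation is not available either, since the lemma assumes nothing about $D$ beyond closedness, and in Ramsey--Tur\'an settings copies of $H$ can be genuinely sparse.

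For a concrete example, take two triangle-free graphs on $n/2$ vertices with independence number $o(n)$, join them completely, and orient every edge along a fixed linear order.
\begin{itemize}
\item Each side is $(TT_3,\beta n,1)$-closed: deleting $\beta n$ vertices leaves an edge on the other side, which forms a $TT_3$ with either vertex.
\item The profiles $(2,1)$ and $(1,2)$ are robust, so by Lemma~\ref{lem:merge} the whole vertex set is closed.
\item Every triangle contains an edge inside a side. There are only $o(n^2)$ such edges, because a triangle-free graph has degrees at most $\alpha=o(n)$.
\end{itemize}
Hence there are $o(n^3)$ copies of $TT_3$, and only $o(n^m)$ sets of size $m$ carry a $TT_3$-factor. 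So no $3$-set has $\Omega(n^m)$ absorbers, and padding by extra $H$-copies fails for the same reason. With $p=\eta' n^{1-m}$, each $S$ expects $o(n)$ absorbers in your random family, and the random-selection step collapses.

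\textbf{What works instead.} The robust absorbing lemma of Nenadov and Pehova, which is the form used in \cite{CHWY2024}, avoids counting and builds everything greedily; greedy construction is exactly what robustness supports.

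Your absorber for a single $h$-set is fine as a building block once the bookkeeping is fixed. $D[T]$ is tiled by the factors of $D[Q_i\cup\{f_i\}]$, $i\in[h]$, without using $F$. $D[T\cup S]$ is tiled by $F$ together with the factors of $D[Q_i\cup\{s_i\}]$.

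From such absorbers one can assemble gadgets that absorb any one of boundedly many prescribed vertices: take an $(h-1)$-set $P$ together with disjoint absorbers for the $h$-sets $P\cup\{v\}$. Linearly many such gadgets are then embedded one at a time, each avoiding the at most $\beta n/2$ vertices used so far. They are arranged along Montgomery's bounded-degree bipartite template, which keeps a perfect matching after deleting any suitable subset of a designated flexible part. The leftover set $U$ is tiled together with the appropriate number of flexible vertices, and the template absorbs the rest.

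Randomness can still be used over vertex subsets, since Chernoff bounds apply to the linearly many \emph{disjoint} connectors that robustness provides. It cannot be used to select absorbing $m$-sets.
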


We shall frequently use the elementary bound
\begin{fact}\label{fact1}
Every graph $G$ satisfies $\chi(G)\ge |V(G)|/\alpha(G)$.
\end{fact}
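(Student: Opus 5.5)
The plan is the standard counting argument that compares a proper colouring with the independence number. Set $k:=\chi(G)$ and fix a proper colouring $c:V(G)\to[k]$. For each $i\in[k]$ let $C_i:=c^{-1}(i)$ be the $i$-th colour class. The classes $C_1,\dots,C_k$ partition $V(G)$; some may be empty, which does no harm.

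The key observation is that every colour class is an independent set. Two vertices of $C_i$ receive the same colour, so properness of $c$ means they cannot be adjacent. Hence $G[C_i]$ is edgeless, and by the definition of the independence number $|C_i|\le\alpha(G)$ for every $i\in[k]$.

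Summing over the partition gives
\[
|V(G)|=\sum_{i=1}^{k}|C_i|\le k\,\alpha(G)=\chi(G)\,\alpha(G).
\]
Dividing by $\alpha(G)$ yields the claim. This is legitimate because $\alpha(G)\ge1$ whenever $V(G)\neq\emptyset$; the empty graph is trivial, reading both sides as $0$ or by convention.

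There is no real obstacle. The only point to check is that we use a colouring with exactly $\chi(G)$ classes, so that the bound is in terms of $\chi(G)$ rather than an arbitrary number of colours.

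In the applications, for instance in the proof of Lemma~\ref{lem:transferral}, the fact is applied to the underlying graph of a subdigraph $S$. There one uses $\alpha(UG(S))\le\alpha(D)$, since an independent set of $S$ is also independent in $D$.
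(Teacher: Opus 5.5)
Your proof is correct; it is the standard colour-class counting argument, which the paper omits by stating the fact as elementary. One small caveat on your closing remark: $\alpha(UG(S))\le\alpha(D)$ requires $S$ to be an \emph{induced} subdigraph, which holds in the paper's applications, e.g.\ $S=D[N_D^+(x)\cap(Y\setminus W)]$.
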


\begin{lemma}[Embedding Lemma]\label{lem:tree}\cite{AddarioBerry2013}
Every $(k^2/2 - k/2 + 1)$-chromatic digraph contains every oriented tree of order $k$.
\end{lemma}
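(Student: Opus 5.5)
The plan is induction on $k$, in the form of the usual ``delete a leaf'' argument. Write $f(k):=k^2/2-k/2+1=\binom k2+1$, so $f(1)=1$ and $f(k)-f(k-1)=k-1$. The case $k=1$ is trivial. Suppose every digraph of chromatic number at least $f(k-1)$ contains every oriented tree of order $k-1$, and let $D$ satisfy $\chi(D)\ge f(k)$. We may first pass to a $f(k)$-vertex-critical subdigraph, so every vertex of $D$ has total degree at least $f(k)-1$.

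Let $T$ be an oriented tree of order $k$. Pick a leaf $l$ with neighbour $u$ in $T$; by symmetry (reversing all arcs of $D$ and $T$) assume $ul\in A(T)$. Put $T':=T-l$. Call a vertex $v$ of $D$ \emph{poor} if $d^+_D(v)\le k-2$, and let $B$ be the set of poor vertices. The central claim is
\[
\chi(D[B])\le k-1 .
\]
Granting this, $\chi(D-B)\ge\chi(D)-\chi(D[B])\ge f(k)-(k-1)=f(k-1)$. By induction $D-B$ contains a copy of $T'$. The image $v$ of $u$ is not poor, so $d^+_D(v)\ge k-1$. At most $k-2$ of these out-neighbours are used by the copy of $T'$ (which has $k-1$ vertices, one of them $v$ itself). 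Hence some out-neighbour of $v$ is free, and it can serve as the image of $l$, completing a copy of $T$.

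The claim $\chi(D[B])\le k-1$ is the main obstacle, and it is where the sharp constant comes from. The naive bound is not enough. Every vertex of $D[B]$ has out-degree at most $k-2$, so $D[B]$ has at most $(k-2)|B|$ arcs and is $(2k-4)$-degenerate. That only gives $\chi(D[B])\le 2k-3$, and the recursion then gives Burr's $(k-1)^2+1$ instead of $f(k)$. To gain the factor $2$ I would not delete all poor vertices at once. Instead I would strengthen the induction hypothesis to digraphs equipped with a fixed \emph{median order} $v_1,\dots,v_n$, that is, an ordering maximising the number of forward arcs. The key property of a median order is the feedback property: for $i<j$, the vertex $v_i$ dominates at least half of the interval $\{v_{i+1},\dots,v_j\}$, and $v_j$ is dominated by at least half of $\{v_i,\dots,v_{j-1}\}$.

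With this property, a vertex with at most $k-2$ out-neighbours has all of its \emph{forward} neighbours inside a short initial segment of the vertices after it. I would then colour $B$ greedily from right to left along the median order. The aim is to show that, at the moment a vertex $v\in B$ is coloured, at most $k-2$ of its already-coloured neighbours, namely those to its right, can block colours. Then $k-1$ colours suffice.

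Concretely, if $v=v_i$ had $k-1$ neighbours to its right, take $j$ to be the position of the $(k-1)$-st of them. The feedback property applied to the interval $[i,j]$, together with $d^+(v)\le k-2$, should force a contradiction. The in-leaf case is handled symmetrically by colouring left to right, using the in-degree version of the feedback property.

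Finally I must check two things. First, the median order restricted to $D-B$ is still a median order of $D-B$, which holds because deleting vertices preserves the feedback property on the remaining intervals; this keeps the strengthened induction self-contained. Second, if this interval counting falls short, the fallback is to define ``poor'' relative to the order, as having fewer than $k-1$ out-neighbours among later vertices. I would then redo the counting so that the deleted class is $(k-2)$-degenerate in the right-to-left order, since that also yields the same loss of $k-1$.
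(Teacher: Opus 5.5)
The paper does not prove this lemma; it quotes it from Addario-Berry, Havet, Linhares Sales, Reed and Thomass\'e \cite{AddarioBerry2013}. Your reconstruction has a genuine gap at exactly the step you call central: the claim $\chi(D[B])\le k-1$ is false for general digraphs.

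Already for $k=3$ (so $f(3)=4$), let $D$ be the tournament formed by a directed triangle $a\to b\to c\to a$ and a vertex $x$ dominating $a,b,c$. It is $4$-chromatic and vertex-critical. Its out-poor vertices (out-degree at most $k-2=1$) are exactly $a,b,c$, so $\chi(D[B])=3>k-1$. For general $k$, take a regular tournament on $2k-3$ vertices dominated by a transitive tournament on $f(k)-2k+3\ge 1$ further vertices; then $\chi(D[B])=2k-3$.

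No choice of order repairs this. In your scheme the image of $u$ may be any vertex of the digraph you recurse into, and the counting step needs $d^+_D(v)\ge k-1$ there. So the deleted set must contain every out-poor vertex. Your fallback fails on the same example, because $a,b,c$ have at most one later out-neighbour in every order. With a loss of $2k-3$ per step, the recursion only gives Burr's bound $(k-1)^2+1$.

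The median-order step also rests on a misquoted property. For a general digraph, maximality only gives $d^+(v_i,I)\ge d^-(v_i,I)$ for every interval $I$ to the right of $v_i$. The form ``$v_i$ dominates at least half of $I$'' holds only for tournaments. So a vertex with $d^+(v_i)\le k-2$ can have up to $2k-4$ neighbours to its right. Right-to-left greedy colouring then needs $2k-3$ colours, which is the naive bound, and the example shows it is attained. Also, the restriction of a median order to a non-interval set need not be a median order: delete $b$ from the order $(a,b,c)$ of the directed triangle.

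Your argument does work verbatim when $D$ is acyclic. Colour $B$ in reverse topological order; every already-coloured neighbour of a vertex is then an out-neighbour, so $k-1$ colours suffice, and $D-B$ stays acyclic. What is missing is an idea that handles backward arcs in the general case. Any such fix must avoid deleting all out-poor vertices, for instance by controlling where $u$ is embedded. Without it, the proposal does not establish the $\binom k2+1$ bound.
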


The lattice-absorption method consists of three key steps. First, we construct a small absorbing set using degree conditions together with robust index-vector constraints, which is guaranteed by the absorbing lemma. Second, we establish an almost-target structure on the residual graph such that only a small number of vertices remain uncovered, and this is achieved via the almost-covering lemma. Third, we absorb those uncovered vertices into the absorbing set to produce the desired global configuration. To prove the desired absorbing lemma and almost-covering lemma, we need a powerful tool, namely the Diregularity Lemma, which is a version of the Regularity Lemma for digraphs due to Alon and Shapira \cite{AlonShapira}.

The \textbf{density} of a bipartite graph $G=(A,B)$ with vertex classes $A$ and $B$ is defined to be
\[
\boldsymbol{d_G(A,B)}:=\frac{e_G(A,B)}{|A||B|}.
\]
We often write $d(A,B)$ if this is unambiguous. Given $\varepsilon>0$, we say that $G$ is \textbf{$\boldsymbol{\varepsilon}$-regular} if for all subsets $X\subseteq A$ and $Y\subseteq B$ with $|X|>\varepsilon|A|$ and $|Y|>\varepsilon|B|$ we have that $|d(X,Y)-d(A,B)|<\varepsilon$. Similarly, a directed pair $(X,Y)$ is \textbf{$\boldsymbol{\varepsilon}$-regular} if
for all $X'\subseteq X$ and $Y'\subseteq Y$ with
$|X'|\geq\varepsilon|X|$ and $|Y'|\geq\varepsilon|Y|$, we have $|d_D(X',Y')-d_D(X,Y)|<\varepsilon.$ It is \textbf{$\boldsymbol{(\varepsilon,d)}$-regular} if it is $\varepsilon$-regular and
$d_D(X,Y)\geq d$.  We shall repeatedly use the following standard slicing consequences: if $(X,Y)$ is $\eps$-regular of density $p$ and $Y'\subseteq Y$ has $|Y'|\ge\eps |Y|$, then
\begin{equation}\label{2.1}
    \text{all but at most $\eps|X|$ vertices $x\in X$ satisfy } d_D^+(x,Y')\ge (p-\eps)|Y'|.
\end{equation}
The analogous statement with in-neighbours is immediate by reversing all arcs.

\begin{lemma}[Koml\'os and Simonovits~\cite{KomlosSimonovits}, Slicing Lemma]\label{lem:slicing}
Assume $(V_1,V_2)$ is $\varepsilon$-regular with density $\beta$. For some $\alpha\ge \varepsilon$, let $V_1'\subseteq V_1$ with $|V_1'|\ge \alpha|V_1|$ and $V_2'\subseteq V_2$ with $|V_2'|\ge \alpha|V_2|$. Then $(V_1',V_2')$ is $\varepsilon'$-regular with $\varepsilon':=\max\{2\varepsilon,\varepsilon/\alpha\}$ and for its density $\beta'$ we have $|\beta'-\beta|<\varepsilon$.
\end{lemma}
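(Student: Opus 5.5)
The plan is to prove the Slicing Lemma (Lemma~\ref{lem:slicing}) straight from the definition of $\varepsilon$-regularity, using the pair $(V_1,V_2)$ as the reference point. No earlier result of the paper is needed; the argument compares two subpair densities with $\beta$ and applies the triangle inequality.

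First I handle the density $\beta'=d(V_1',V_2')$. We have $|V_1'|\ge\alpha|V_1|\ge\varepsilon|V_1|$ and likewise for $V_2'$, so regularity of $(V_1,V_2)$ applies to the pair $(V_1',V_2')$. This gives $|\beta'-\beta|<\varepsilon$.

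Next fix $X\subseteq V_1'$ and $Y\subseteq V_2'$ with $|X|>\varepsilon'|V_1'|$ and $|Y|>\varepsilon'|V_2'|$. Since $\varepsilon'\ge\varepsilon/\alpha$, we get
\[
|X|>\frac{\varepsilon}{\alpha}\,|V_1'|\ge\frac{\varepsilon}{\alpha}\,\alpha|V_1|=\varepsilon|V_1|,
\]
and similarly $|Y|>\varepsilon|V_2|$. So $X,Y$ are large enough subsets of the original pair, and regularity of $(V_1,V_2)$ gives $|d(X,Y)-\beta|<\varepsilon$. The triangle inequality then yields
\[
|d(X,Y)-\beta'|\le|d(X,Y)-\beta|+|\beta-\beta'|<2\varepsilon\le\varepsilon'.
\]
Hence $(V_1',V_2')$ is $\varepsilon'$-regular.

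The only delicate point, and the one I expect to need care, is the boundary case in the first step. The undirected definition uses strict inequalities $|X|>\varepsilon|A|$, so when $\alpha=\varepsilon$ and $|V_i'|=\varepsilon|V_i|$ exactly, regularity of $(V_1,V_2)$ does not apply to $(V_1',V_2')$ directly. I would handle this by using the non-strict version $|X'|\ge\varepsilon|X|$, as in the paper's directed definition. The strict-inequality estimate for $X,Y$ in the second step is unaffected.
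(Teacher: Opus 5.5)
Your argument is correct and is the standard proof of the Koml\'os--Simonovits Slicing Lemma; the paper only cites this lemma and gives no proof of its own. You get the density bound by applying regularity of $(V_1,V_2)$ to $(V_1',V_2')$, and $\varepsilon'$-regularity from $|X|>\frac{\varepsilon}{\alpha}|V_1'|\ge\varepsilon|V_1|$ together with the triangle inequality. Your boundary remark is also sound. Under the strict convention the only failing case is $\alpha=\varepsilon$ with $|V_i'|=\varepsilon|V_i|$; there $\varepsilon'\ge 1$ makes the regularity conclusion vacuous, so only the density claim is at stake, and adopting the paper's non-strict directed convention resolves it.
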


We employ the degree form of the directed regularity lemma due to Alon and Shapira~\cite{AlonShapira}, which is stated below.

\begin{lemma}[Degree-form diregularity]
\label{lem:regularity}
For every real number $0<\eps<1$, every real number $0\le d<1$, and all integers $M_0,q\ge1$, there exist integers
$M=M(\eps,d,M_0,q)\ge M_0$ and
$n_{\rm reg}=n_{\rm reg}(\eps,d,M_0,q)$
such that the following holds. Let $D$ be a digraph of order $n\ge n_{\rm reg}$, and let $\mathcal Q$ be a partition of $V(D)$ into at most $q$ nonempty parts. Then there are a partition
\[
V(D)=V_0\mathbin{\cup}V_1\mathbin{\cup}\cdots\mathbin{\cup}V_k
\]
refining $\mathcal Q$, and a spanning subdigraph $D'\subseteq D$, such that
\begin{enumerate}[label=\rm(\roman*)]
\item $M_0\le k\le M$, $|V_0|\le\eps n$, and $|V_1|=\cdots=|V_k|=:m$;
\item $D'[V_i]$ is empty for every $i\in[k]$, and $V_0$ is isolated in $D'$;
\item for every $v\in V(D)$,
$d_{D'}^+(v)\ge d_D^+(v)-(d+2\eps)n$ and $d_{D'}^-(v)\ge d_D^-(v)-(d+2\eps)n;$
\item for every distinct $i,j\in[k]$, the pair $(V_i,V_j)$ in $D'$ is $\eps$-regular and has density either $0$ or at least $d$.
\end{enumerate}
\end{lemma}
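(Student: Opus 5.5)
The plan is to derive the statement from the Alon--Shapira diregularity lemma~\cite{AlonShapira} in the usual way. Before doing that, I need to check whether clause (iii) can really be demanded for \emph{every} $v\in V(D)$, including $v\in V_0$, and I expect it cannot. The standard route is to take an $\eps'$-regular equitable partition refining $\mathcal Q$, with $\eps'\ll\eps$, and then form $D'$ by deleting four kinds of arcs: arcs inside clusters, arcs of irregular pairs, arcs of pairs of density below $d$, and all arcs incident to $V_0$. The degree loss at a vertex $v\notin V_0$ is then at most $(d+2\eps)n$, after moving the few vertices with atypically large loss into $V_0$. Clause (ii), however, forces every $v\in V_0$ to have $d_{D'}^\pm(v)=0$. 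So (iii) at such a $v$ reads $d_D^\pm(v)\le(d+2\eps)n$, which the construction gives no control over.

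This is a genuine obstruction, not just a gap in the method, and I would record a counterexample. Take $\mathcal Q$ trivial ($q=1$), fix $\eps,d$ with $d+2\eps<1/2$, and let $D$ be a regular tournament on $n$ vertices, where $n$ is a prime larger than $M$. Then $d_D^\pm(v)=(n-1)/2$ for every $v$. Any admissible partition has $1\le k\le M<n$ and equal cluster sizes $m$.
\begin{itemize}
\item If $V_0=\emptyset$, then $km=n$, so $k=1$. Then (ii) makes $D'$ empty, and (iii) fails at every vertex.
\item If $V_0\ne\emptyset$, any $v\in V_0$ is isolated in $D'$. Then (iii) requires $(n-1)/2\le(d+2\eps)n$, which is false for large $n$.
\end{itemize}
So the lemma as worded cannot be proved for small $d,\eps$; every later use must rely only on a corrected form.

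I would therefore prove the corrected statement, in which (iii) is required only for $v\in V(D)\setminus V_0$; the other clauses stay unchanged. The steps are as follows.
\begin{itemize}
\item Apply the Alon--Shapira lemma with $\eps'\ll\eps,d$ and $M_0'\ge\max\{M_0,q/\eps'\}$, refining $\mathcal Q$ by intersecting each regular cluster with the parts of $\mathcal Q$. Re-cut the resulting pieces into equal sizes, and put the leftovers, at most $\eps n/4$ vertices, into $V_0$.
\item Delete the arcs of the four kinds listed above. The arcs lost inside clusters and in irregular pairs total at most $\eps' n^2$, and those lost in pairs of density below $d$ total at most $dn^2/2$.
\item Use a Markov-type argument: at most $\eps n/2$ vertices lose more than $(d+\eps)n$ out- or in-arcs among the non-$V_0$ arcs. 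Move these vertices into $V_0$ in whole-cluster-balanced amounts, trimming clusters equally so that sizes stay equal.
\item Deleting arcs incident to $V_0$ costs each remaining vertex at most $|V_0|\le\eps n$ further degree, giving $(d+2\eps)n$ in total. Regularity of the trimmed pairs follows from Lemma~\ref{lem:slicing}, since the clusters shrink by at most a factor $1-\sqrt{\eps'}$.
\end{itemize}

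The main obstacle is the bookkeeping in the third step: removing bad vertices while keeping the clusters equal, the partition refining $\mathcal Q$, and $|V_0|\le\eps n$ all at once. I would handle it by removing the same number of vertices from every cluster, taking the worst case over clusters. For the exceptional set, the paper's downstream arguments should invoke only the weaker guarantee that vertices of $V_0$ retain their degrees in $D$, which they can then be treated with separately.
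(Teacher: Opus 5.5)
Your counterexample is correct, and the defect it exposes lies in the statement, not in your method. By (ii), every $v\in V_0$ has $d^{\pm}_{D'}(v)=0$, so (iii) at $v$ forces $d^{\pm}_D(v)\le(d+2\eps)n$. For a regular tournament of prime order $n>M$ with $d+2\eps<1/2$, this forces $V_0=\emptyset$, hence $k\mid n$, hence $k=1$ (and no admissible partition exists at all if $M_0\ge2$); then $D'$ has no arcs. The paper gives no argument for this lemma beyond the attribution to Alon and Shapira. The degree form in common use keeps (iii) for every vertex but does not require $V_0$ to be isolated, so the printed combination is simply inconsistent.

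Your repair, requiring (iii) only for $v\in V(D)\setminus V_0$, is the right one for this paper. Every later use of (iii) concerns vertices lying in clusters: Lemma~\ref{lem:weightedore}(ii),(iii), Lemma~\ref{lem:connected}, \eqref{A0}, and the proof of Lemma~\ref{lem:profilehall}. Exceptional vertices are handled through their degrees in $D$ in Lemma~\ref{lem:pretil}. Moreover, keeping $V_0$ isolated preserves verbatim the bound $\sum_{x\in X_C}d_D^*(x)\le m^2\sum_{i\in C}w_i(D)+|X_C|(d+2\eps)n$ used in Lemma~\ref{lem:weightedore}(iii).

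Where your derivation of the corrected statement breaks is the third step. A Markov argument on totals is fine for arcs inside clusters and in irregular pairs, because those totals are $O(\eps')n^2$. It cannot work for arcs of pairs of density below $d$. Their total is of order $dn^2$, so the average loss per vertex is already about $dn$, and Markov at threshold $(d+\eps)n$ only bounds the number of exceptional vertices by $\frac{d}{d+\eps}n$, not by $\eps n/2$.

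The missing idea is to use regularity pair by pair. If $(V_i,V_j)$ is $\eps'$-regular of density below $d$, then fewer than $\eps' m$ vertices $x\in V_i$ satisfy $d^+(x,V_j)\ge(d+\eps')m$; otherwise these vertices together with $V_j$ would witness irregularity. The same holds for in-degrees. So the number of incidences ``$x$ is atypical for index $j$'' is at most $2\eps' k^2m\le 2\eps' kn$. Markov applied to this count shows that at most $2\sqrt{\eps'}\,n$ vertices are atypical for more than $\sqrt{\eps'}\,k$ indices. Every other vertex loses at most $(d+\eps'+\sqrt{\eps'})n$ in each direction to low-density pairs.

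Three smaller points in the same step also need care.
\begin{itemize}
\item Trimming every cluster ``by the worst case'' fails if some cluster is mostly bad. First discard entirely each cluster containing more than $(\eps')^{1/4}m$ bad vertices, then trim the remaining clusters by a common amount.
\item Trimming moves densities by up to $\eps'$. Delete pairs of density below $d+\eps'$ rather than $d$, so that surviving pairs still have density at least $d$ after Lemma~\ref{lem:slicing}.
\item Pairs of pieces cut from the same original cluster carry no regularity guarantee. Their arcs must be deleted along with the inside-cluster arcs; each vertex loses at most $m$ this way.
\end{itemize}
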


 We call the spanning digraph $D' \subseteq D$ given by the Diregularity lemma \textbf{the pure digraph}.The vertex sets $V_1,\dots,V_k$ are called \textbf{clusters}, $V_0$ is called the \textbf{exceptional set} and the vertices in $V_0$ are called \textbf{exceptional vertices.}  For convenience,   write $\boldsymbol{p_{(i,j)}(D)}:=d_{D'}(V_i,V_j)$ for each $i\ne j$.
Thus $p_{(i,j)}(D)=0$ or $p_{(i,j)}(D)\ge d$, and because $D$ is oriented, there is
 $p_{(i,j)}(D)+p_{(j,i)}(D)\le1.$ For each part $V_i$, according to the location of $V_i$, define the \textbf{dominant weight}
\[
 w_i(D):=
 \begin{cases}
  \sum_{j\ne i}p_{(i,j)}(D),&V_i\subseteq P^+,\\
  \sum_{j\ne i}p_{(j,i)}(D),&V_i\subseteq P^-.
 \end{cases}
\tag{3.4}\label{eq:wi}
\]

Let $p$ be the number of clusters $V_i$ that $V_i\subseteq P^+$ and thus $q=k-p$ is the number of clusters $V_i$ that $V_i\subseteq P^-$. The total contribution to $\sum_{i=1}^k w_i$ from a  pair $(V_i,V_j)$ with $V_i, V_j\subseteq P^+$ or $V_i, V_j\subseteq P^-$ is at most $1$, while a  pair $(V_i,V_j)$ with $V_i\subseteq P^+$, $V_j\subseteq P^-$ contributes at most $2$.  Thus
\begin{equation}\label{weightsum}
 \sum_{i=1}^k w_i
 \le \binom p2+\binom q2+2pq
 =\binom k2+pq
 \le \frac34k^2-\frac k2.
\end{equation}
Similarly,   \begin{equation}\label{weightsum-connected}
\text{for any connected component $C$ of   $UG(R)$, } \sum_{i\in C}  w_i
 \le   \frac34{|C|}^2-\frac{|C|}{2},
\end{equation} because no edge between $C$ and $UG(R)\setminus C$.

Given clusters $V_1,\dots,V_k$ and a digraph $D'$, define \textbf{the reduced digraph} $R$ with parameters $(\varepsilon,d)$ to be the digraph whose vertex set is $[k]$ and in which $ij\in A(R)$ if and only if $(V_i,V_j)$ is a directed $(\varepsilon , d)$-regular pair in $D'$. (So if $D'$ is the pure digraph, then $ij\in A(R)$  if and only if there is an arc from $V_i$ to $V_j$ in $D'$.) Therefore, the underlying graph $UG(R)$ is a graph with vertex set $[k]$, and $ij\in E(UG(R))$ if and only if $p_{(i,j)}(D)+p_{(j,i)}(D)>0$.

\subsection{Basic Properties}
In this subsection, we present some basic properties of oriented graphs under dominant-degree conditions.

\begin{lemma}\label{lem:mass}
Every $n$-vertex oriented graph $D$ satisfies
\[
 \sum_{v\in V(D)} d_D^*(v)
 \le \binom n2+\left\lfloor\frac{n^2}{4}\right\rfloor
 \le \frac34n^2-\frac12n.
\label{eq:mass}
\]
\end{lemma}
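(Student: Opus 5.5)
We prove Lemma~\ref{lem:mass} with the same $P^+/P^-$ double-counting that gave \eqref{weightsum} at the cluster level, now applied to single vertices. Let $P^+$ and $P^-$ be as in \eqref{pm}, with $p:=|P^+|$ and $q:=|P^-|=n-p$. For $v\in P^+$ we have $d^*_D(v)=d^+_D(v)$, and for $v\in P^-$ we have $d^*_D(v)=d^-_D(v)$. Hence
\[
\sum_{v\in V(D)} d^*_D(v)=\sum_{v\in P^+}d^+_D(v)+\sum_{v\in P^-}d^-_D(v),
\]
and we bound the right-hand side by assigning each arc to the terms it contributes to.

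Fix an arc $uv\in A(D)$. It adds $1$ to $d^+_D(u)$, which is counted only if $u\in P^+$. It adds $1$ to $d^-_D(v)$, which is counted only if $v\in P^-$. So every arc contributes at most $2$, and it contributes $2$ only if one endpoint lies in $P^+$ and the other in $P^-$. If both endpoints lie in $P^+$, or both lie in $P^-$, then exactly one of the two increments is counted, so the arc contributes at most $1$.

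Since $D$ is oriented, each unordered pair $\{u,v\}$ carries at most one arc. Summing over pairs therefore gives
\[
\sum_{v} d^*_D(v)\le \binom p2+\binom q2+2pq=\binom n2+pq .
\]
By AM--GM, $pq\le\lfloor n^2/4\rfloor$, which proves the first inequality.

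For the second inequality,
\[
\binom n2+\left\lfloor\frac{n^2}{4}\right\rfloor\le \frac{n^2-n}{2}+\frac{n^2}{4}=\frac34n^2-\frac12n .
\]

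Every step here is elementary. The only point needing care is that an arc inside $P^+$ (or inside $P^-$) is counted at most once, because exactly one of its two endpoint-degree increments is a counted term. This is what reduces the naive bound $2\binom n2$ to $\binom n2+pq$.
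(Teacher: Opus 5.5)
Your proof is correct and matches the paper's argument: you split $\sum_v d^*_D(v)$ as $\sum_{v\in P^+}d^+_D(v)+\sum_{v\in P^-}d^-_D(v)$, then charge each arc at most $1$ when both ends lie in the same class and at most $2$ across classes, which gives $\binom n2+|P^+||P^-|\le\binom n2+\lfloor n^2/4\rfloor$. The floor comes from $pq\le n^2/4$ together with the integrality of $pq$, which you use implicitly.
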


\begin{proof}
By the definition of $d^*(v)$ and \eqref{pm}, it is easy to check that $d^*(v)=d^+(v)$ for each vertex $v\in P^+$ and $d^*(v)=d^-(v)$ for each vertex $v\in P^-$. Hence {$\sum_{v\in V(D)}d^*(v)=\sum_{v\in P^+}d^+(v)+\sum_{v\in P^-}d^-(v)$.} For each arc $uv\in A(D[P^+])$, since $u\in P^+$ and $v\in P^+$, the arc $uv$ contributes $1$ through the out-degree $d^+(u)$. As we do not accumulate in-degrees for vertices in $P^+$, it yields no additional contribution from $v$. The total contribution is therefore $1$. By symmetry, any arc inside $P^-$ contributes at most $1$. Similarly, an arc from $P^+$ to $P^-$ contributes two; an arc from $P^-$ to $P^+$ contributes zero.  Hence,
\[
 \sum_v d^*(v)
 \le \binom {|P^+|}2+\binom {|P^-|}2+2{|P^+||P^-|}
 =\binom n2+{|P^+||P^-|}
 \le\binom n2+\left\lfloor\frac{n^2}{4}\right\rfloor.
\]
as desired.
\end{proof}

We then derive the following consequences.

\begin{lemma}[Ore dominant-weight lemma]\label{lem:weightedore}
Suppose that $D$ is an oriented graph of sufficiently large order $n$  with
$\sigore(D)\ge\left(\frac34+\mu\right)n$. Let $V_0,V_1,\ldots,V_k$ and $D'$ be the partition and pure digraph obtained by Lemma \ref{lem:regularity} with $D$, respectively. Let $R$ and $w_i(D)$ be the corresponding reduced digraph and dominate wight defined as above. Then the following hold.
\begin{enumerate}[label=\rm(\roman*)]
\item $\displaystyle \min_{v\in V(D)}d_D^*(v)\ge \mu n/3$;
\item $\displaystyle \min_{i\in[k]}w_i(D)\ge  \mu k/4$;
\item if $C$ is a component of $UG(R)$, $r:=|C|$, and $X_C:=\cup_{i\in C}V_i$, then
$|X_C|\ge \mu n/5$,
and the set
\[
L_C:=\left\{x\in X_C:d_D^*(x)\le\frac34|X_C|+\frac\mu8n\right\}
\]
satisfies $|L_C|\ge \mu^2n/100$.
\end{enumerate}
\end{lemma}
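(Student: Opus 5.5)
The plan is a double-counting argument against Lemma~\ref{lem:mass}, applied once to $D$ and once to $D'$ restricted to a component. Throughout, assume (as is implicit in the definition \eqref{eq:wi} of $w_i$) that Lemma~\ref{lem:regularity} is applied with $\mathcal Q=\{P^+,P^-\}$, so every cluster lies in $P^+$ or in $P^-$. The parameters are chosen with $0<\eps\ll d\ll\mu$, and we may assume $\mu$ is small, say $\mu\le 1/10$.

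\emph{Step (i).} If the underlying graph of $D$ is complete, then $d^+(v)+d^-(v)=n-1$ for every $v$. Hence $d^*(v)\ge (n-1)/2\ge\mu n/3$.

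Otherwise, suppose some $v$ has $d^*(v)<\mu n/3$.
\begin{itemize}
\item Since $d^*(v)\ge\frac12\bigl(d^+(v)+d^-(v)\bigr)$, $v$ has fewer than $2\mu n/3$ neighbours.
\item So $v$ has at least $n-1-2\mu n/3$ non-neighbours $y$.
\item By the Ore condition, each such $y$ has $d^*(y)\ge(\frac34+\mu)n-d^*(v)>(\frac34+\frac{2\mu}3)n$.
\end{itemize}
Summing over these non-neighbours gives
\[
\sum_u d^*(u)\ >\ \bigl(1-\tfrac{2\mu}{3}\bigr)\bigl(\tfrac34+\tfrac{2\mu}{3}\bigr)n^2-O(n)
=\bigl(\tfrac34+\tfrac{\mu}{6}-\tfrac{4\mu^2}{9}\bigr)n^2-O(n).
\]
This exceeds $\frac34n^2$ for small $\mu$ and large $n$, contradicting Lemma~\ref{lem:mass}.

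\emph{Step (ii).} Fix $i$ and assume $V_i\subseteq P^+$; the other case is symmetric with in-degrees. Since $V_0$ is isolated in $D'$ and $D'[V_i]$ is empty,
\[
\sum_{v\in V_i}d^+_{D'}(v)=\sum_{j\ne i}p_{(i,j)}(D)\,m^2=w_i(D)\,m^2.
\]
For $v\in V_i\subseteq P^+$ we have $d^+_D(v)=d^*_D(v)\ge\mu n/3$ by (i). Lemma~\ref{lem:regularity}(iii) then gives $d^+_{D'}(v)\ge(\mu/3-d-2\eps)n$. Hence $w_i m\ge(\mu/3-d-2\eps)n\ge(\mu/3-d-2\eps)km$, and so $w_i\ge\mu k/4$ because $d,\eps\ll\mu$.

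\emph{Step (iii).} For $x\in X_C$, all $D'$-neighbours of $x$ lie in $X_C$, since there are no reduced arcs leaving $C$.
\begin{itemize}
\item \emph{Size of $X_C$.} We have $|X_C|\ge d^*_{D'}(x)\ge\mu n/3-(d+2\eps)n\ge\mu n/5$.
\item \emph{Upper bound on the degree sum.} Apply Lemma~\ref{lem:mass} to the oriented graph $D'[X_C]$, whose out- and in-degrees coincide with those in $D'$. Combined with Lemma~\ref{lem:regularity}(iii), this gives
\[
\sum_{x\in X_C}d^*_D(x)\le\tfrac34|X_C|^2+(d+2\eps)n|X_C|.
\]
\item \emph{Lower bound if $L_C$ is small.} If $|L_C|<\mu^2n/100$, then
\[
\sum_{x\in X_C}d^*_D(x)\ >\ \bigl(|X_C|-\tfrac{\mu^2n}{100}\bigr)\bigl(\tfrac34|X_C|+\tfrac{\mu}{8}n\bigr)
\ \ge\ \tfrac34|X_C|^2+\tfrac{\mu}{8}n|X_C|-\tfrac{3\mu^2}{400}n|X_C|-\tfrac{\mu^3}{800}n^2 .
\]
\end{itemize}
Using $|X_C|\ge\mu n/5$, the two negative error terms are together at most about $\frac{3\mu^2}{400}n|X_C|+\frac{\mu^2}{160}n|X_C|$, which is far below $\frac{\mu}{8}n|X_C|-(d+2\eps)n|X_C|$. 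This contradicts the upper bound, so $|L_C|\ge\mu^2n/100$.

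The main obstacle is step (i). A direct use of the Ore condition alone gives a useless (even negative) lower bound on $d^*(v)$. The key point is to combine it with the global mass bound of Lemma~\ref{lem:mass}, which requires a vertex of small dominant degree to have many non-neighbours. The remaining steps are routine bookkeeping with the regularity parameters.
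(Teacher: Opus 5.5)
Your proof is correct and follows essentially the same route as the paper. In (i) you double-count the dominant degrees of the non-neighbours of a low-degree vertex against Lemma~\ref{lem:mass}; in (ii) you lift vertex degrees to the weights $w_i$ via Lemma~\ref{lem:regularity}(iii); and in (iii) you compare an upper bound on $\sum_{x\in X_C}d^*_D(x)$ with the lower bound forced by a small $L_C$. The differences are minor: you bound $|X_C|\ge d^*_{D'}(x)$ directly rather than via $w_i\le r$, and you apply Lemma~\ref{lem:mass} to $D'[X_C]$ instead of using the cluster-level bound \eqref{weightsum-connected}, which is slightly cleaner and more explicit than the paper's sketch.
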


\begin{proof}
Let $a:=\min_{v\in V(D)}d_D^*(v)$ and choose $v$ with $d_D^*(v)=a$. Since $D$ is oriented, it follows that
$d_D^+(v)+d_D^-(v)\le2a$,
so $v$ has at least $n-1-2a$ non-neighbours in $UG(D)$. Every such non-neighbour $u$ satisfies
$d_D^*(u)\ge\left(\frac34+\mu\right)n-a$.
If $2a\ge(3/4+\mu)n$, then (i) is immediate. Otherwise,
\begin{align*}
\sum_{x\in V(D)}d_D^*(x)
&\ge na+(n-1-2a)\left(\left(\frac34+\mu\right)n-2a\right).
\end{align*}
Suppose that $a<\mu n/3$ and put $t:=a/n$, then $t<\mu/3$, we obtain
\[
\frac1{n^2}\sum_xd_D^*(x)
\ge \frac34+\mu-\left(\frac52+2\mu\right)t+4t^2-o(1)
>\frac34,
\]
where the last inequality follows from $0<\mu\le10^{-2}$ and $n$ sufficiently large. This contradicts Lemma~\ref{lem:mass}. Hence (i) holds.

For (ii), set $m=(n-|V_0|)/k$. Take $x\in V_i$ with normalized degree $\frac{d_D^{*}(x)}{n}$. Subtracting the reduced digraph density error $d$ and the regularization error $2\varepsilon$ yields the average effective contribution of $x$ towards the clusters. Since  $|D|=n$ and $|R|=k$, we obtain
\begin{equation}\label{eq:wlift}
w_i(D)\ge k\left(\frac{\min_{x\in V_i}d_D^*(x)}n-d-2\varepsilon\right).
\end{equation}
By (i) and $d,\varepsilon\ll\mu$, this yields
$w_i(D)\ge\left(\mu/3-d-2\varepsilon\right)k\ge \mu k/4$
for every $i\in [k]$.

For (iii), let $C$ be a component of $UG(R)$ of order $r$. Since there is no edge between $C$ and $UG(R)\setminus C$, we have $w_i(D)\le r$ for every vertex $i\in C$. By (ii),
$r\ge\frac\mu k/4$,
and therefore
\[
|X_C|=rm\ge\frac\mu4km=\frac\mu4(n-|V_0|)\ge\frac\mu5n.
\]
{Moreover, by the definition of $w_i(D)$,
\[
\sum_{x\in X_C}d_D^*(x)
\le m^2\sum_{i\in C}w_i(D)+|X_C|(d+2\varepsilon)n.
\]}
Using  \eqref{weightsum-connected}, and writing $h:=|X_C|$ and $\delta:=d+2\varepsilon$, we obtain
$\sum_{x\in X_C}d_D^*(x)
\le h\left(\frac34h+\delta n\right).$
Let
\[
L_C=\left\{x\in X_C:d_D^*(x)\le\frac34h+\frac\mu8n\right\}.
\]
Then
\[
(h-|L_C|)\left(\frac34h+\frac\mu8n\right)
\le h\left(\frac34h+\delta n\right),
\]
whence, since $\delta\le\mu/16$,
\[
|L_C|\ge \frac{(\mu/8-\delta)n}{(3/4)h+(\mu/8)n}h
\ge\frac\mu{16}h\ge\frac{\mu^2}{80}n>\mu^2n/100,\] as required.
\end{proof}

\begin{lemma}\label{lem:connected}
Suppose $D$ is an oriented graph of order $n\ge n_0$ satisfying
$\sigore(D)\ge\left(\frac34+\mu\right)n$ and $R$ is the reduced digraph of $D$.
 Then   $UG(R)$ is connected.
\end{lemma}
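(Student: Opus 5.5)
The plan is to argue by contradiction using Lemma~\ref{lem:weightedore}(iii). If $UG(R)$ is disconnected, we will find two vertices lying in different components that are non-adjacent in $D$ and both have small dominant degree. This violates the Ore-type condition $\sigore(D)\ge(\frac34+\mu)n$. We work with the constants of the regularity setup, so that $d,\varepsilon\ll\mu$ as in the proof of Lemma~\ref{lem:weightedore}.

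Suppose $UG(R)$ has two distinct components $C_1,C_2$, and put $X_{C_j}:=\bigcup_{i\in C_j}V_i$. These sets are disjoint, so $|X_{C_1}|+|X_{C_2}|\le n$. By Lemma~\ref{lem:weightedore}(iii), each set
\[
L_{C_j}=\Bigl\{x\in X_{C_j}: d_D^*(x)\le \tfrac34|X_{C_j}|+\tfrac{\mu}{8}n\Bigr\}
\]
has size at least $\mu^2 n/100$.

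The key step is to show that some $x\in L_{C_1}$ and $y\in L_{C_2}$ are non-adjacent in $D$. Since $C_1$ and $C_2$ are different components, no pair $(V_i,V_j)$ with $i\in C_1$ and $j\in C_2$ is an arc of $R$ in either direction. By the definition of $R$ via the pure digraph, $D'$ therefore has no arcs between $X_{C_1}$ and $X_{C_2}$. Every $D$-arc between these sets thus lies in $A(D)\setminus A(D')$. By Lemma~\ref{lem:regularity}(iii), each vertex is incident with at most $2(d+2\varepsilon)n$ such arcs.

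Now fix any $x\in L_{C_1}$. It has at most $2(d+2\varepsilon)n$ neighbours in $X_{C_2}$. Since $d,\varepsilon\ll\mu$, we have $2(d+2\varepsilon)n<\mu^2n/100\le|L_{C_2}|$. Hence some $y\in L_{C_2}$ satisfies $xy,yx\notin A(D)$.

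For this pair we obtain
\[
d_D^*(x)+d_D^*(y)\le \tfrac34\bigl(|X_{C_1}|+|X_{C_2}|\bigr)+\tfrac{\mu}{4}n\le\Bigl(\tfrac34+\tfrac{\mu}{4}\Bigr)n<\Bigl(\tfrac34+\mu\Bigr)n,
\]
which contradicts $\sigore(D)\ge(\frac34+\mu)n$. Hence $UG(R)$ is connected.

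The only delicate point is the non-adjacency step. It requires that absence of an arc in $R$ means density $0$ in $D'$, which holds because every pair in $D'$ is $\varepsilon$-regular with density $0$ or at least $d$. It also requires that the regularity constants are small enough relative to $\mu^2$ that the edges deleted in passing from $D$ to $D'$ cannot cover all of $L_{C_2}$. Both facts are guaranteed by the choice of constants in Lemmas~\ref{lem:regularity} and~\ref{lem:weightedore}.
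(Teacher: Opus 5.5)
Your proposal is correct and follows the paper's proof essentially step for step. Both arguments assume two components, take the low-degree sets $L_{C_1},L_{C_2}$ from Lemma~\ref{lem:weightedore}(iii), and bound the cross-component $D$-neighbourhood of any vertex by $2(d+2\varepsilon)n$ via Lemma~\ref{lem:regularity}(iii); since this is below $|L_{C_2}|\ge\mu^2n/100$ when $d,\varepsilon\ll\mu^2$, you get a non-adjacent pair with $d_D^*(x)+d_D^*(y)\le(\frac34+\frac\mu4)n$, which contradicts the Ore condition.
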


\begin{proof}
Suppose that $UG(R)$ is disconnected. Choose two distinct components $C_1,C_2$,  set
$X_t:=\cup_{i\in C_t}V_i$,  and $h_t:=|X_t|$ for each $t\in [2]$.
By Lemma~\ref{lem:weightedore}(iii), $h_t\ge\mu n/5$ and
\[
L_t:=\left\{x\in X_t:d_D^*(x)\le\frac34h_t+\frac\mu8n\right\}
\]
has size at least $\mu^2n/100$.

Since $C_1$ and $C_2$ are distinct components of $UG(R)$, the pure digraph $D'$ of $D$ contains no arc between $X_1$ and $X_2$. Hence, for every $x\in X_1$, we have
$d_D^+(x,X_2)+d_D^-(x,X_2)\le2(d+2\varepsilon)n.$
By $d,\varepsilon\ll\mu^2$,
$|L_2|>2(d+2\varepsilon)n.$
Thus, fixing any $x\in L_1$, there exists $y\in L_2$ non-adjacent to $x$. The condition $\sigore(D)\ge\left(\frac34+\mu\right)n$ gives
\[
d_D^*(x)+d_D^*(y)\ge\left(\frac34+\mu\right)n.
\]
On the other hand,
\[
d_D^*(x)+d_D^*(y)
\le\frac34(h_1+h_2)+\frac\mu4n
\le\left(\frac34+\frac\mu4\right)n,
\]
a contradiction. Hence $UG(R)$ is connected.
\end{proof}

\section{Proof of Theorem \ref{thm:main}}
In this section, we adopt the following conventions.

\noindent \textbf{Choice of parameters.}
Fix $\ell$ and $\mu$.   Choose $\eta,d, \varepsilon,M_0$ such that  $0<\eta\ll\mu$,  $0<d\ll\mu^2$, $0<\varepsilon\ll d^4,\mu^2,1/\ell$ and $\mu \ll 3< M_0$.
Apply Lemma~\ref{lem:regularity} with the initial partition  $(P^+,P^-)$  and lower bound $M_0$, and let $M$ be the resulting upper bound on the number of clusters.
Apply Lemma~\ref{lem:transferral} with parameters $(3\varepsilon,d/2,2M)$ and let $\nu$ be the resulting robustness constant.
Choose $\beta$ for Lemma~\ref{lem:clusterclosed} so that  $0<\beta\ll\nu,d^2/M$. Set $\tau:=\beta/4^{M+2}$. {Let $T=T(h,L)$ be the integer yielded by at most $M-1$ iterative applying Lemma~\ref{lem:merge}.}
Choose $\rho$ with $\beta,\tau\ll\rho\ll\nu,\mu$. {Apply Lemma~\ref{lem:absorber} with $(\eta,\tau,T)$ and thus obtain a constant  $\xi_0$; set $\xi:=\xi_0/2$.} Finally choose $\zeta\ll\xi$, choose the auxiliary almost-covering regularity constants $\varepsilon_a, d_a, M_{0,a}$ such that $1/M_{0,a}\ll \varepsilon_a\ll d_a\ll\min\{\zeta,\mu^2\}$,  and let $M_a=M(\varepsilon_a,d_a,M_{0,a},2)$ be the corresponding regularity upper bound by applying Lemma \ref{lem:regularity} with some initial partition and $(\varepsilon_a, d_a, M_{0,a})$.
Set
\[
M_a^\star:=\left\lceil\frac{2M_a}{\zeta}\right\rceil,
\qquad
\widehat\varepsilon_a:=\max\left\{2\varepsilon_a,\frac{\varepsilon_a}{\zeta}\right\},
\]
and choose $\varepsilon_a$ sufficiently small that $\widehat\varepsilon_a\le d_a/16$. Denote $\gamma_{2.2}$ to be the constant obtained by  Lemma~\ref{lem:transferral} with  $(\widehat\varepsilon_a,d_a/2,M_a^\star)$, then choose $\gamma>0$ so small that  {$2\gamma \ll \gamma_{2.2}, d^2(1-\eps)/(10M\kappa_\ell)$.} Take $n_0$ sufficiently large.

\noindent \textbf{Conventions for the proof.} Suppose $D$ is an oriented graph of order $n\ge n_0$ satisfying {$\sigore(D)\ge(\frac34+\mu)n$} and $\alpha(D)\le\gamma n$. Apply Lemma~\ref{lem:regularity} with $\mathcal Q:=(P^+,P^-)$  and parameters $\varepsilon,d,M_0$. Write
\[
V(D)=V_0\mathbin{\cup}V_1\mathbin{\cup}\cdots\mathbin{\cup}V_k,
\qquad |V_1|=\cdots=|V_k|=m,
\]
let $D'$ be the pure digraph, and let $R^0$ be its reduced digraph, and let $R:=UG(R^0)$.

\subsection{Preliminary tiling}
By Lemma~\ref{lem:weightedore}(ii), for each cluster $V_i$, there exists some cluster $V_{f(i)}$ with $f(i)\neq i$ defining a mapping $f\colon i\mapsto f(i)$, such that
\[
p_{(i,f(i))}\ge d\quad\text{whenever }V_i\subseteq P^+,\qquad
p_{(f(i),i)}\ge d\quad\text{whenever }V_i\subseteq P^-.
\]
This means that $(V_i,V_{f(i)})$ or $(V_{f(i)},V_i)$ is a directed $(\eps , d)$-regular pair. Together with \eqref{2.1}, there are some bad vertices in the pair. So define
\[
 B_i:=\begin{cases}
     \{x\in V_i:d_{D'}^+(x,V_{f(i)})<(p_{(i,f(i))}-\eps)m\}, &\text{if }V_i\subseteq P^+;\\
\{x\in V_i:d_{D'}^-(x,V_{f(i)})<(p_{(f(i),i)}-\eps)m\}, &\text{if }V_i\subseteq P^-;
 \end{cases}
\]
  By the $\eps$-regularity of pair $(V_i,V_{f(i)})$,
 $|B_i|\le\eps m$ for each $i\in [k]$.
Set $E_{bad}:=V_0\cup\bigcup_iB_i.$
Then $|E_{bad}|\le2\eps n$. For the set $E_{\mathrm{bad}}$, our goal is to find a $\widetilde{C}$-tiling $\mathcal{F}_E$ of size $|E_{\mathrm{bad}}|$ such that each $\widetilde{C}_i\in\mathcal{F}_E$ satisfies $|V(\widetilde{C}_i)\cap E_{\mathrm{bad}}|=1$ and $E_{\mathrm{bad}}\subseteq V(\mathcal{F}_E)$. We achieve this greedily by  Lemma \ref{lem:pretil}.

\begin{lemma}[Exceptional tiling]\label{lem:pretil}
The digraph $D$ has a $\Cvec$-tiling $\mathcal F_E$ satisfying
\begin{enumerate}[label=\rm(\roman*)]
\item every copy in $\mathcal F_E$ contains exactly one vertex of $E_{\rm bad}$ and $E_{\rm bad}\subseteq V(\mathcal F_E)$;
\item $|V(\mathcal F_E)|=\ell|E_{\rm bad}|\le2\ell\varepsilon n\le\sqrt\varepsilon n$;
\item {for every $i\in[k]$, $|V(\mathcal F_E)\cap V_i|\le 3\sqrt\varepsilon\,m$ for sufficiently large $n$.}
\end{enumerate}
\end{lemma}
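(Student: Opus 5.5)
Enumerate $E_{\rm bad}=\{u_1,\dots,u_b\}$ with $b\le 2\varepsilon n$ and build $\mathcal F_E$ greedily. At step $j$ we will have vertex-disjoint copies $\Cvec_1,\dots,\Cvec_{j-1}$, each containing exactly one vertex of $E_{\rm bad}$. Set
\[
W_j:=V(\Cvec_1)\cup\cdots\cup V(\Cvec_{j-1})\cup E_{\rm bad}\cup Z,
\]
where $Z$ is the set of \emph{saturated} vertices, namely the clusters $V_i$ that already meet the current tiling in at least $2\sqrt\varepsilon m$ vertices. Then $|W_j|\le \ell b+b+|Z|\le (2\ell+2)\varepsilon n+|Z|$. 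Each copy uses $\ell$ vertices, so at most $\ell b/(2\sqrt\varepsilon m)\le \ell\sqrt\varepsilon k$ clusters are ever saturated, up to constants. Hence $|Z|\le \ell\sqrt\varepsilon n$, and therefore $|W_j|\le 2\ell\sqrt\varepsilon n$. This is much smaller than $\mu n/3$, since $\varepsilon\ll\mu^2,1/\ell$.

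To find $\Cvec_j$ through $u:=u_j$ avoiding $W_j$, use Lemma~\ref{lem:weightedore}(i): $d_D^*(u)\ge \mu n/3$. Suppose $u\in P^+$, so that $d^+(u)\ge\mu n/3$; the case $u\in P^-$ is symmetric, using a sink in place of a source. Let
\[
S:=N_D^+(u)\setminus W_j,
\]
so $|S|\ge \mu n/3-2\ell\sqrt\varepsilon n\ge \mu n/4$. By Fact~\ref{fact1},
\[
\chi(UG(D[S]))\ge |S|/\alpha(D)\ge \mu/(4\gamma)\ge \kappa_\ell,
\]
because $\gamma$ is tiny. By Lemma~\ref{lem:tree} applied with $k=\ell-1$, whose threshold is exactly $\kappa_\ell$, the digraph $D[S]$ contains a copy of the oriented path $\Cvec-s$, where $s$ is a source of $\Cvec$. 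Since $\Cvec$ is not directed, a source $s$ exists, and all arcs at $s$ point out of it. Mapping $s\mapsto u$ then yields a copy of $\Cvec$ containing $u$ and otherwise only vertices of $S$. These avoid $E_{\rm bad}$, the earlier copies, and the saturated clusters. This gives (i) and the disjointness.

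For (ii), $|V(\mathcal F_E)|=\ell b\le 2\ell\varepsilon n\le\sqrt\varepsilon n$ because $\varepsilon\ll 1/\ell^2$. For (iii), a cluster is only used while unsaturated, and each step adds at most $\ell$ vertices to it. So the final count in any $V_i$ is at most $2\sqrt\varepsilon m+\ell\le 3\sqrt\varepsilon m$ for large $n$; the bad vertices of $V_i$ contribute at most $\varepsilon m$, which is also absorbed into this bound. Vertices of $V_0$ lie in no cluster and are irrelevant here.

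The only delicate point is the bookkeeping behind (iii): checking that the saturated clusters remove only $O(\ell\sqrt\varepsilon n)$ vertices, so the neighbourhood $S$ still has linear size. The remaining steps are routine.
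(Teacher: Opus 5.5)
Your proposal is correct and follows essentially the same route as the paper: for each bad vertex you take the dominant neighbourhood of size at least $\mu n/3$, delete the used vertices, $E_{\rm bad}$ and the clusters already heavily hit by the tiling (threshold $2\sqrt\varepsilon m$ for you, $\sqrt\varepsilon m$ in the paper), and embed $\Cvec-s$ (or $\Cvec-t$) there via Fact~\ref{fact1} and Lemma~\ref{lem:tree}. Your bound $2\sqrt\varepsilon m+\ell+|B_i|$ on the final load of a cluster for (iii) matches the paper's bound $\sqrt\varepsilon m+\ell+|B_i|$.
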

\begin{proof}
Order $E_{\rm bad}=\{x_1,\dots,x_q\}$. Suppose $\Cvec$-copies through $x_1,\dots,x_{r-1}$ have already been chosen at step $r-1$. Let $F_{r-1}$ consist of the already used vertices together with $E_{\rm bad}\setminus\{x_r\}$. Note that $|F_r|\leq |F_q|$ for each step  $r\in [q]$ and $|F_q|\leq 2\ell\varepsilon n$. Let $\mathcal S_{r}:=\big\{V_i\,\big|\, |V_i\cap F_r|\ge \sqrt{\varepsilon} m\big\} $,  for each $r\in [q]$, and let $ t_r:=|\mathcal S_r|.$  Since $\bigcup_{V_i\in\mathcal S_{r-1}}(V_i\cap F_{r-1})\subseteq F_{r-1}$, we have $t_{r-1}\cdot \sqrt{\varepsilon} m  \le  |F_{r-1}|  <  2\ell\varepsilon n.$ Hence $t_{r-1}  <  \frac{2\ell\varepsilon n}{\sqrt{\varepsilon} m} =\frac{2\ell \sqrt{\varepsilon}  n}{m}.$ Each cluster $V_i$ has size $m$, so $ |\bigcup_{V_i\in\mathcal S_{r-1}} V_i| \;\le\; t_{r-1}\cdot m.$ Substituting the upper bound for $t_{r-1}$,   $ |\bigcup_{V_i\in\mathcal S_{r-1}} V_i | \;<\; \frac{2\ell\sqrt{\varepsilon}\,n}{m}\cdot m =2\ell\sqrt{\varepsilon}\,n.$

By Lemma~\ref{lem:weightedore}(i),
$\min_{v\in V(D)}d_D^*(v)\ge \frac\mu3n.$
If $d_D^*(x_r)=d_D^+(x_r)$, remove from $N_D^+(x_r)$ the set $F_{r-1}$ and the clusters in $\mathcal S_{r-1}$. By $\varepsilon\ll\mu^2,1/\ell^2$, {at least $\mu n/5$ vertices remain}. Their underlying graph has chromatic number at least $\kappa_\ell$, so Lemma~\ref{lem:tree} embeds $\Cvec-s$, where $s$ is a source of $\Cvec$; adding $x_r$ at $s$ gives the next copy. If $d_D^*(x_r)=d_D^-(x_r)$, use a sink and in-neighbours. Thus the copies are vertex-disjoint and each contains exactly one bad vertex. Since the vertices of $\Cvec$-copy $C_r$ at step $r$ are never selected from $\mathcal S_{r-1}$, it follows that each $V_i\in \mathcal S_{r-1}|$  satisfies $|V_i\cap F_{j}|\leq \sqrt{\eps}m+\ell+|B_i|$ for any $ j\geq r$; hence (iii) follows from $|B_i|\le\varepsilon m$ and $n$ large.
\end{proof}

Set $D_1:=D-\mathcal F_E$ and $U_i:=V_i\setminus V(\mathcal F_E)$. Lemma~\ref{lem:pretil}(iii) gives
$(1-3\sqrt\varepsilon)m\le |U_i|\le m$ for each $i\in[k]$.
Moreover, by the slicing lemma, for every arc $ij\in A(R^0)$, $(U_i,U_j)$ remains a $(3\varepsilon,d/2)$-regular directed pair, after increasing $n_0$ if necessary. Recall that $m=\frac{n-|V_0|}{k}\geq \frac{(1-\eps )n}{M}$. This gives that $ n\leq \frac{Mm}{1-\eps}$.

\subsection{Absorbing Lemma}\label{sec:closed}

\begin{lemma}\label{lem:clusterclosed}
Every $U_i$ is $(\Cvec,\beta n,2)$-closed.
\end{lemma}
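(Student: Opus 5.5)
Fix distinct $x,y\in U_i$ and a set $W\subseteq V(D)\setminus\{x,y\}$ with $|W|\le\beta n$. Since $t=2$, we need a set $Q$ with $|Q|\le 2\ell-1$, disjoint from $W\cup\{x,y\}$, such that $D[Q\cup\{x\}]$ and $D[Q\cup\{y\}]$ both have $\Cvec$-factors. Because $|Q|\le 2\ell-1$ and $\ell\mid |Q|+1$, we take $|Q|=2\ell-1$. We build $Q=S\cup T\cup\{z\}$ with $|S|=|T|=\ell-1$ so that $x\cup S$, $y\cup S$, $z\cup T$ and $x$-free\ldots{} More precisely, we aim for a common ``switch'' structure: find $z\in U_i\setminus(W\cup\{x,y\})$ together with disjoint $(\ell-1)$-sets $S,T$ such that $D[S\cup\{x\}]$, $D[S\cup\{z\}]$, $D[T\cup\{y\}]$ and $D[T\cup\{z\}]$ all contain $\Cvec$. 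Then $Q:=S\cup T\cup\{z\}$ works: for $x$ use $\{x\}\cup S$ and $\{z\}\cup T$, and for $y$ use $\{y\}\cup T$ and $\{z\}\cup S$.

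All vertices of $U_i$ are good, since the vertices of $B_i\subseteq E_{\rm bad}$ were removed. Suppose $V_i\subseteq P^+$; the case $V_i\subseteq P^-$ is symmetric, with in-neighbourhoods and a sink in place of out-neighbourhoods and a source. Then each $u\in U_i$ satisfies $d^+_{D'}(u,V_{f(i)})\ge (p_{(i,f(i))}-\varepsilon)m\ge (d-\varepsilon)m$. Let $Y:=U_{f(i)}\setminus W$. Since $|V(\mathcal F_E)\cap V_{f(i)}|\le 3\sqrt\varepsilon m$ and $\beta n\le \beta Mm/(1-\varepsilon)\ll dm$, each $u\in U_i$ has at least $dm/2$ out-neighbours in $Y$.

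The key step is finding $z$ whose out-neighbourhood in $Y$ overlaps substantially with those of $x$ and of $y$. Put $N_x:=N^+(x)\cap Y$ and $N_y:=N^+(y)\cap Y$, each of size at least $dm/2$. Apply \eqref{2.1} to the $\varepsilon$-regular pair $(V_i,V_{f(i)})$ with $Y':=N_x$ and with $Y':=N_y$; this is valid as $|N_x|,|N_y|\ge \varepsilon m$. All but at most $2\varepsilon m$ vertices $z\in V_i$ then satisfy $d^+(z,N_x)\ge(d-\varepsilon)|N_x|\ge d^2m/4$, and likewise for $N_y$. Since $|U_i\setminus (W\cup\{x,y\})|\ge m/2>2\varepsilon m$, such a $z$ exists. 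Set $A:=N^+(z)\cap N_x$ and $A':=N^+(z)\cap N_y\setminus$ (a reserved part), making them disjoint by splitting: if $|A\cap A'|$ is large, partition it into halves. This leaves two disjoint sets, each of size at least $d^2m/16\ge d^2(1-\varepsilon)n/(16M)$.

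Next, by Fact~\ref{fact1}, $\chi(UG(D[A]))\ge |A|/\alpha(D)\ge d^2(1-\varepsilon)/(16M\gamma)>\kappa_\ell$ by the choice of $\gamma$. Lemma~\ref{lem:tree} therefore gives a copy $S$ of the oriented path $\Cvec-s$ in $D[A]$, where $s$ is a source. Every vertex of $S$ is an out-neighbour of both $x$ and $z$, so putting $x$ (respectively $z$) at $s$ yields $\Cvec$; only the two arcs from $s$ are needed. The same argument gives $T$ inside $A'$, disjoint from $S$. This completes the construction.

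The main obstacle is the simultaneous common neighbourhood of $z$ with both $x$ and $y$. This needs regularity applied to the subsets $N_x,N_y$ of a single pair, which is why the bad vertices were removed in advance. We must also check that the density loss, $d^2$ rather than $d$, is still absorbed by $\gamma\ll d^2/(M\kappa_\ell)$ and $\beta\ll d^2/M$, which the parameter choices allow.
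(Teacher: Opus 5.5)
Your proof is correct and is essentially the paper's argument. The paper's auxiliary graph $J_i$ joins two vertices of $U_i$ with a common out-neighbourhood of size $\Omega(d^2m)$ in $V_{f(i)}$, which is exactly your condition on $z$; it finds $z$ as a common $J_i$-neighbour of $x,y$ via \eqref{2.1}, and sets $Q=Q_1\cup Q_2\cup\{z\}$ with $Q_1,Q_2$ copies of $\Cvec-s$ embedded in the two common out-neighbourhoods through Fact~\ref{fact1} and Lemma~\ref{lem:tree}, just as you do. The differences are cosmetic: the paper embeds the second path after deleting the first instead of halving the overlap (simpler than your splitting step), and treats $J_i$-adjacent pairs directly with $|Q|=\ell-1$.
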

\begin{proof}
We give the $P^+$ case; the $P^-$ case follows after reversing all arcs. Put $X=V_i$ and $Y=V_{f(i)}$, and write $p=p_{(i,f(i))}/3\ge d/3$. Define the auxiliary graph $J_i$ as follows.  For $x,z\in U_i$, $xz\in E(J_i)$ if and only if
\[
|N_{D'}^+(x,Y)\cap N_{D'}^+(z,Y)|\ge(p-\varepsilon)^2m.
\]
Since $U_i\cap B_i=\varnothing$, regularity implies that each $x\in U_i$ has at most $2\varepsilon m$ non-neighbours in $J_i$ before the cleaning; Lemma~\ref{lem:pretil}(iii) therefore gives at most $4\sqrt\varepsilon m$ non-neighbours in $J_i[U_i]$ for $n$ large.

{First let $W\subseteq V(D_1)$ with $|W|\le2\beta n$.} If $xz\in E(J_i)$ and $x,z\notin W$, then
\[
\bigl|(N_{D'}^+(x,Y)\cap N_{D'}^+(z,Y))\setminus(V(\mathcal F_E)\cup W)\bigr|\ge [(p-\varepsilon)^2-3\sqrt\varepsilon-3\beta M]m
\]
this is because that $|V(\mathcal F_E)\cap Y|\le3\sqrt\varepsilon m$ from Lemma~\ref{lem:pretil} (iii) and $|W|\le2\beta n\le3\beta M m$.  By the choice of $\gamma$,
the chromatic number of the underlying graph induced by this common neighbourhood in $D$ is  at least $$\frac{((p-\varepsilon)^2-3\sqrt\varepsilon-3\beta M)m}{\gamma n}\geq \frac{((p-\varepsilon)^2-3\sqrt\varepsilon-3\beta M)m}{ \gamma Mm/(1-\eps)}\geq \kappa_\ell$$ so Lemma~\ref{lem:tree} gives a copy $Q$ of $\Cvec-s$, where $s$ is a source. Hence both $Q\cup\{x\}$ and $Q\cup\{z\}$ are copies of $\Cvec$: every edge of $J_i[U_i]$ is a {$(\Cvec,2\beta n,1)$-reachable} pair.

{Now let $W_0\subseteq V(D_1)$ with $|W_0|\le\beta n$.} For $x,y\in U_i$ with $xy\notin E(J_i)$, there is a vertex $z\in N_{J_i}(x)\cap N_{J_i}(y)\setminus W_0$ as $x,y\notin B_i$, $|W_0|\le\beta n$ and $(U_i,U_{f(i)})$ is $(3\varepsilon,d/2)$-regular directed pair. {Put $W_1:=W_0\cup\{y\}$ and use $(\Cvec,2\beta n,1)$-reachability of $xz$ to get a connector $Q_1$ avoiding $W_1$. Next put $W_2:=W_0\cup\{x\}\cup V(Q_1)$ and use $(\Cvec,2\beta n,1)$-reachability of $zy$ to get a connector $Q_2$ avoiding $W_2$. For sufficiently large $n$, both $|W_1|$ and $|W_2|$ are at most $2\beta n$. Hence $Q_1,Q_2,\{x,y,z\}$ are mutually disjoint in the required way. With $Q:=Q_1\cup Q_2\cup\{z\}$, the digraph $D_1[Q\cup\{x\}]$ has the two $\Cvec$-factors supplied by $Q_1\cup\{x\}$ and $Q_2\cup\{z\}$, while $D_1[Q\cup\{y\}]$ has those supplied by $Q_1\cup\{z\}$ and $Q_2\cup\{y\}$. Moreover $|Q|\le2\ell-1$.} Thus $U_i$ is $(\Cvec,\beta n,2)$-closed.
\end{proof}

Call two indexes $i,j$ \textbf{even-related} if there is an even-length walk in $R$ joining $i$ and $j$. {Since $R$ is connected (Lemma \ref{lem:connected}), it is easy to check that any two indexes $i,j$ are even-related if $R$ is non-bipartite and if $R$ is bipartite with $A_R,B_R$, then any two indexes $i,j$ are even-related, where $i,j\in A_R$ or $i,j\in B_R$, i.e., there are exactly two classes.}

We now merge these clusters $U_1,\ldots,U_k$ by even-length walk.

\begin{lemma}\label{lem:evenmerge}
Either $V(D_1)$ is $(\Cvec,\tau n,T)$-closed, or there is a bipartition
$V(D_1)=\mathcal A\cup\mathcal B$ such that $\mathcal A$ and $\mathcal B$ are $(\Cvec,4\tau n,T)$-closed. Moreover, if $V(D_1)=\mathcal A\cup\mathcal B$ such that $\mathcal A$ and $\mathcal B$ are $(\Cvec,4\tau n,T)$-closed, then there is a $(\Cvec,\rho)$-robust vector
$2\mathbf e_{\mathcal A}+(\ell-2)\mathbf e_{\mathcal B}$ with respect to the partition $(\mathcal A,\mathcal B)$.
\end{lemma}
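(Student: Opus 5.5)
The plan is to start from the closed clusters of Lemma~\ref{lem:clusterclosed} and merge them with Lemma~\ref{lem:merge} along edges of the connected graph $R$ (Lemma~\ref{lem:connected}). The transfer vectors come from Lemma~\ref{lem:transferral}. For every arc $ij\in A(R^0)$ the pair $(U_i,U_j)$ is $(3\varepsilon,d/2)$-regular with $|U_i|,|U_j|\ge n/(2M)$, so both $\mathbf e_i+(\ell-1)\mathbf e_j$ and $(\ell-1)\mathbf e_i+\mathbf e_j$ are $(\Cvec,\nu)$-robust for the partition $(U_1,\dots,U_k)$. Their difference is $(\ell-2)(\mathbf e_j-\mathbf e_i)$, which is not a unit transfer $\mathbf e_i-\mathbf e_j$ when $\ell>3$. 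This is exactly why one merges along walks of length two rather than single edges.

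Suppose $i$ and $j$ are both adjacent in $R$ to some index $c$. Lemma~\ref{lem:transferral} applied to the pairs $\{i,c\}$ and $\{j,c\}$ gives the robust vectors $\mathbf e_i+(\ell-1)\mathbf e_c$ and $\mathbf e_j+(\ell-1)\mathbf e_c$. Their difference is $\mathbf e_i-\mathbf e_j$. After coarsening the partition (merged classes simply add coordinates), Lemma~\ref{lem:merge} shows that the union of the classes containing $U_i$ and $U_j$ is closed. The parameters are halved at each step and $L$ is replaced by $L'(\ell,L)$.

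The merging runs as an iteration. At every stage the current partition consists of unions of clusters, each closed. If two classes contain indices joined by an even walk, then some consecutive pair along that walk, taken two steps at a time, lies in distinct classes, so a merge is available. There are at most $k-1\le M-1$ merges. Hence the final parameters are at least $\beta/2^{M}\ge 4\tau$ with threshold $T$, by the choice $\tau=\beta/4^{M+2}$ and the definition of $T$. The robustness $\nu$ exceeds every $\beta/2^t$, since $\beta\ll\nu$. The process ends with one class per even-relation class. By the remark before the lemma, there is one class if $R$ is non-bipartite and two classes $\mathcal A=\bigcup_{i\in A_R}U_i$ and $\mathcal B=\bigcup_{i\in B_R}U_i$ if $R$ is bipartite.

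Two details need separate handling. The first is that $V(D_1)=\bigcup U_i$, since $V_0\subseteq E_{\rm bad}$ has been removed. The second is that reachability must be inside $D_1$: the connectors produced by Lemma~\ref{lem:transferral} must avoid $V(\mathcal F_E)$. This follows by putting $V(\mathcal F_E)$, of size at most $\sqrt\varepsilon n$, into the forbidden set $W$. That requires $\nu$ relative to $\sqrt\varepsilon$, which I expect to be fine by choosing the parameters for the slices $U_i$ directly. In the non-bipartite case we get the first alternative of the lemma, with the closedness parameter $\ge\tau n$.

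For the robust vector in the bipartite case, take any edge $ab$ of $R$ with $a\in A_R$ and $b\in B_R$, which exists because $R$ is connected. Lemma~\ref{lem:transferral} gives copies of $\Cvec$ with one vertex in $\mathcal A$ and $\ell-1$ in $\mathcal B$, which is not the required profile $2\mathbf e_{\mathcal A}+(\ell-2)\mathbf e_{\mathcal B}$. This step is the main obstacle. My first attempt is to pick a vertex $x\in U_a$ whose dominant-direction neighbourhood meets both $U_b$ (via the regular pair) and, with the $\sigore$ condition and $\alpha(D)\le\gamma n$, also some vertex of $\mathcal A$ adjacent to a large part of $U_b$. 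I would then embed $\Cvec$ minus two adjacent vertices (a path on $\ell-2$ vertices) in a common neighbourhood inside $U_b$, using Lemma~\ref{lem:tree} and Fact~\ref{fact1}, and the two removed vertices lie in $\mathcal A$. Concretely, a non-directed $\Cvec$ has a vertex that is a source or a sink with neighbour $u$. I want $x,x'\in\mathcal A$ adjacent, placed on these two consecutive cycle vertices, with the remaining path in $N(x)\cap N(x')\cap\mathcal B$ in the correct directions. The existence of a dense regular pair inside $\mathcal A$, or else of many edges between cross pairs, would come from the degree condition: if $D[\mathcal A]$ were nearly empty then $\alpha(D)\ge|\mathcal A|-o(n)$, contradicting $\alpha(D)\le\gamma n$, since $|\mathcal A|\ge\mu n/5$. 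Robustness with parameter $\rho$ follows since $\rho\ll\nu,\mu$ and all sets involved have linear size.
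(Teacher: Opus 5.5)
Your merging half is correct and is essentially the paper's argument. It uses the transfer $\mathbf e_i-\mathbf e_j$ obtained from $\mathbf e_i+(\ell-1)\mathbf e_c$ and $\mathbf e_j+(\ell-1)\mathbf e_c$, at most $M-1$ applications of Lemma~\ref{lem:merge}, and the bound $\beta/2^M\ge4\tau$. One correction there: you cannot hide $V(\mathcal F_E)$ inside $W$, because $\nu\ll\varepsilon<\sqrt\varepsilon$. Instead, apply Lemma~\ref{lem:transferral} to $D_1$ itself, using the $(3\varepsilon,d/2)$-regular pairs $(U_i,U_j)$.

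The genuine gap is the second half. You do not prove that $2\mathbf e_{\mathcal A}+(\ell-2)\mathbf e_{\mathcal B}$ is $(\Cvec,\rho)$-robust, and the mechanism you sketch rests on a false premise. In the bipartite case $D'$ has no arcs inside $\mathcal A^0=\bigcup_{i\in A_R}V_i$, so every vertex has at most $2(d+2\varepsilon)n$ neighbours in $\mathcal A$. Thus $D[\mathcal A]$ really is nearly empty, and there is no dense regular pair inside $\mathcal A$. This does not contradict $\alpha(D)\le\gamma n$, which only gives an arc inside each set of more than $\gamma n$ vertices.

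Regularity of $(U_a,U_b)$ does not rescue the argument either. For a fixed $x$, it only says that at most $\varepsilon m$ vertices fail to share many out-neighbours with $x$ in $U_b$. So the ``bad partner'' relation has maximum degree at most $\varepsilon m$. A graph of that maximum degree can have independence number far below $\gamma n$, so small $\alpha$ does not force any arc of $D$ to join good partners. Your plan to put $x,x'$ on two consecutive cycle vertices also generally needs mixed directions. For example, in $C_{2s}^{\ad}$ every neighbour of a source is a sink, so you would need $N^+(x)\cap N^-(x')$ to be large.

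The missing idea is a global degree count, which is how the paper proceeds.
\begin{enumerate}
\item Since there are no $D'$-arcs inside $\mathcal A^0$ or $\mathcal B^0$, we have $d_D^*(x)\le|\mathcal B|+\theta n$ for $x\in\mathcal A$ and $d_D^*(y)\le|\mathcal A|+\theta n$ for $y\in\mathcal B$, where $\theta=2d+4\varepsilon+2\sqrt\varepsilon$.
\item Applying $\sigore(D)\ge(\frac34+\mu)n$ to a non-adjacent pair in $\mathcal B$, and then to one in $\mathcal A$, gives $(\frac38+\frac{2\mu}5)n\le|\mathcal A|,|\mathcal B|\le(\frac58-\frac{2\mu}5)n$.
\item Say $|\mathcal A\cap P^+|\ge|\mathcal A|/2$. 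The vertices of $\mathcal A\cap P^+$ with $d_D^*<(\frac38+\frac\mu3)n$ are pairwise adjacent by the Ore condition. They therefore form a clique in the sparse $D[\mathcal A]$, so there are at most $2(d+2\varepsilon)n+1$ of them.
\item Deleting these vertices and $W$ still leaves more than $\gamma n$ vertices, so there is an arc $u\to v$ between them.
\item Each of $u,v$ has at least $(\frac38+\frac\mu3-\theta)n$ out-neighbours in $\mathcal B$ within $D_1$. By pigeonhole against $|\mathcal B|\le(\frac58-\frac{2\mu}5)n$, their common out-neighbourhood in $\mathcal B\setminus W$ has at least $n/8$ vertices.
\item Embed $\Cvec-s$, with $s$ a source, in this common out-neighbourhood by Lemma~\ref{lem:tree}. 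Put $u$ at $s$ and $v$ at a source $r$ of $\Cvec-s$. Only out-arcs from $u$ and $v$ are needed, and $u\to v$ supplies the arc $s\to r$ if $s$ and $r$ are adjacent.
\end{enumerate}
This source--source placement also sidesteps the mixed-direction problem of your consecutive-vertex plan.
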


\begin{proof}
By Lemma~\ref{lem:clusterclosed}, every cluster $U_i$ is $(\Cvec,\beta n,2)$-closed.
On the edge $ij$, Lemma \ref{lem:transferral} gives a $(\Cvec,\beta)$-robust vector  $\ei_i+(\ell-1)\ei_j$; on $jh$ it gives a $(\Cvec,\beta)$-robust vector  $(\ell-1)\ei_j+\ei_h$.  Their difference is $\mathbf e_i-\mathbf e_h$.  Hence Lemma \ref{lem:merge} merges $U_i$ and $U_h$.  Note that this step merges only the $(\Cvec,\beta n,2)$-closed clusters  $U_i$ and $U_h$.  Iterating this operation along even walks yields the usual parity decomposition. Since $R$ is connected by Lemma~\ref{lem:connected}, if $R$ is non-bipartite, all clusters lie in one even-walk class and $V(D_1)$ is $(\Cvec,\tau n,T)$-closed. Thus it remains to consider the case that $R$ is bipartite.

Let $(A_R,B_R)$ be the bipartition of $R$, let
$\mathcal A:=\bigcup_{i\in A_R}U_i$, $B:=\bigcup_{j\in B_R}U_j$,
and let
$\mathcal A^0:=\bigcup_{i\in A_R}V_i$,
$\mathcal B^0:=\bigcup_{j\in B_R}V_j$.
Then we assert that $\mathcal A$ and $\mathcal B$ are $(\Cvec,4\tau n,T)$-closed. Since both  $R[A_R]$ and $R[B_R]$ have no edge, $D'$ contains no arc inside $\mathcal A^0$ or inside $\mathcal B^0$. Consequently,
\begin{equation}\label{A0}
    \text{for every $x\in\mathcal A^0$,  there is
$d_D^+(x,\mathcal A^0)+d_D^-(x,\mathcal A^0)\le2(d+2\varepsilon)n,$}
\end{equation}

and the analogous inequality holds for $\mathcal B^0$. Let $\theta:=2d+4\varepsilon+2\sqrt\varepsilon.$
Also, for every $x\in\mathcal A$, $N_{D'[\mathcal A^0]}(x)=\emptyset$, so
$d_D^*(x)\le |\mathcal B|+\theta n$.
Similarly, for each vertex $y\in\mathcal B$, there is
$d_D^*(y)\le |\mathcal A|+\theta n$.
Since $|\mathcal A|+|\mathcal B|=|D_1|\ge(1-\sqrt\varepsilon)n$, it follows that $|\mathcal A|\geq (1-\sqrt\varepsilon)n/2$ or $|\mathcal B|\geq (1-\sqrt\varepsilon)n/2$. W.l.o.g., assume that $|\mathcal B|\geq |\mathcal A|$.  {Then $|\mathcal B|\geq (1-\sqrt\varepsilon)n/2>2(d+2\varepsilon)n$, which means that there are two non-adjacent vertices  $x,y\in\mathcal B$ in $D$.} The condition  $\sigore(D)\ge(\frac34+\mu)n$ gives
\[
\left(\frac34+\mu\right)n
\le d_D^*(x)+d_D^*(y)
\le2|\mathcal A|+2\theta n,
\]
so
$|\mathcal A|\ge\left(\frac38+\frac\mu2-\theta\right)n>2(d+2\varepsilon)n$, and hence $D[\mathcal A]$ contains a non-adjacent pair $x',y'$. Applying the same argument to $x',y'$ gives
$|\mathcal B|\ge\left(\frac38+\frac\mu2-\theta\right)n$.
Taking $d,\varepsilon\ll\mu^2$, we may therefore assume
\begin{equation}\label{eq:ore-side-bounds}
\left(\frac38+\frac{2\mu}{5}\right)n
\le |\mathcal A|,|\mathcal B|
\le\left(\frac58-\frac{2\mu}{5}\right)n.
\end{equation}

Let $W\subseteq V(D_1)$ with $|W|\le\rho n$. By symmetry between $P^+$ and $P^-$, suppose first that
$|\mathcal A\cap P^+|\ge\frac{|\mathcal A|}{2}$.
Define
\[
L:=\left\{x\in\mathcal A\cap P^+:d_D^*(x)<\left(\frac38+\frac\mu3\right)n\right\}.
\]
The condition  $\sigore(D)\ge(\frac34+\mu)n$  implies that $L$ is a complete in $UG(D)$: otherwise two distinct non-adjacent vertices $x,y$ of $L$ would have   $d_D^*(x)+d_D^*(y)<(3/4+\mu)n$, which contradicts $\sigore(D)\ge(\frac34+\mu)n$. Together with \eqref{A0}, we obtain that
$|L|\le2(d+2\varepsilon)n+1.$

By \eqref{eq:ore-side-bounds} and the hierarchy $d,\varepsilon,\rho,\gamma\ll\mu$, we have
$\left|(\mathcal A\cap P^+)\setminus(L\cup W)\right|>\alpha(D).$
Thus this set contains an arc, say $u\to v$. In particular,
$d_D^*(u),d_D^*(v)\ge\left(\frac38+\frac\mu3\right)n.$
Since $u,v\in P^+$ and $D'[\mathcal A^0]$ contains no arcs, after deleting $\mathcal F_E$,  we have
\[
d_{D_1}^+(u,\mathcal B)\ge d_D^*(u)-\theta n,
\qquad
d_{D_1}^+(v,\mathcal B)\ge d_D^*(v)-\theta n.
\]
Together with \eqref{eq:ore-side-bounds},
\begin{align*}
|N_{D_1}^+(u,\mathcal B)\cap N_{D_1}^+(v,\mathcal B)|
\ge d_{D_1}^+(u,\mathcal B)+d_{D_1}^+(v,\mathcal B)-|\mathcal B|\ge\left(\frac18+\frac{16\mu}{15}-2\theta\right)n.
\end{align*}

Since $\theta,\rho\ll\mu$, the set
$S:=\bigl(N_{D_1}^+(u,\mathcal B)\cap N_{D_1}^+(v,\mathcal B)\bigr)\setminus W$ has size at least $n/8$. Choose a source $s$ of $\Cvec$ and put $P_s:=\Cvec-s$. Since $\alpha(D[S])\le\alpha(D)$, Lemma~\ref{lem:tree} embeds a copy $Q$ of $P_s$ in $D[S]$. Choose a source $r$ of $P_s$ and let $q_r$ be its image in $Q$. Replace $q_r$ by $v$ and place $u$ at $s$. If $s$ and $r$ are adjacent in $\Cvec$, the required arc is precisely $u\to v$. Since every vertex of $S$ is a common out-neighbour of $u$ and $v$, it follows that
$(V(Q)\setminus\{q_r\})\cup\{u,v\}$
spans a $\Cvec$-copy with exactly two vertices in $\mathcal A$ and $\ell-2$ vertices in $\mathcal B$, avoiding $W$.

If instead $|\mathcal A\cap P^-|\ge|\mathcal A|/2$, apply the identical argument with in-neighbourhoods: choose the arc $u\to v$ from the corresponding good set, choose a sink $t$ of $\Cvec$, put $P_t:=\Cvec-t$, and choose a sink $r$ of $P_t$. Embed $P_t$ in a common in-neighbourhood, replace the image of $r$ by $u$, and place $v$ at $t$; if $r$ and $t$ are adjacent, the required arc is $u\to v$. This again gives a copy with profile $2\mathbf e_{\mathcal A}+(\ell-2)\mathbf e_{\mathcal B}$. Since $W$ was arbitrary, this vector is $(\Cvec,\rho)$-robust.
\end{proof}

\begin{corollary}\label{cor:globalclosed}
The digraph $D_1$ is $(\Cvec,\tau n,L')$-closed.
\end{corollary}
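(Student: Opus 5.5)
The plan is to derive Corollary~\ref{cor:globalclosed} directly from Lemma~\ref{lem:evenmerge} by splitting into its two alternatives.

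\textbf{Non-bipartite case.} If the first alternative holds, then $V(D_1)$ is already $(\Cvec,\tau n,T)$-closed. Taking $L'\ge T$ gives the claim immediately. Here I use the fact that $(H,m,t)$-reachability is monotone: it is preserved when $m$ decreases and when $t$ increases.

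\textbf{Bipartite case.} Otherwise we have the bipartition $V(D_1)=\mathcal A\cup\mathcal B$ with both parts $(\Cvec,4\tau n,T)$-closed, and the $(\Cvec,\rho)$-robust vector $\mathbf p:=2\mathbf e_{\mathcal A}+(\ell-2)\mathbf e_{\mathcal B}$. I will apply Lemma~\ref{lem:merge} to the two-part partition $(\mathcal A,\mathcal B)$ of $D_1$, with $N:=|D_1|\ge(1-\sqrt\varepsilon)n$. For this I need a second robust vector $\mathbf q$ with $\mathbf p-\mathbf q=\mathbf e_{\mathcal A}-\mathbf e_{\mathcal B}$, that is, $\mathbf q=\mathbf e_{\mathcal A}+(\ell-1)\mathbf e_{\mathcal B}$.

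To obtain $\mathbf q$, I use that $R$ is connected (Lemma~\ref{lem:connected}), so some edge $ij$ of $R$ has $i\in A_R$ and $j\in B_R$. Suppose the arc is $ij\in A(R^0)$, so $(U_i,U_j)$ is a $(3\varepsilon,d/2)$-regular directed pair with $|U_i|,|U_j|\ge n/(2M)$; the case $ji\in A(R^0)$ is symmetric. Lemma~\ref{lem:transferral}, applied to the coarser partition $(\mathcal A,\mathcal B)$, then shows that both $\mathbf e_{\mathcal A}+(\ell-1)\mathbf e_{\mathcal B}$ and $(\ell-1)\mathbf e_{\mathcal A}+\mathbf e_{\mathcal B}$ are $(\Cvec,\nu)$-robust. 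The lemma only needs $X\subseteq V_i$ and $Y\subseteq V_j$, so it applies to any partition whose parts contain the relevant clusters. It must be run inside $D_1$, which is harmless since the copies produced avoid the deleted set $W$ anyway.

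\textbf{Matching parameters.} The hierarchy gives $\tau\ll\rho\ll\nu$. So both $\mathbf p$ and $\mathbf q$ are $(\Cvec,\beta')$-robust for $\beta':=4\tau n/N\le 5\tau$, after rescaling to the order $N$. Both parts are $(\Cvec,4\tau n,T)$-closed, that is, $(\Cvec,\beta' N,T)$-closed. Lemma~\ref{lem:merge} then makes $\mathcal A\cup\mathcal B=V(D_1)$ a $(\Cvec,\beta'N/2,L')$-closed set. Since $\beta' N/2=2\tau n\ge\tau n$, we get $(\Cvec,\tau n,L')$-closedness, where we take $L'=L'(\ell,T)\ge T$ to cover both cases.

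\textbf{Main obstacle.} The delicate step is the bookkeeping of constants. One has to check that the robustness parameters $\rho,\nu$ dominate the closedness parameter $4\tau$. One also has to check that the robustness in Lemma~\ref{lem:merge} refers to the same ambient graph $D_1$ rather than $D$; this is fine because deleting $V(\mathcal F_E)$ can be absorbed into the forbidden set $W$, at a cost of at most $\sqrt\varepsilon n\ll\nu n$.
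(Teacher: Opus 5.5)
Your proposal is correct and is essentially the paper's proof. It uses the same case split from Lemma~\ref{lem:evenmerge}, the same second vector $\mathbf e_{\mathcal A}+(\ell-1)\mathbf e_{\mathcal B}$ obtained from a crossing edge of $R$ via Lemma~\ref{lem:transferral}, and Lemma~\ref{lem:merge} applied inside $D_1$. The paper simply fixes $\beta=4\tau$ and uses monotonicity of closedness together with $2\tau|D_1|\ge\tau n$, rather than your $N$-dependent $\beta'$.

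One correction: your fallback claim that $V(\mathcal F_E)$ can be absorbed into $W$ because $\sqrt\varepsilon n\ll\nu n$ is false. Here $\nu$ is chosen after $\varepsilon$, and the proof of Lemma~\ref{lem:transferral} takes $\nu\ll\varepsilon/M$. The claim is not needed, though, since the regular pair $(U_i,U_j)$ lies entirely in $D_1$, so Lemma~\ref{lem:transferral} applies to $D_1$ directly, as you also say.
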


\begin{proof}
 Apply Lemma \ref{lem:evenmerge} for the digraph $D_1$, there is nothing to prove if $ V(D_1)$ already is $(\Cvec,\tau n,T)$-closed.  Hence, assume $V(D_1)$ has a bipartition $V(D_1)=\mathcal A\cup\mathcal B$ such that $\mathcal A$ and $\mathcal B$ are {$(\Cvec,4\tau n,T)$-closed} and there is a $(\Cvec,\rho)$-robust vector
 $2\mathbf e_{\mathcal A}+(\ell-2)\mathbf e_{\mathcal B}$ as $\rho \ll \nu$. Choose an edge $ij\in E(R)$ with $U_i\subseteq\mathcal A$ and $U_j\subseteq\mathcal B$ (such an edge exists because $R$ is connected).  {Recall that $(U_i,U_j)$ is a $(3\varepsilon,d/2)$-regular pair. Lemma~\ref{lem:transferral} with $\eps, d, M$, there is a $(\Cvec,\rho)$-robust vector}
 $\mathbf e_{\mathcal A}+(\ell-1)\mathbf e_{\mathcal B}$.
Their difference is exactly
 $\mathbf e_{\mathcal A}-\mathbf e_{\mathcal B}.$
{Apply Lemma~\ref{lem:merge} inside $D_1$ with $L:=T$ and $\beta:=4\tau$ to obtain that $D_1$ is $(\Cvec,2\tau |D_1|,L')$-closed. The fact  $|D_1|\ge n/2$ implies that   $2\tau|D_1|\ge\tau n$. Thus $D_1$ is $(\Cvec,2\tau |D_1|,L')$-closed.}
\end{proof}

Now we give the needed absorber lemma.

\begin{theorem}
[Ore Absorber]\label{cor:oreabsorber}
There is a set $A\subseteq V(D_1)$ with $|A|\le\eta n$ such that $D_1[A\cup U]$ has a $\Cvec$-factor whenever
$ U\subseteq V(D_1)\setminus A$ with
$ |U|\le\xi n$ and  $\ell\mid |A\cup U|$.
\end{theorem}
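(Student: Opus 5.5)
This theorem should follow directly from Corollary~\ref{cor:globalclosed} combined with the absorbing lemma, Lemma~\ref{lem:absorber}, applied to the digraph $D_1$ with $H=\Cvec$ and $h=\ell$. Corollary~\ref{cor:globalclosed} says that $V(D_1)$ is $(\Cvec,\tau n,L')$-closed, where $L'$ is the integer produced by the merging steps. The parameter choice fixes $T$ as the integer obtained from at most $M-1$ iterated applications of Lemma~\ref{lem:merge}, so it bounds $L'$. Since $(\Cvec,m,t)$-reachability is monotone in $t$ (a connector with $|Q|\le t\ell-1$ also satisfies $|Q|\le t'\ell-1$ for $t'\ge t$), we may treat $D_1$ as $(\Cvec,\tau n,T)$-closed. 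We are then exactly in the situation for which $\xi_0$ was chosen, namely Lemma~\ref{lem:absorber} with parameters $(\eta,\tau,T)$.

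The only technical point is normalisation. Lemma~\ref{lem:absorber} is stated for an $N$-vertex digraph, with closedness parameter $\beta N$ and output bounds $\eta N$ and $\xi_0 N$, whereas here $N:=|D_1|$ and the bounds are stated in terms of $n$. By Lemma~\ref{lem:pretil}(ii), $(1-\sqrt\varepsilon)n\le N\le n$.

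1. Closedness: $\tau n\ge\tau N$, and closedness only gets weaker as the forbidden set shrinks, so $V(D_1)$ is $(\Cvec,\tau N,T)$-closed.
2. Absorbing set: Lemma~\ref{lem:absorber}, applied with $N\ge n_0/2$ large, gives $A\subseteq V(D_1)$ with $|A|\le\eta N\le\eta n$.
3. Leftover set: for $U\subseteq V(D_1)\setminus A$ with $|U|\le\xi n=\xi_0n/2\le\xi_0(1-\sqrt\varepsilon)n\le\xi_0N$ and $\ell\mid|A\cup U|$, the lemma gives a $\Cvec$-factor of $D_1[A\cup U]$. This is exactly why $\xi$ was set to $\xi_0/2$.

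I expect the main obstacle to be bookkeeping rather than new mathematics. First, the integer $L'$ must be uniformly bounded independently of $n$, so that $\xi_0$ depends only on constants. It is: it comes from at most $M-1$ merges, each with $L'=L'(h,L)$, plus the one extra merge in Corollary~\ref{cor:globalclosed}. If necessary, I would enlarge $T$ to absorb that final merge before choosing $\xi_0$. Second, the absorbing lemma is stated for general digraphs, and $D_1$ is an oriented graph, so it applies verbatim.
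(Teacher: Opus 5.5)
Your proposal is correct and follows the paper's proof: you invoke Corollary~\ref{cor:globalclosed}, apply Lemma~\ref{lem:absorber} to $D_1$ with $|D_1|$ as the order, and use $\xi=\xi_0/2$ together with a lower bound on $|D_1|$ to convert $\xi_0|D_1|$ into $\xi n$. Your explicit handling of the reachability constant (making sure the absorber is invoked with a constant at least $L'$, fixed before $\xi_0$ is chosen) is slightly more careful than the paper, which chooses $\xi_0$ via $T$ but then applies the lemma with $L'$.
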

\begin{proof}
Let $n_1:=|D_1|$. By Corollary~\ref{cor:globalclosed}, $D_1$ is $(\Cvec,\tau n,L')$-closed, and hence in particular $(\Cvec,\tau n_1,T)$-closed. Apply Lemma~\ref{lem:absorber} to the oriented graph $D_1$ with parameters $(\eta,\tau,L')$. It yields a set $A\subseteq V(D_1)$ with $|A|\le\eta n_1\le\eta n$ which absorbs every $U\subseteq V(D_1)\setminus A$ with $|U|\le\xi_0 n_1$ and $\ell\mid |A\cup U|$. Since $\xi=\xi_0/2$ and $n_1\ge n/2$, we have $\xi n\le\xi_0n_1$.
\end{proof}
\subsection{Almost Covering Lemma}

Fix any subset $A\subseteq V(D_1)$ with $|A|\le\eta n$, and set $D_2:=D_1-A$ and $n_2:=|D_2|$. Let $r=n-n_2=|V(\mathcal F_E)|+|A|$. Since $r\le\sqrt\varepsilon n+\eta n\ll\mu n$, {for every two distinct non-adjacent vertices $x,y\in V(D_2)$, they are also non-adjacent in $D$, and hence}
\[
d_{D_2}^*(x)+d_{D_2}^*(y)\ge(3/4+\mu)n-2r\ge(3/4+\mu/2)n_2.
\]
Thus {$\sigore(D_2)\ge(3/4+\mu/2)n_2$}. Also $\alpha(D_2)\le\alpha(D)\le2\gamma n_2$. Define the dominant-direction partition for the residual digraph $D_2$ by $P_2^+:=\{v\in V(D_2):d_{D_2}^+(v)\ge d_{D_2}^-(v)\}$ and $P_2^-:=V(D_2)\setminus P_2^+$. Apply Lemma~\ref{lem:regularity} to $D_2$, refining {the partition formed by the nonempty members of $(P_2^+,P_2^-)$}. Let $V'_0,V'_1,\dots,V'_{k'}$ be the resulting partition, with common cluster size $m'$, pure digraph $D'_2$, and reduced digraph $R^0_2$. Set $R_2=UG(R^0_2)$. {By Lemma \ref{lem:connected}, $R_2$ is connected.}

For the graph $R_2$ on vertex set $[k]$, define the collection formed by pairs of index vectors associated with each edge of $R_2$:
\[
\mathcal{P}_\ell(R_2):=
\left\{
\mathbf e_i+(\ell-1)\mathbf e_j,\,
(\ell-1)\mathbf e_i+\mathbf e_j
\,\big|\, ij\in E(R_2)
\right\}.
\]
For every edge of $R_2$, the robust index-vector lemma yields a pair of $\tilde C$-copies realising the two index vectors in its associated pair, respectively.
Accordingly, we first obtain a fractional combination of the vectors from these vector pairs equal to $\mathbf 1_k$. We then greedily find an appropriate number of vertex-disjoint $\tilde C$-copies, which yields an almost-covering of the vertex set $V(D_2)$.

\begin{lemma}[Farkas' Lemma]\cite{farkas1902}
Let $\mathcal P=\{P_1,P_2,\dots,P_m\}\subseteq\mathbb R^k$ be a finite set of vectors. Exactly one of the following two statements holds.
\begin{enumerate}
    \item There exist nonnegative real scalars $\lambda_1,\lambda_2,\dots,\lambda_m\ge 0$ such that
    $\boldsymbol 1_k=\sum_{i=1}^m \lambda_i P_i.$

    \item There exists a vector $\boldsymbol y\in\mathbb R^k$ satisfying
    $\boldsymbol y^\top P_i \ge 0$ for all  $i=1,2,\dots,m$,
    and
    $\boldsymbol y^\top \boldsymbol 1_k<0.$
\end{enumerate}
\end{lemma}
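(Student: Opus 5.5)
The statement is the classical Farkas alternative for the cone generated by $\mathcal P$, and I would prove it by the standard separating-hyperplane argument. Put $K:=\{\sum_{i=1}^m\lambda_iP_i:\lambda_i\ge0\}$, the closed convex cone generated by $P_1,\dots,P_m$. Statement~1 says exactly that $\boldsymbol 1_k\in K$.

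The first step is to check that the two statements cannot hold simultaneously. If $\boldsymbol 1_k=\sum\lambda_iP_i$ with $\lambda_i\ge0$, and $\boldsymbol y^\top P_i\ge0$ for all $i$, then
\[
\boldsymbol y^\top\boldsymbol 1_k=\sum_i\lambda_i\,\boldsymbol y^\top P_i\ge0,
\]
so statement~2 fails. This direction is immediate.

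The second step is to show that at least one of them holds. Suppose $\boldsymbol 1_k\notin K$. I first verify that $K$ is closed. The route I would take is Carathéodory's theorem for cones: every point of $K$ is a nonnegative combination of a linearly independent subfamily of $\mathcal P$. For a linearly independent subfamily, the generated cone is the image of the closed orthant under an injective linear map, hence closed. Since there are finitely many subfamilies, $K$ is a finite union of closed sets and therefore closed.

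With $K$ closed, convex and nonempty (it contains $0$), I take $\mathbf z\in K$ nearest to $\boldsymbol 1_k$ in Euclidean norm and set $\boldsymbol y:=\mathbf z-\boldsymbol 1_k\neq0$. The projection inequality gives $\boldsymbol y^\top(\mathbf w-\mathbf z)\ge0$ for all $\mathbf w\in K$. Since $K$ is a cone, I can test $\mathbf w=0$ and $\mathbf w=2\mathbf z$, which gives $\boldsymbol y^\top\mathbf z=0$. Hence $\boldsymbol y^\top\mathbf w\ge0$ for all $\mathbf w\in K$, and in particular $\boldsymbol y^\top P_i\ge0$ for every $i$. Finally,
\[
\boldsymbol y^\top\boldsymbol 1_k=\boldsymbol y^\top\mathbf z-\boldsymbol y^\top\boldsymbol y=-\|\boldsymbol y\|^2<0,
\]
so statement~2 holds.

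The only non-routine point is the closedness of $K$, which I handle with the Carathéodory reduction described above. Alternatively, since this is a classical result \cite{farkas1902}, it could simply be cited rather than reproved.
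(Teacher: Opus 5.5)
Your proof is correct and complete. The exclusivity step is immediate. Closedness of $K$ via conic Carath\'eodory (a finite union of images of closed orthants under injective linear maps) is exactly the point that needs care. The projection step, with the tests $\mathbf w=0$ and $\mathbf w=2\mathbf z$, correctly yields $\boldsymbol y^\top\mathbf z=0$, hence $\boldsymbol y^\top P_i\ge0$ for all $i$ and $\boldsymbol y^\top\boldsymbol 1_k=-\|\boldsymbol y\|^2<0$.

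The paper itself gives no proof of this lemma. It simply invokes it as the classical result of \cite{farkas1902}, which is the alternative you mention at the end. So the only difference is that your version is self-contained, while the paper's citation is shorter. The citation suffices because the lemma is used only once, in the proof of Lemma~\ref{lem:profilehall}, where only one direction is needed: if $\boldsymbol 1_{k'}$ is not a nonnegative combination of the profile vectors, then a separating $\boldsymbol y$ exists. That is precisely your second step. Your argument also covers degenerate cases automatically, such as $\mathcal P$ empty or containing the zero vector.
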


\begin{lemma}\label{lem:profilehall}
There are coefficients $\lambda_P\ge0$, for each  $P\in\mathcal P_\ell(R_2)$,  such that
\[
\one_{k'}=\sum_{P\in\mathcal P_\ell(R_2)}\lambda_PP.
\]
\end{lemma}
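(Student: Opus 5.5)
We argue via Farkas' Lemma: assume there is $\mathbf y\in\mathbb R^{k'}$ with $\mathbf y^\top P\ge 0$ for every $P\in\mathcal P_\ell(R_2)$ and $\mathbf y^\top\one_{k'}<0$, and derive a contradiction. The plan has three steps: reduce the linear constraints to a weighted Hall-type inequality, prove that inequality from the Ore dominant-degree condition on $R_2$, and combine the two by a layer-cake integration.

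\emph{Step 1 (structure of $\mathbf y$).} For every edge $ij\in E(R_2)$ we have
\[
y_i+(\ell-1)y_j\ge0 \quad\text{and}\quad (\ell-1)y_i+y_j\ge0 .
\]
Let $N:=\{i:y_i<0\}$. If $i\in N$ and $ij\in E(R_2)$, the second inequality gives $y_j\ge (\ell-1)|y_i|>0$. In particular $N$ is an independent set of $R_2$. For $t>0$ put $S_t:=\{i:y_i<-t\}$ and $T_t:=\{j:y_j>(\ell-1)t\}$. Then $S_t$ is independent and $N_{R_2}(S_t)\subseteq T_t$.

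\emph{Step 2 (Hall-type inequality).} We claim that every nonempty independent set $S$ of $R_2$ satisfies $|S|\le(\ell-1)|N_{R_2}(S)|$. This is the main obstacle, and it is where $\sigore(D_2)\ge(3/4+\mu/2)n_2$ enters, via the weights $w_i(D_2)$.

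If $|S|=1$, Lemma~\ref{lem:weightedore}(ii) applied to $D_2$ gives $w_i\ge\mu k'/4>0$, so $N_{R_2}(S)\neq\emptyset$ and the inequality holds.

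If $|S|\ge2$, take distinct $i,i'\in S$. Since $ii'\notin E(R_2)$, the pure digraph $D_2'$ has no arcs between $V_i'$ and $V_{i'}'$. Hence every $x\in V_i'$ has at most $2(d_a+2\varepsilon_a)n_2\ll m'$ neighbours in $V_{i'}'$ in $D_2$, so there are non-adjacent $x\in V_i'$, $x'\in V_{i'}'$. As in the proof of \eqref{eq:wlift}, each of $w_i,w_{i'}$ is at least $k'(d_{D_2}^*(\cdot)/n_2-d_a-2\varepsilon_a)$, evaluated at $x$ and at $x'$ respectively. The Ore condition applied to $x,x'$ therefore gives
\[
w_i+w_{i'}\ge\Bigl(\tfrac34+\tfrac\mu4\Bigr)k'.
\]
Because $S$ is independent, every $p_{(\cdot,\cdot)}$ contributing to $w_i$ involves a neighbour of $i$, so $w_i\le|N_{R_2}(S)|$, and likewise $w_{i'}\le|N_{R_2}(S)|$. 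Hence $|N_{R_2}(S)|\ge 3k'/8$. Since $S$ and $N_{R_2}(S)$ are disjoint,
\[
|S|\le k'-|N_{R_2}(S)|\le \tfrac58k'\le 2\cdot\tfrac38 k'\le(\ell-1)|N_{R_2}(S)|,
\]
using $\ell\ge3$. Some care is needed to make sure the error terms from regularity and from the deleted set $A\cup V(\mathcal F_E)$ are absorbed by $\mu$, exactly as in Lemmas~\ref{lem:weightedore} and~\ref{lem:connected}.

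\emph{Step 3 (integration).} By the layer-cake formula and Steps 1--2,
\[
\sum_{i:\,y_i<0}|y_i|=\int_0^\infty|S_t|\,dt
\le(\ell-1)\int_0^\infty|N_{R_2}(S_t)|\,dt
\le(\ell-1)\int_0^\infty|T_t|\,dt .
\]
The substitution $s=(\ell-1)t$ gives
\[
(\ell-1)\int_0^\infty|T_t|\,dt=\int_0^\infty\bigl|\{j:y_j>s\}\bigr|\,ds=\sum_{j:\,y_j>0}y_j .
\]
Hence $\mathbf y^\top\one_{k'}\ge0$, contradicting the second alternative. By Farkas' Lemma the first alternative holds, which yields the required nonnegative coefficients $\lambda_P$ with $\one_{k'}=\sum_{P\in\mathcal P_\ell(R_2)}\lambda_P P$.
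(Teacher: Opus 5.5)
\textbf{There is a genuine gap in Step 2, in the case $|S|\ge2$.} Your Farkas set-up (Step 1) and the layer-cake step (Step 3, using $N_{R_2}(S_t)\subseteq T_t$) are correct and essentially match the paper.

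First, you assert that every $x\in V_i'$ has at most $2(d_a+2\varepsilon_a)n_2\ll m'$ neighbours in $V_{i'}'$. The inequality actually goes the other way. We have $m'\le n_2/k'\le n_2/M_{0,a}$ and $1/M_{0,a}\ll\varepsilon_a\ll d_a$, so the per-vertex loss $(d_a+2\varepsilon_a)n_2$ from Lemma~\ref{lem:regularity}(iii) dwarfs a single cluster. The fact $ii'\notin E(R_2)$ only says that $D_2'$ has no arcs between $V_i'$ and $V_{i'}'$. The regularity lemma may have discarded every arc of that pair (e.g.\ because it was irregular). So $V_i'$ and $V_{i'}'$ can be completely joined in $UG(D_2)$, and the non-adjacent pair $x,x'$ need not exist.

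Second, the bound $w_i\ge k'(d^*_{D_2}(x)/n_2-d_a-2\varepsilon_a)$ is not valid at an arbitrary $x\in V_i'$. The inequality \eqref{eq:wlift} comes from averaging, $m'^2w_i=\sum_{x\in V_i'}d^{\pm}_{D_2'}(x)$, so it holds only with the minimum (or average) of $d^*$ over the cluster. A single vertex may have $d^*_{D_2}(x)$ as large as $|N_{R_2}(i)|m'+(d_a+2\varepsilon_a)n_2$, far above $w_im'$. Choosing $x,x'$ to be near-minimisers does not rescue the argument, since such vertices may all be adjacent across the two clusters.

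\textbf{How the paper avoids both problems.} It works with the whole union rather than two prescribed clusters, and uses a pointwise degree bound rather than $w_i$.
\begin{enumerate}
\item Assume $|S|>2|N_{R_2}(S)|$; this suffices since $\ell-1\ge2$.
\item Lemma~\ref{lem:weightedore}(ii) and $w_i\le|N_{R_2}(S)|$ give $|N_{R_2}(S)|\ge\mu k'/4$. With $X:=\bigcup_{i\in S}V_i'$ and $Y:=\bigcup_{j\in N_{R_2}(S)}V_j'$, this gives $|Y|\ge\mu n_2/5$ and $|X|>2|Y|$.
\item Now $X$ has linear size and $D_2'[X]$ is empty. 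Each vertex has at most $2(d_a+2\varepsilon_a)n_2\ll|X|$ neighbours in $X$, so some non-adjacent $x,y\in X$ exist, in whatever clusters.
\item Every vertex of $X$ satisfies $d^*_{D_2}(\cdot)\le|Y|+2(d_a+2\varepsilon_a)n_2$. This holds because all its $D_2'$-arcs in the dominant direction go into clusters indexed by $N_{R_2}(S)$.
\item The Ore condition applied to $x,y$ then forces $|Y|>n_2/3$, contradicting $|X|+|Y|\le n_2$.
\end{enumerate}
Equivalently, step 5 yields your bound $|N_{R_2}(S)|\ge 3k'/8$, because $|Y|\le|N_{R_2}(S)|\,n_2/k'$. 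After that, your counting $|S|\le k'-|N_{R_2}(S)|<2|N_{R_2}(S)|$ finishes the argument.
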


\begin{proof}
Suppose not. By Farkas' lemma there is $\mathbf y=(y_1,\dots,y_{k'})\in\mathbb R^{k'}$ such that $\sum_i y_i<0$ and, for every edge $ij\in E(R_2)$,
\[
y_i+(\ell-1)y_j\ge0,\qquad (\ell-1)y_i+y_j\ge0.\tag{5}\label{eq:farkas}
\]
Set $S=\{i\in [k']:y_i<0\}$ and $a_i=-y_i$ for $i\in S$. Then $S$ is independent.

We claim that every nonempty independent set $I\subseteq V(R_2)$ satisfies
$(\ell-1)|N_{R_2}(I)|\ge |I|$.
If $|I|=1$, this follows from connectivity of $R_2$ and $\ell-1\ge2$. Suppose that $|I|\ge2$ and, to the contrary, $|I|>2|N_{R_2}(I)|$. Put $q:=|N_{R_2}(I)|$,
$X:=\bigcup_{i\in I}V'_i$, $Y:=\bigcup_{j\in N_{R_2}(I)}V'_j$.
Since $I$ is independent, all contributing to $w_i$ for $i\in I$ lies in $N_{R_2}(I)$; hence $w_i\le q$. Lemma~\ref{lem:weightedore}(ii) gives
$q\ge\frac\mu4k'$, so, for $n$ large,
$|Y|=qm'\ge\frac\mu5n_2$ and  $|X|=|I|m'>2|Y|$.  Moreover $D'_2[X]$ is empty. Thus every $x\in X$ has at most $2(d+2\varepsilon)n_2$ neighbours in $X$ in $UG(D_2)$. Since $|X|>2\mu n_2/5$ and $d,\varepsilon\ll\mu$, there are two distinct non-adjacent vertices $x,y\in X$. If $x\in V'_i\subseteq P^+_2$, then all out-neighbours of $ i$ in $R^0_2$ lies in $N_{R_2}(I)$, and therefore
\[
d_{D_2}^*(x)=d_{D_2}^+(x)\le |Y|+2(d+2\varepsilon)n_2.
\]
The same bound follows from in-degrees when $V'_i\subseteq P^-_2$. Hence $\sigore(D_2)\ge(3/4+\mu/2)n_2$ yields
\[
\left(\frac34+\mu/2\right)n_2
\le d_{D_2}^*(x)+d_{D_2}^*(y)
\le2|Y|+4(d+2\varepsilon)n_2,
\]
and so
$|Y|\ge\left(\frac38+\frac\mu4-2d-4\varepsilon\right)n_2>\frac{n_2}{3}.$
But $|X|>2|Y|$, whence $|X|+|Y|>3|Y|>n_2$, a contradiction. Thus $2|N_{R_2}(I)|\ge|I|$, which implies $(\ell-1)|N_{R_2}(I)|\ge |I|$ because $\ell-1\ge2$.

From \eqref{eq:farkas}, each $j\in N_{R_2}(S)$ satisfies
$y_j\ge(\ell-1)\max_{i\in S\cap N_{R_2}(j)}a_i$.
For $t\ge0$ put $S_t=\{i\in S:a_i>t\}$. The layer-cake identity and $(\ell-1)|N_{R_2}(I)|\ge |I|$ give
\[
\sum_{j\in N_{R_2}(S)}y_j\ge(\ell-1)\int_0^\infty |N_{R_2}(S_t)|\,dt
\ge\int_0^\infty |S_t|\,dt
=\sum_{i\in S}a_i=-\sum_{i\in S}y_i.
\]
Hence, $\sum_{j\in N_{R_2}(S)}y_j
 +\sum_{i\in S}y_i\geq 0. $
Note that, for each $j\notin S$, there is $y_j\geq 0$, so $\sum_{i=1}^{k'} y_i\ge0$, a contradiction.
This completes the proof.
\end{proof}

\begin{theorem}[Almost covering]\label{lem:almost}
The digraph $D_2$ admits a $\Cvec$-tiling leaving at most $\xi n$ vertices uncovered.
\end{theorem}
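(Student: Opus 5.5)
We will turn the fractional decomposition of Lemma~\ref{lem:profilehall} into an integral tiling of the clusters of $D_2$, using the robust copies supplied by Lemma~\ref{lem:transferral}. The approach is the standard greedy allocation: assign each cluster a fraction of its vertices to each edge of $R_2$, then tile the resulting sub-pairs almost completely.

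\textbf{Step 1: set up the weights.} Work with the regularity partition $V_0',V_1',\dots,V_{k'}'$ of $D_2$ with parameters $(\varepsilon_a,d_a,M_{0,a})$, so $k'\le M_a$. By Lemma~\ref{lem:profilehall} we have $\one_{k'}=\sum_P\lambda_P P$ with $\lambda_P\ge0$ and $P\in\mathcal P_\ell(R_2)$. Since the coordinate sums give $\sum_P\lambda_P=k'/\ell$, every $\lambda_P\le k'$. For each edge $ij\in E(R_2)$, let $\lambda^{(1)}_{ij}$ and $\lambda^{(2)}_{ij}$ be the coefficients of $\mathbf e_i+(\ell-1)\mathbf e_j$ and $(\ell-1)\mathbf e_i+\mathbf e_j$.

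Round each coefficient down to a multiple of $\zeta/(2\ell M_a^2)$. Discard all terms whose rounded coefficient is below $\zeta/M_a^2$. The total coordinate loss in each cluster is then at most $\zeta/2$, so every cluster retains a $(1-\zeta/2)$-fraction of coverage.

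\textbf{Step 2: split the clusters.} For each edge $ij$ with a surviving term, carve disjoint subsets $V_i^{ij}\subseteq V_i'$ and $V_j^{ij}\subseteq V_j'$ with
\[
|V_i^{ij}|=(\lambda^{(1)}_{ij}+(\ell-1)\lambda^{(2)}_{ij})m',\qquad |V_j^{ij}|=((\ell-1)\lambda^{(1)}_{ij}+\lambda^{(2)}_{ij})m'.
\]
This is possible because the coefficient sums at each coordinate are at most $1$. Every surviving piece has size at least $\zeta m'/M_a^2$, which is at least $\zeta m'$ up to the choice of $M_a^\star$. By the Slicing Lemma, each pair $(V_i^{ij},V_j^{ij})$ is $(\widehat\varepsilon_a,d_a/2)$-regular. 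Each piece has size at least $n_2/M_a^\star$, because $m'\ge(1-\varepsilon_a)n_2/M_a$.

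\textbf{Step 3: tile each pair greedily.} Within each pair we greedily pick vertex-disjoint copies of $\Cvec$, with the two profile types in proportion $\lambda^{(1)}_{ij}:\lambda^{(2)}_{ij}$. Before every pick we apply Lemma~\ref{lem:transferral} to the two-part partition $(V_i^{ij},V_j^{ij})$ of $D_2[V_i^{ij}\cup V_j^{ij}]$, with parameters $(\widehat\varepsilon_a,d_a/2,M_a^\star)$. This is legitimate because $\alpha(D_2)\le2\gamma n_2\le\gamma_{2.2}n_2$. Declare a part exhausted once fewer than $\zeta|V_i^{ij}|/2$ of its vertices remain. While neither part is exhausted, the unused vertices still contain a regular subpair with parts of size at least $\zeta/2$ times the original, so Lemma~\ref{lem:transferral} still applies (after shrinking constants and passing to the subpair via slicing) and yields a copy with the required profile. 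Because the profiles are chosen in the prescribed ratio, both sides are consumed at the correct rates and are exhausted simultaneously up to $O(\zeta)$ error.

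\textbf{Step 4: count the uncovered vertices.} The vertices left uncovered are of four kinds:
\begin{itemize}
\item $V_0'$, at most $\varepsilon_a n_2$ vertices;
\item rounding losses, at most $\zeta n_2/2$;
\item leftovers inside the pairs, at most $\zeta n_2/2$;
\item an integrality loss of at most $\ell$ vertices per pair, hence $O(\ell M_a^2)$ in total, which is $o(n)$.
\end{itemize}
The total is below $2\zeta n\le\xi n$, since $\zeta\ll\xi$.

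\textbf{Main obstacle.} The delicate step is Step 3: keeping Lemma~\ref{lem:transferral} applicable throughout the greedy process while both parts shrink. The clean fix is to reapply the lemma at each step to the current remainder sets, using the slicing lemma with $\alpha=\zeta/2$; this is exactly why $\widehat\varepsilon_a=\max\{2\varepsilon_a,\varepsilon_a/\zeta\}$ and $M_a^\star$ were fixed in the parameter choice. Alternatively, one can note that robustness already tolerates deleting up to $\nu n$ vertices, and chop the process into $O(1/\nu)$ rounds.
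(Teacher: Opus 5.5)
Your proof has a gap in Step~2, and it carries over into Step~3. Carving each cluster $V_i'$ into one piece $V_i^{ij}$ per incident edge of $R_2$ gives pieces whose relative size $|V_i^{ij}|/m'$ can be as small as $\zeta/M_a^2$. This is not an artefact of your thresholds. $V_i'$ may meet up to $2(k'-1)$ terms with comparable coefficients, so typical pieces have relative size of order $1/k'\le 1/M_{0,a}\ll\varepsilon_a$. At that scale, $\varepsilon_a$-regularity of $(V_i',V_j')$ says nothing.

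The Slicing Lemma requires $\alpha\ge\varepsilon_a$. Applied formally with $\alpha=\zeta/M_a^2$, it returns $\varepsilon'=\varepsilon_aM_a^2/\zeta$, which exceeds $1$ because $M_a\ge M_{0,a}$ and $1/M_{0,a}\ll\varepsilon_a$. Regularity does not even rule out that $(V_i^{ij},V_j^{ij})$ spans no arcs. So your claim that these pairs are $(\widehat\varepsilon_a,d_a/2)$-regular is unfounded: $\widehat\varepsilon_a=\max\{2\varepsilon_a,\varepsilon_a/\zeta\}$ is what slicing gives for $\alpha=\zeta$. It does not hold for $\alpha=\zeta/M_a^2$, nor for the relative size $\zeta^2/(2M_a^2)$ you need in Step~3.

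Your remark ``up to the choice of $M_a^\star$'' would fix only the size hypothesis $|X|,|Y|\ge n/M$ of Lemma~\ref{lem:transferral}. It cannot fix the regularity hypothesis, since $M_a$ is forced by $\varepsilon_a$. Raising the discard threshold does not help either. With up to $2k'$ terms at a cluster, keeping the per-cluster loss below $\zeta/2$ forces a threshold of order $\zeta/(\ell k')$, again far below $\varepsilon_a$. Your fallback via robustness does not rescue this, because Lemma~\ref{lem:transferral} still needs a regular pair inside the pieces to start from.

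The fix, which is what the paper does, is to allocate by counting rather than by physically splitting clusters. Put $N_P:=\lfloor(1-\zeta)\lambda_Pm'\rfloor$, so that $\sum_Pc_i(P)N_P\le(1-\zeta)m'$ for every cluster $i$. Then embed the $N_P$ copies of each profile $P$ one at a time. Each time, apply Lemma~\ref{lem:transferral} to the currently unused parts $V_i'',V_j''$ of the \emph{whole} clusters $V_i',V_j'$ of the supporting edge. The counting guarantees $|V_i''|,|V_j''|\ge\zeta m'\ge n_2/M_a^\star$ throughout. Slicing with $\alpha=\zeta\ge\varepsilon_a$ then gives exactly $(\widehat\varepsilon_a,d_a/2)$-regularity, and no discarding of small coefficients is needed. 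At the end, at most $\zeta m'+O(k'^2)$ vertices per cluster, plus $V_0'$, remain uncovered.
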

\begin{proof}
By Lemma~\ref{lem:profilehall}, there are coefficients $\lambda_P\ge0$, for each  $P\in\mathcal P_\ell(R_2)$,  such that
\[
\one_{k'}=\sum_{P\in\mathcal P_\ell(R_2)}\lambda_PP.
\]
For $P\in\mathcal P_\ell(R_2)$, set $N_P=\lfloor(1-\zeta)\lambda_Pm'\rfloor$. For every cluster  $V'_i$,
\[
 (1-\zeta)m'\geq
\sum_{\mathcal P_\ell(R_2)} c_i(P)N_P\ge(1-\zeta)m'-O(k'^2),
\tag{7}\label{eq:rounding}
\]
where $c_i(P)\in \{0,1,\ell-1\}$ is the $i$-th coordinate of $P$.

We process each index vector $P$ one by one, embedding $N_P$ copies of $\tilde C$ satisfying this index vector. The inequality \eqref{eq:rounding} ensures that after each iteration, every cluster retains at least $\zeta m'$ unused vertices. Since $k'\le M_a$ and $|V'_0|\le\varepsilon_a n_2\le n_2/2$, we obtain
\[
m'=\frac{n_2-|V'_0|}{k'}\ge\frac{n_2}{2M_a},
\qquad
\zeta m'\ge\frac{n_2}{M_a^\star}.
\]
Let $ij\in E(R'_2)$ be the support edge corresponding to the current index vector. Then the unused subsets, say $V''_i$ and $V''_j$, of $V'_i$ and $V'_j$ are both of size at least $\zeta m'$. By Lemma~\ref{lem:slicing}, $(V''_i,V''_j)$ form a $(\widehat\varepsilon_a,d_a/2)$-regular pair: the regularity parameter is at most $\max\{2\varepsilon_a,\varepsilon_a/\zeta\}=\widehat\varepsilon_a$, and the density is at least $d_a-\varepsilon_a\ge d_a/2$. Since  $|V''_i|,|V''_j|\geq n_2/M_a^\star$ and $\alpha(D_2)\le2\gamma n_2$, then Lemma~\ref{lem:transferral} to be applied to $D_2$ with parameters $(\widehat\varepsilon_a,d_a/2,M_a^\star)$ and $W=\emptyset$. It embeds the required two disjoint $\Cvec$-copies in $V''_i\cup V''_j $. Hence the greedy procedure is valid until all prescribed copies have been embedded.

At the end, \eqref{eq:rounding} leaves at most $\zeta m'+O(k'^2)$ vertices in each regularity cluster, in addition to $V'_0$. Since $k'$ is bounded independently of $n$,
\[
|V(D_2)\setminus V(\mathcal T)|\le \varepsilon_a n_2+\zeta n_2+O(1)<\xi n
\]
for sufficiently large $n$.
\end{proof}

\subsection{Completion of the Proof}

\begin{proof}[Proof of Theorem~\ref{thm:main}]
Fix $\ell,\Cvec$ and $\mu>0$.  Choose a small constant $\eta>0$ with $\eta\ll\mu$.  {Apply Lemma~\ref{lem:pretil}} to obtain the exceptional $\Cvec$-tiling $\mathcal F_E$  with $|V(\mathcal F_E)|\leq 2\ell \eps n$, covering the bad set $E_{bad}$. Also, denote $D_1:=D-\mathcal F_E$ to be the remaining digraph.

By Corollary~\ref{cor:oreabsorber}, $D_1$ contains an absorbing set $A$ with
$ |A|\le\eta n$
such that $D_1[A\cup U]$ has a $\Cvec$-factor whenever
$ U\subseteq V(D_1)\setminus A$ with
$ |U|\le\xi n$ and  $\ell\mid |A\cup U|$.
Set
 $D_2:=D_1-A$, $ n_2:=|D_2|$, and set
$r:=n-n_2{ {(=|V(\mathcal F_E)|+|A|)}}$.  The choice $\eps$ and $\eta$ yields  $r\le\mu n/10$. Further,  {for every two distinct non-adjacent vertices $x,y\in V(D_2)$},
\begin{align*}
 d_{D_2}^*(x)+d_{D_2}^*(y)
\ge\left(\frac34+\mu\right)n-2r\ge\left(\frac34+\frac\mu2\right)n_2.
\end{align*}
Also $\alpha (D_2)\le\alpha (D)$. {Apply Theorem~\ref{lem:almost}} to obtain a $\Cvec$-tiling $\mathcal T$ in $D_2$ whose uncovered set
$U:=V(D_2)\setminus V(\mathcal T)$
satisfies $|U|\le\xi n$.
 {Since $\ell\mid n$ and both $|V(\mathcal F_E)|$ and $|V(\mathcal T)|$ are multiples of $\ell$, we have $\ell\mid |A\cup U|$.} According to the absorption property of $A$, $D[A\cup U]$ contains a $\Cvec$-factor.  Together with $\mathcal T$ and the initial pre-tiling $\mathcal F_E$, this is a $\Cvec$-factor of $D$, which completes the proof.
\end{proof}

\section{Asymptotic sharpness}

 {We finish by showing that $3/4$ is the optimal  coefficient over the class of non-consistently-directed cycle orientations covered by Theorem~\ref{thm:main}; the obstruction   occurs for anti-directed cycles.}
Before proceeding further, we present the following example.
\begin{lemma} \label{lem:host}
For every integer $m\ge2$ there is an $m$-vertex tournament $R_m$ such that
\[
 \ds_{R_m}(v)\ge F(m):=\left\lfloor\frac{3m-2}{4}\right\rfloor
 \qquad(\text{for each }v\in V(R_m)).
\]
\end{lemma}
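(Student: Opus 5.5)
The plan is a direct construction: split the vertex set into an ``out-heavy'' half and an ``in-heavy'' half, send every arc from the first to the second, and put an almost-regular tournament inside each half. In a tournament every vertex has $d^+(v)+d^-(v)=m-1$, so the trivial bound only gives about $m/2$. The extra $m/4$ must come from the bipartite part: out-degree for the first half, in-degree for the second.

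\textbf{Construction.} Write $V(R_m)=P\cup Q$ with $|P|=a:=\lceil m/2\rceil$ and $|Q|=b:=\lfloor m/2\rfloor$, and orient every $P$--$Q$ pair from $P$ to $Q$.

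Inside each part we use an almost-regular tournament. For a set of size $s\ge1$, identify it with $\mathbb Z_s$ and put $i\to i+j$ for $1\le j\le\lfloor (s-1)/2\rfloor$. When $s$ is even, the remaining antipodal pairs $\{i,i+s/2\}$ are oriented arbitrarily. This is a tournament in which every vertex has both out-degree and in-degree at least $\lfloor (s-1)/2\rfloor$. Take such a tournament on $P$ and on $Q$. For $m\ge2$ both parts are nonempty, and a part of size $1$ is trivially fine.

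\textbf{Degree bounds.} For $v\in P$ we get $\ds(v)\ge d^+(v)\ge b+\lfloor (a-1)/2\rfloor$. For $v\in Q$ we get $\ds(v)\ge d^-(v)\ge a+\lfloor (b-1)/2\rfloor$.

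\textbf{Check against $F(m)$.} It remains to verify that both bounds are at least $F(m)=\lfloor(3m-2)/4\rfloor$, which we do by the residue of $m$ modulo $4$. Write $m=4t+r$.
\begin{itemize}
\item $r=0$: $a=b=2t$, both bounds equal $3t-1=F(m)$.
\item $r=1$: $a=2t+1$, $b=2t$, both bounds equal $3t=F(m)$.
\item $r=2$: $a=b=2t+1$, both bounds equal $3t+1=F(m)$.
\item $r=3$: $a=2t+2$, $b=2t+1$, the bounds are $3t+1$ and $3t+2$, and $F(m)=3t+1$.
\end{itemize}
So in every case each vertex satisfies $\ds_{R_m}(v)\ge F(m)$.

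No real obstacle is expected. The only delicate point is the parity bookkeeping, namely choosing $|P|=\lceil m/2\rceil$ rather than $\lfloor m/2\rfloor$, so that the weaker side still meets $F(m)$ when $m\equiv 3\pmod 4$.
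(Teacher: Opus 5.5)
Your proof is correct and takes essentially the paper's approach: two near-regular tournaments on halves of sizes $\lfloor m/2\rfloor$ and $\lceil m/2\rceil$, with every cross arc oriented from one half to the other; your explicit circulant tournament and mod-$4$ check supply details the paper leaves implicit. One correction to your closing remark: it does not matter which half is the out-half, because a vertex in a half of size $s$ gets the size of the other half plus $\lfloor (s-1)/2\rfloor$ whichever way the cross arcs point, and indeed the paper orients them from the $\lfloor m/2\rfloor$-half to the $\lceil m/2\rceil$-half and obtains the same bounds.
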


\begin{proof}
For every $r\ge 1$, there exists an $r$-vertex tournament $J_r$ such that
\[
d^+_{J_r}(v),d^-_{J_r}(v)\ge\lfloor(r-1)/2\rfloor
\]
for every vertex $v$. To construct $J_r$, take a regular tournament when $r$ is odd, and delete one vertex from a regular tournament on $r+1$ vertices when $r$ is even.

Set $a=\lfloor m/2\rfloor$ and $b=\lceil m/2\rceil$. Let $J_a$  and $J_b$ be two tournaments as defined above, and orient every edge from $A$ to $B$ to obtain a desired tournament. Then
\[
 \ds(u)\ge b+\left\lfloor\frac{a-1}{2}\right\rfloor\quad(\text{for each }u\in A),\
 \ds(v)\ge a+\left\lfloor\frac{b-1}{2}\right\rfloor\quad(\text{for each }v\in B),
\]
as desired.
\end{proof}

Let $Q$ be an oriented graph, let $R$ be a tournament disjoint from $Q$, and add a vertex $z$. Define $D(z,Q,R)$ by keeping $Q$ and $R$, orienting
 $z\to Q\to R,$
and there is no arc between $z$ and $R$.

\begin{lemma} \label{lem:root}
For $D(z,Q,R)$, the following hold.
\begin{enumerate}
 \item If $\Delta^+(Q)\le1$, then there is no $C_4^{\ad}$ containing the vertex $z$.
 \item If $s\ge3$ and the matching number of the underlying graph of $Q$ is at most $1$, then there is no $C_{2s}^{\ad}$ containing the vertex $z$.
\end{enumerate}
\end{lemma}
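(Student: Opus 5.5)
The plan is a local analysis around $z$, using only the forced local structure of an anti-directed cycle. Every vertex of $C_{2s}^{\ad}$ is either a source or a sink: its two incident arcs both point away from it or both point towards it. In $D(z,Q,R)$ the vertex $z$ has no in-neighbours and no neighbours in $R$, so $N(z)=N^+(z)=V(Q)$. Hence in any copy $\Cvec$ of $C_{2s}^{\ad}$ through $z$, the vertex $z$ is a source. Its two cycle-neighbours $q_1,q_2$ lie in $Q$, and the arcs are $z\to q_1$ and $z\to q_2$.

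Since $z\to q_i$ and the orientation alternates, each $q_i$ is a sink of the copy. So the other cycle-neighbour $q_i'$ of $q_i$ satisfies $q_i'\to q_i$. The in-neighbours of a vertex of $Q$ lie in $Q\cup\{z\}$, because arcs between $Q$ and $R$ all go from $Q$ to $R$. Since $z$ already occupies the other position next to $q_i$, we get $q_i'\in V(Q)\setminus\{z\}$, and $q_i'\to q_i$ is an arc of $Q$. This observation drives both parts.

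For part 1 ($s=2$), the cycle is $z,q_1,w,q_2$, so $q_1'=q_2'=w\in V(Q)$. Then $w\to q_1$ and $w\to q_2$ are two arcs of $Q$ leaving $w$, so $d_Q^+(w)\ge2$. This contradicts $\Delta^+(Q)\le1$.

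For part 2 ($s\ge3$), the cycle has length $2s\ge6$. The vertices $z,q_1,q_1',q_2',q_2$ are five distinct vertices of the cycle, forming a path segment around $z$; they are distinct because if $q_1'=q_2'$ the cycle would close after four vertices. Thus $q_1q_1'$ and $q_2q_2'$ are two vertex-disjoint edges of the underlying graph of $Q$, giving matching number at least $2$, a contradiction.

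The only step needing care is the distinctness argument in part 2, together with justifying that $q_i'\neq z$ and $q_i'\notin R$. Both follow from the cycle being simple and from the orientation $Q\to R$, so I expect no real obstacle.
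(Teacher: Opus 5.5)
Your proposal is correct and takes essentially the same approach as the paper. In both arguments $z$ is forced to be a source, its two cycle-neighbours are sinks lying in $Q$, and their other cycle-neighbours must also lie in $Q$ because no arc goes from $R$ to $Q$; this yields either a vertex of out-degree at least $2$ in $Q$ (case $s=2$) or two disjoint edges of the underlying graph of $Q$ (case $s\ge3$). Your explicit check that the five vertices occupy distinct cycle positions $0,1,2,2s-2,2s-1$ when $2s\ge6$ makes precise a step the paper leaves implicit.
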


\begin{proof}
Since $d_D^-(z)=0$, the vertex $z$ must be a source in every anti-directed cycle containing it.
For $C_4^{\ad}$, its two sink-neighbours, say $z_1,z_2$, lie in $V(Q)$. So the common source, say $z'$, of $z_1$ and $z_2$ cannot lie in $R$, because there are no arcs from $R$ to $Q$. Hence  $z'\in V(Q)$ and $z'$ has two out-neighbours $z_1,z_2$ in $Q$, contradicting $\Delta^+(Q)\le1$.

Assume that $s\ge3$, write the sources as $x_1,\ldots,x_s$ and the sinks as $y_1,\ldots,y_s$, with
$x_i\to y_i$ and $x_{i+1}\to y_i$ (indices modulo $s$). If $x_1=z$, then $y_1,y_s\in V(Q)$. Analogously, their other source-neighbours $x_2,x_s$ also satisfy $x_2,x_s\in V(Q)$.
Thus
 $x_2\to y_1$ and $x_s\to y_s$
are two vertex-disjoint arcs of $Q$, a contradiction.
\end{proof}


\begin{proof}[Proof of Theorem \ref{thm:no-additive-intro}]
Let $Q$ be an oriented graph on $r$ vertices satisfying the conditions stated below. For all sufficiently large integers $n$ such that $2s\mid n$, set $m:=n-r-1$, let $R=R_m$ be the tournament given in Lemma~\ref{lem:host}, and define $D:=D(z,Q,R)$.
 Since $r$ is fixed, for all sufficiently large $n$ we have $r\le F(m)\le m$. Moreover
\[
 \ds_D(z)=r,\ \ds_D(u)\ge m\ (\text{for each }u\in Q),\
 \ds_D(v)\ge F(m)\ (\text{for each }v\in R).
\]
The five possible pair types $(z,Q),(z,R),(Q,Q),(Q,R),(R,R)$ show that
 $d_D^*(x)+d_D^*(y)\ge r+F(m)$ for every two distinct vertices $x,y$. Hence, in particular,
\begin{equation}\label{eq:master}
{\sigore(D)}\ge r+F(m) \ge r+\frac{3m-5}{4} =\frac34n+\frac14r-2.
\end{equation}
Moreover, since $Q\to R$, $z\to Q$, $z$ is nonadjacent to $R$, and $R$ is a tournament,
\begin{equation}\label{eq:alpha}
\alpha(D)=\max\{\alpha(Q),2\}.
\end{equation}

{Given the prescribed constant $C$, choose an integer $a\ge2$ so large that $\frac34a-2\ge C$ when $s=2$, and $\frac14a-\frac32\ge C$ when $s\ge3$.} Assume first that $s=2$.  Let $Q$ be the disjoint union of $a$ directed triangles. Then
$r=3a,$ $\alpha(Q)=a$,  $\Delta^+(Q)=1$.
By Lemma~\ref{lem:root}, the vertex $z$ is not contained in any $C_4^\text{ad}$, so $D$ admits no $C_4^\text{ad}$-factor. From \eqref{eq:alpha}, $\alpha(D)=a$, whereas \eqref{eq:master} gives
\[
 {\sigore(D)}\ge \frac34n+\frac34a-2.
\]

Next suppose $s\ge 3$.  Let $Q$ consist of one directed triangle together with $a-1$ isolated vertices. Then
$r=a+2$ and $\alpha(Q)=a$,
and $Q$ has matching number $1$. Lemma~\ref{lem:root} implies that $z$ belongs to no $C_{2s}^\text{ad}$, and hence no such factor exists. Again $\alpha(D)=a$, and \eqref{eq:master} produces
\[
 {\sigore(D)}\ge \frac34n+\frac14a-\frac32,
\]
as desired.
\end{proof}

\section{Conclusion and Remark}

In this paper, we establish Ore-type dominant-degree conditions in the Ramsey--Tur\'an setting for the existence of factors of arbitrary non-consistently-directed cycles in oriented graphs. We conclude with examples demonstrating that there exist orientations for which the Ore-type dominant-degree condition depends on the independence number yet the corresponding factor still fails to exist. It is therefore worthwhile to formulate analogous orientation-dependent coefficients for arbitrary non-consistently-directed cycles, so as to determine the optimal Ore-type dominant-degree conditions.

On the one hand, anti-directed cycle factors can serve as a natural starting point; namely, one can consider the following problem.

\begin{problem}\label{prob:main}
For each fixed $s\ge2$, determine the least constant $c_s$ for which there exists $K_s$ such that every sufficiently large $n$-vertex oriented graph $D$, with $2s\mid n$, satisfying
\[ \sigore(D) \ge\frac34n+c_s\alpha(D)+K_s
\]
has a $C_{2s}^{\ad}$-factor.
\end{problem}
Theorem~\ref{thm:no-additive-intro} gives
 $c_2\ge\frac34 $ and $c_s\ge\frac14\quad(s\ge3)$.

On the other hand, it would be interesting to establish analogous conclusions for general digraphs. Furthermore, one may investigate Ramsey--Tur\'an factor problems for arbitrary orientations of a fixed graph $H$.


\begin{thebibliography}{99}
\bibitem{AddarioBerry2013} L. Addario-Berry, F. Havet, C. Linhares Sales, B. Reed and S. Thomass\'e, \emph{Oriented trees in digraphs}, Discrete Math. \textbf{313} (2013), 967--974.
\bibitem{AlonShapira} N. Alon and A. Shapira, \emph{Testing subgraphs in directed graphs}, J. Comput. System Sci. \textbf{69} (2004), 354--382.
\bibitem{AlonYuster} N. Alon and R. Yuster, \emph{$H$-factors in dense graphs}, J. Combin. Theory Ser. B \textbf{66} (1996), 269--282.
\bibitem{BaloghLoMolla} J. Balogh, A. Lo and T. Molla, \emph{Transitive triangle tilings in oriented graphs}, J. Combin. Theory Ser. B \textbf{124} (2017), 64--87.
\bibitem{BaloghMollaSharifzadeh} J. Balogh, T. Molla and M. Sharifzadeh, \emph{Triangle factors of graphs without large independent sets and of weighted graphs}, Random Structures Algorithms \textbf{49} (2016), 669--693.
\bibitem{book} J. Bang-Jensen and G. Gutin, \emph{Digraphs: Theory, Algorithms and Applications}, 2nd ed., Springer, London, 2009.
\bibitem{ChangWeiYan2026} Y. Chang, S. Wei and J. Yan, \emph{An exact dominant degree condition for transitive tournament factors in digraphs}, arXiv:2608.10445, 2026.
\bibitem{ChenAntiDirected} M. Chen, \emph{Anti-directed cycle-factors in oriented graphs}, J. Graph Theory \textbf{113} (2026), 131--142.
\bibitem{ChenKouMaZhang2026} M. Chen, Y. Kou, X. Ma and S. Zhang, \emph{$TT_3$-factors in oriented graphs with low independence number}, Discrete Math. \textbf{349} (2026), 115229.
\bibitem{CHWY2024} M. Chen, J. Han, G. Wang and D. Yang, \emph{$H$-factors in graphs with small independence number}, J. Combin. Theory Ser. B \textbf{169} (2024), 373--405.
\bibitem{CHY2025} M. Chen, J. Han and D. Yang, \emph{Clique-factors in graphs with low $K_\ell$-independence number}, arXiv:2509.16851, 2025.
\bibitem{CKM2014} A. Czygrinow, H. A. Kierstead and T. Molla, \emph{On directed versions of the Corr\'adi--Hajnal corollary}, European J. Combin. \textbf{42} (2014), 1--14.
\bibitem{CorradiHajnal} K. Corr\'adi and A. Hajnal, \emph{On the maximal number of independent circuits in a graph}, Acta Math. Acad. Sci. Hungar. \textbf{14} (1963), 423--439.
\bibitem{DeBiasioLoMollaTreglown} L. DeBiasio, A. Lo, T. Molla and A. Treglown, \emph{Transitive tournament tilings in oriented graphs with large minimum total degree}, SIAM J. Discrete Math. \textbf{35} (2021), 250--266.
\bibitem{ErdosHajnalSosSzemeredi} P. Erd\H{o}s, A. Hajnal, V. T. S\'os and E. Szemer\'edi, \emph{More results on Ramsey--Tur\'an type problems}, Combinatorica \textbf{3} (1983), 69--81.
\bibitem{ErdosSos} P. Erd\H{o}s and V. T. S\'os, \emph{Some remarks on Ramsey's and Tur\'an's theorem}, in \emph{Combinatorial Theory and its Applications II}, North-Holland, 1970, 395--404.
\bibitem{farkas1902}
J. Farkas, \emph{\"Uber die Theorie der einfachen Ungleichungen},
{Journal f\"ur die reine und angewandte Mathematik},
\textbf{124} (1902),  1--27.

\bibitem{HajnalSzemeredi} A. Hajnal and E. Szemer\'edi, \emph{Proof of a conjecture of P. Erd\H{o}s}, in \emph{Combinatorial Theory and its Applications II}, North-Holland, 1970, 601--623.
\bibitem{HMWY2024} J. Han, P. Morris, G. Wang and D. Yang, \emph{A Ramsey--Tur\'an theory for tilings in graphs}, Random Structures Algorithms \textbf{64} (2024), 94--124.
\bibitem{HNY2026} X. He, X. Nie and D. Yang, \emph{Transversal tilings in $k$-partite graphs without large holes}, arXiv:2602.10578, 2026.
\bibitem{Kelly2011} L. Kelly, \emph{Arbitrary orientations of Hamilton cycles in oriented graphs}, Electron. J. Combin. \textbf{18} (2011), Paper 186, 25 pp.
\bibitem{KellyKuhnOsthus} L. Kelly, D. K\"uhn and D. Osthus, \emph{A Dirac-type result on Hamilton cycles in oriented graphs}, Combin. Probab. Comput. \textbf{17} (2008), 689--709.
\bibitem{KnierimSu} C. Knierim and P. Su, \emph{$K_r$-factors in graphs with low independence number}, J. Combin. Theory Ser. B \textbf{148} (2021), 60--83.
\bibitem{KomlosSimonovits} J. Koml\'os and M. Simonovits, \emph{Szemer\'edi's regularity lemma and its applications in graph theory}, in \emph{Combinatorics, Paul Erd\H{o}s is Eighty}, Vol. 2, Bolyai Soc. Math. Stud. \textbf{2}, 1996, 295--352.
\bibitem{KuhnOsthus} D. K\"uhn and D. Osthus, \emph{The minimum degree threshold for perfect graph packings}, Combinatorica \textbf{29} (2009), 65--107.
\bibitem{Lo2025} A. Lo, \emph{From finding a spanning subgraph $H$ to an $H$-factor}, arXiv:2509.18832, 2025.
\bibitem{LoMarkstrom} A. Lo and K. Markstr\"om, \emph{$F$-factors in hypergraphs via absorption}, Graphs Combin. \textbf{31} (2015), 679--712.
\bibitem{MollaTreglown2026} T. Molla and A. Treglown, \emph{Cycle tilings and $H$-factors in directed graphs}, arXiv:2602.13737, 2026.
\bibitem{SimonovitsSos} M. Simonovits and V. T. S\'os, \emph{Ramsey--Tur\'an theory}, Discrete Math. \textbf{229} (2001), 293--340.
\bibitem{Treglown} A. Treglown, \emph{On directed versions of the Hajnal--Szemer\'edi theorem}, Combin. Probab. Comput. \textbf{24} (2015), 873--928.
\bibitem{WWY2026} Z. Wang, Z. Wang and J. Yan, \emph{Ramsey--Tur\'an type problem for perfect transitive triangle tilings in digraphs}, arXiv:2606.14161, 2026.
\bibitem{Yuster2003} R. Yuster, \emph{Tiling transitive tournaments and their blow-ups}, Order \textbf{20} (2003), 121--133.
\bibitem{ZHOUGAO} J. Zhou and Y. Gao, \emph{Ramsey--Tur\'an anti-directed cycle factors in oriented graphs}, arXiv:2608.08591, 2026.
\end{thebibliography}
\end{document}